\newtcolorbox{tbox}[1][]{%
    breakable,
    enhanced,
    colframe=blue,
    coltitle=white,
    #1
}
\numberwithin{equation}{section}
\theoremstyle{plain}
\newtheorem{lemma}{Lemma}[section]
\newtheorem{theorem}[lemma]{Theorem}
\newtheorem{introthm}{Theorem}
\newtheorem{proposition}[lemma]{Proposition}
\newtheorem{corollary}[lemma]{Corollary}
\theoremstyle{definition}
\newtheorem{definition}[lemma]{Definition}
\newtheorem{question}[]{Question}
\newtheorem{property}[]{Property}
\newtheorem{propA}[]{Proposition}
\newtheorem{remark}[lemma]{Remark}
\newtheorem{notation}[lemma]{Notation}
\DeclareMathOperator{\vdim}{vdim}
\DeclareMathOperator{\edim}{edim}
\newcommand{\ls}{\mathcal{L}}
\newcommand{\LL}{{\mathcal L}}
\newcommand{\PP}{{\mathbb P}}
\newcommand{\ldim}{\operatorname{ldim}}
\begin{document}

\title[]{On linear systems with multiple points
on a rational normal curve}
\author{Antonio Laface}
\address{
Departamento de Matem\'atica \newline
Universidad de Concepci\'on \newline 
Casilla 160-C \newline
Concepci\'on, Chile}
\email{alaface@udec.cl}

\author{Elisa Postinghel}
\address{Dipartimento di Matematica\newline Universit\`a degli Studi di Trento \newline via Sommarive 14
I-38123 \newline Povo di Trento (TN), Italy}
\email{elisa.postinghel@unitn.it}

\author{Luis Jos\'e Santana S\'anchez}
\address{Department of Mathematical Sciences \newline 
Loughborough University,  \newline
Epinal Way,
Loughborough LE11 3TU, 
 \newline
 United Kingdom \newline
\underline{Current}:
Departamento de Matem\'aticas, Estad\'istica e I. O. \newline 
Universidad de La Laguna \newline
Apartado de Correos 456, 38200 \newline
San Cristóbal de La Laguna, Spain.}
\email{lsantans.@ull.edu.es}

\subjclass[2020]{Primary: 14C20. Secondary: 14J70, 14J17.}
\keywords{Linear systems; Fat points; Rational normal curves; Special effect varieties.} 

\thanks{The first author has been partially
supported by Proyecto FONDECYT Regular n. 1190777.
The second author is a members of INdAM-GNSAGA and she
was partially supported by the EPSRC grant EP/S004130/1.
The third author has been partially
supported by Proyecto FONDECYT Regular n. 1190777.}

\maketitle 

\begin{abstract}
We give a closed formula for the dimension of all linear systems in $\mathbb{P}^n$ with assigned multiplicity at arbitrary collections of points lying on a rational normal curve of degree $n$. 
In particular we give a purely geometric explanation of the speciality of these linear systems, which is due to the presence of certain subvarieties in the base locus: linear spans of points, secant varieties of the rational normal curve or joins between them.

 \end{abstract}

\section*{Introduction}

It is a classical question to compute the dimension of a linear system $\mathcal{L}=\mathcal{L}_{n,d}(m_1,\dots,m_s)$ of
hypersurfaces of degree $d$ in $\mathbb{P}^n$ passing through a collection
of $s$ points in general position with multiplicities
at least $m_1,\dots,m_s$. Aside from  some partial results, the question is open in general.
If we define the (affine)  {\em virtual dimension} $\vdim(\LL)$ to be the integer
\[
\vdim(\LL) =\binom{n+d}{n}-\sum_{i=1}^s\binom{n+m_i-1}{n},
\]
obtained by a parameter count, a linear system $\ls$ is said to be {\it non-special} if it has the (affine) expected dimension, 
which is $\edim(\ls) = \max(\vdim(\ls), 0)$, or {\it non-special} if $\dim(\ls)>\edim(\ls)$.

In general, computing the dimension of these linear systems is a challenging task. 
In order to classify the special ones, one has to understand first the base locus and in particular what are the subvarieties of $\PP^n$
 that induce speciality when contained with
multiplicity high enough in the base locus of $\ls$.  We shall refer to these varieties as \emph{special effect varieties}, using the terminology introduced in \cite{bocci}.

For points in general position, the only complete result is the celebrated Alexander-Hirschowitz Theorem (\cite{AlHi}, see also \cite{BO}, \cite{Po}), which provides a complete list of special systems in the case where all multiplicities equal $2$.
For higher multiplicities, recall that $\mathcal{L}_{n,d}(m_1,\dots,m_s)$
is {\em Cremona reduced} if $(n-1)d\geq m_1+\cdots+m_{n+1}$
and $m_1\geq\cdots\geq m_s$ and that any linear system can
be put in this form after applying a Cremona transformation.
The planar case, i.e. $n=2$, has been deeply investigated by many authors starting from the classic Italian school of Algebraic Geometry. A famous conjecture, often referred to as the Segre-Harbourne-Gimigliano-Hirschowitz Conjecture, 
says that a linear system 
is special if and only if its strict transform on the blow-up of $\PP^2$ at the points has a $(-1)$-curve at least doubly contained in its base locus
\cite{S,  Harbourne3, G, Hir2} (see also \cite{Ciliberto}), that is to say that $(-1)$-curves are the only special effect curves.
Equivalently the conjecture says that a Cremona reduced linear system
is non-special. For $n=3$ the Laface-Ugaglia conjecture \cite[Conjecture 4.1]{LU} says 
that special effect varieties for Cremona reduced linear 
systems are: lines through two points and quadrics
through nine points.
For the higher dimensional case, the contribution of linear spaces, spanned by sets of points in general position, was studied by the second author and others in \cite{BDPLinear}, \cite{DPvanishing}. In particular it is proved that in dimension $n$ the special effect subvarieties are all and only these linear cycles  when either the number of points is bounded above by $n+2$, or the number of points is arbitrarily large but the multiplicities are bounded  with respect to the degree, 
i.e. if $\sum_im_i\leq nd+\min\{n-s(d),s-n-2\}$, with $s(d)
:= \#\{m_i\, :\, d=m_i\}$.

In \cite{Conjecture} the case of $n+3$ points in linearly general position of $\PP^n$ was investigated in detail. Recalling the classical result, attributed to Castelnuovo or Veronese, that through $n+3$ points there is a unique rational normal curve of degree $n$,
the authors conjectured that in this case special effect varieties
are: linear spans of points, secant varieties of the rational normal curve, or joins between them. Moreover a conjectural formula for the dimension of any linear system $\mathcal{L}$
was proposed. 

For points in special configuration the dimensionality problem can become very wild and no general results are known. In this paper we consider the case of $s$ points $p_1, \ldots, p_s$ lying on a rational normal curve of degree $n$ of $\PP^n$, with $s\ge n+3$ arbitrarily large. 
We give a closed formula for the dimension of any linear system $\ls$ 
and we show that the special effect subvarieties are 
all and only: the linear spans of points $L_I = \langle p_i \rangle_{i\in I}\subsetneq\PP^n$, where $I\subset \{1, \ldots, s\}$, the rational normal curve $C$, its secant varieties $\sigma_t=\sigma_t(C)$ and all the joins between the above $J(L_I, \sigma_t)$. We denote by $r_{I,\sigma_t}$ the dimension of $J(L_I, \sigma_t)$ and by $k_C$ and $k_{I,\sigma_t}$ the expected multiplicities of containment  of $C$ and of $J(L_I, \sigma_t)$ respectively in the base locus of $\mathcal L$ (cf. Section \ref{special-effect}).
Our main theorem is the following, for a precise statement see Theorem \ref{ThmC}.

\begin{introthm}\label{ThmCintro}
Let $\mathcal L := \mathcal L_{n,d}(m_1,\dots,m_s)$ be a linear system with $s\geq n+3$ points lying on a rational
normal curve of degree $n$, with $m_1, \ldots, m_s \geq k_C$. Then 
\[
 \dim(\mathcal L) = \vdim(\mathcal L) + \sum (-1)^{|I|}F_t (n + k_{I, \sigma_t} - r_{I, \sigma_t} - 1, s, \varepsilon, n), 
\]
where the summation ranges over the list of special effects varieties, the functions $F_t$ are recursively defined as
$$ F_t(a, s, \varepsilon , n) := \binom{a}{n} + \sum_{i=1}^t \binom{s-n-4 + i}{i} \binom{a+i}{n} - \sum_{i=1}^t \binom{\varepsilon}{i}F_{t-i}(a,s,\varepsilon, n-i) $$
and $\varepsilon$ is a parameter depending on $n, d, m_1, \ldots, m_s$, defined in \eqref{kC with epsilon}. 
\end{introthm}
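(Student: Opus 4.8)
The plan is to prove the formula by a double induction on the dimension $n$ and on the secant order $t$, with the combinatorial recursion defining $F_t$ mirroring a geometric recursion obtained by projecting from a point of the rational normal curve $C$. The classical fact driving the induction is that projection $\pi_p\colon\PP^n\rmap\PP^{n-1}$ from a point $p\in C$ sends $C$ to a rational normal curve of degree $n-1$ and carries the secant varieties $\sigma_t$ and the linear spans $L_I$ to the corresponding objects attached to the projected points. First I would invoke the results of Section~\ref{special-effect} to pin down the base scheme of $\LL$: its support is exactly the union of the special effect varieties $J(L_I,\sigma_t)$, each occurring with the expected multiplicity $k_{I,\sigma_t}$ (and $C$ itself with multiplicity $k_C$), so that $\varepsilon$ records the residual vanishing forced along $C$ after the point conditions have been imposed. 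With this description in hand the theorem becomes a statement that the contribution of each stratum to $\dim\LL$ is measured by the corresponding value of $F_t$.

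With the base locus understood, the core computation is to peel off these fixed components and show that the residual system is non-special. The main tool is projection from one of the points $p_i\in C$, which lowers $n$ by one, relates the secant structure of $C$ to that of its image, and thereby lets the inductive hypothesis in dimension $n-1$ produce the terms $F_{t-i}(a,s,\varepsilon,n-i)$. To compare the dimensions on $\PP^n$ and $\PP^{n-1}$ I would use the restriction exact sequence
\[
0 \longrightarrow \LL(-H) \longrightarrow \LL \longrightarrow \LL|_H \longrightarrow 0
\]
attached to the hyperplane $H$ through which we project, iterating it across the infinitesimal neighbourhoods of the centre. The number of layers interacting nontrivially with the $t$-th secant structure is governed by $\varepsilon$, producing the coefficients $\binom{\varepsilon}{i}$ in the last sum, while the middle binomials $\binom{s-n-4+i}{i}$ count the contribution of the points beyond the $n+3$ that already determine $C$, and the leading term $\binom{a}{n}$ records the rank of the system cut out on the join. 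Verifying that these geometric contributions assemble exactly into the recursion for $F_t$ is the heart of the inductive step.

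The alternating signs $(-1)^{|I|}$ come from an inclusion--exclusion over the poset of linear spans $L_I$: distinct joins $J(L_I,\sigma_t)$ share common components along their intersections, so the naive sum of their contributions overcounts, and Möbius inversion on the lattice of the spans $\langle p_i\rangle_{i\in I}$ corrects this to the stated alternating sum. The base cases anchoring the induction are $n=2$, where the formula should reduce to the Segre--Harbourne--Gimigliano--Hirschowitz count of $(-1)$-curve contributions, and $s=n+3$, where $\binom{s-n-4+i}{i}=\binom{i-1}{i}=0$ for $i\ge 1$ collapses the middle sum and recovers the formula conjectured in \cite{Conjecture}.

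I expect the main obstacle to be the verification that, after removing every special effect variety with its expected multiplicity, the residual linear system is genuinely non-special---that is, that the list of special effect varieties is complete and that no higher-order contact along the singular secant varieties produces further unexpected speciality. Concretely, $\sigma_t$ is singular along $\sigma_{t-1}$, so controlling the scheme structure of $\Bs(\LL)$ requires understanding the symbolic powers of the ideals of the joins $J(L_I,\sigma_t)$ and checking that the multiplicity of containment is exactly $k_{I,\sigma_t}$ and no larger. Reconciling this scheme-theoretic count with the closed combinatorial recursion for $F_t$, uniformly across all the overlapping strata and simultaneously with the inclusion--exclusion bookkeeping, is the crux of the argument.
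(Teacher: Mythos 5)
Your proposal has a genuine gap at exactly the point you yourself flag as the crux. Your first two steps require knowing that the base locus of $\LL$ is \emph{exactly} the union of the joins $J(L_I,\sigma_t)$, each occurring with multiplicity \emph{exactly} $k_{I,\sigma_t}$, and that the residual system obtained by peeling these off is non-special. But Section \ref{special-effect} provides only \emph{lower} bounds on the multiplicities of containment (Lemma \ref{boundkc} and Corollary \ref{cor-multjoins}); exactness of the multiplicities and completeness of the list of special effect varieties are consequences of Theorem \ref{ThmC}, not inputs available in advance, so the argument as proposed is circular. In the paper this gap is closed not by analysing the base scheme at all, but by the Castravet--Tevelev restriction to the \emph{exceptional} divisor $E_1$: the identity \eqref{indstep}, $h^0(X_s^n,D)-h^0(X_s^n,D-E_1)=h^0(X_s^{n-1}, l(D)-k_C^+E_q)$, whose essential content (\cite[Proposition 4.18]{CT}) is that the image of the restriction map is cut out by additional vanishing of order $k_C^+$ at the point $q$ where the tangent line to $C$ at $p_1$ meets the target hyperplane. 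Your proposed sequence $0\to\LL(-H)\to\LL\to\LL|_H\to0$ for a hyperplane $H$ cannot play this role: the points $p_i$ lie off a general hyperplane, so $\LL|_H$ carries no multiple-point conditions supported on a rational normal curve of degree $n-1$, and iterating over infinitesimal neighbourhoods of a centre of projection never produces the correction term at $q$. The projection-from-a-point geometry you invoke is the right intuition, but it is realized on $E_1\cong\PP^{n-1}$, where the restricted system has degree $m_1$ and multiplicities $m_1+m_i-d$ at the projected points, not on a hyperplane section of degree $d$.

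There are also two problems with your induction scheme. First, you anchor one base case at $s=n+3$ by appeal to the formula of \cite{Conjecture}; that formula was only conjectural, and the present theorem is precisely what establishes it, so this anchor is again circular. The paper's base case in the number of points is $s\le n+2$, where linear non-speciality is known from \cite{BDPLinear}. (Similarly, your $n=2$ anchor cannot rest on Segre--Harbourne--Gimigliano--Hirschowitz, which is a conjecture about general points; the paper proves the planar case for points on a conic directly, by reducing any non-redundant effective divisor to a nef one and applying Harbourne's vanishing lemma.) Second, a double induction on $(n,t)$ is too coarse to make the recursion for $F_t$ close up: the paper runs a simultaneous induction, backward on $\varepsilon$ and forward on $n$, $k_C^+$ and $s$, organised around four separately proved cases (the planar case; $k_C^+=0$; $\varepsilon=s-n-3$; the homogeneous case $m_1=\cdots=m_s=k_C$) and glued together by the five arithmetic identities of $F_t$ proved in Appendix \ref{Fiveproperties}. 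Nothing in your outline supplies a replacement for this machinery, and the inclusion--exclusion heuristic for the signs $(-1)^{|I|}$ does not by itself generate the $\binom{\varepsilon}{i}$ and $\binom{s-n-4+i}{i}$ coefficients that the inductive bookkeeping must reproduce.
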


Thanks to the simple observation that if $m_i < k_C$ for some $i$, then $\dim (\mathcal L) = \dim (\mathcal L_{n,d}(m_1, \ldots, \check m_i, \ldots , m_s))$, Theorem \ref{ThmCintro} completely solves the dimensionality probem. This in particular establishes the conjecture of \cite[Conjecture 6.4]{Conjecture} and extends its statament to the case of an arbitrary number of points.

This special configuration of points has been widely studied also in the Commutative Algebra framework.  A conjectural algorithm that computes the Hilbert function of fat point schemes with support on a rational normal curve of degree $n$ in $\PP^n$ was proposed by Catalisano, Elli\'a and Gimigliano in \cite[Conjecture C]{CEG}. Since one can show that computing such Hilbert 
function is equivalent to solving the dimensionality problem for  $\ls=\mathcal L_{n,d}(m_1, \ldots, m_s)$, their work implies a conjectural algorithm for computing $\dim(\ls)$. Moreover, Catalisano, Trung and Valla in \cite[Proposition 7]{CTV} gave an explicit formula for the \textit{regularity index} of the ideal sheaf of a fat point scheme supported on a rational normal curve.  In terms of linear systems, this translates into finding  the minimum  $\delta \in \mathbb N$ for which $\mathcal L_{n,d}(m_1, \ldots, m_s)$ is non-special for every $d\geq \delta$. 

More recently, in \cite[Problem 6.4]{HM}, H\`a and Mantero raised the question as to whether one could obtain an Alexander-Hirschowitz type theorem for points on a rational normal curve of degree $n$ of $\PP^n$, that is to say, to provide a formula for $\dim(\ls)$, when  $m_1=\cdots =m_s=2$. Our Theorem \ref{ThmCintro} answers this question.


Linear systems $\mathcal L_{n,d}(m_1, \ldots, m_s)$  with points on a rational normal curve $C$ of degree $n$ are interesting also from the birational geometric perspective.
The blow-up of the projective space at $s$ points lying on $C$, that we denote by $X^n_s$, is a Mori dream space, i.e. its Cox ring is finitely generated as shown in \cite[Theorem 1.2]{CT}. A list of generators includes the $s$ exceptional divisors and the strict transforms of all joins $J(L_I,\sigma_t)\subset\PP^n$ of codimension $1$. In order to prove their result, Castravet and Tevelev used certain restriction sequences of the space of global sections of a Cartier divisor on $X^n_s$ to an exceptional divisor, that can be identified with the space $X^{n-1}_s$, and they showed that Cox ring generators of $X^{n-1}_s$ lift to Cox ring generators of $X^{n}_s$, cf. Section \ref{CT construction}. We observe that the latter in particular establishes the conjectural algebraic algorithm of \cite{CEG} via a completely skew approach.
This will be the key tool in the inductive procedure that we will adopt to prove our main theorem, Theorem \ref{ThmCintro}. Hence, besides its original motivation in the setting of linear systems, our work has the twofold purpose to conclude an investigation started by the algebraists concerning fat point schemes supported on $C$ and to give it a new geometric perspective in the language of Mori dream spaces. It also gives a new geometric interpretation to results of \cite{CTV} which, in our language, reads as: the regularity index of a collection of points of multiplicity $m_1,\dots,m_s$ on $C$ is the smallest integer $\delta$ such that $\mathcal L_{n,d}(m_1, \ldots, m_s)$ has no special effect subvarieties, for every $d\ge \delta$.

Special effect varieties for points on a rational normal curve shed light on further birational properties of the Mori dream space $X^n_s$.
We recall that if $s\le n+1$, the latter is a toric variety, while if $s=n+3$ then  $X^n_{n+3}$ has a nice interpretation as the moduli space of certain rank-$2$ parabolic vector bundles over a $(n+3)$-pointed line $\PP^1$, see \cite{Bau91}, \cite{Mukai05} for details. Moreover, using this interpretation, Mukai  showed that the Mori chamber decomposition of the effective and movable cones of divisors of $X^n_{n+3}$ is induced by a hyperplane arrangement in the N\'eron-Severi space and he described the equations in the $n+4$ variables corresponding to a basis of the Picard group, see also \cite[Section 3]{AM}. Since such equations correspond to setting to zero the linear formulas for the multiplicities of containment of the special effect varieties in the stable base locus of effective divisors computed in \cite[Lemma 4.1]{Conjecture}, we can say that the  birational geometry of $X^n_{n+3}$ is governed by the special effect subvarieties. More precisely, the fixed divisors among the generators of the effective cone, besides the exceptional divisors, are all and only the strict transforms of the special effect hypersurfaces of $\PP^n$. Furthermore, the small $\mathbb{Q}$-factorial modifications of $X^n_{n+3}$ are compositions of  flips of strict transforms of special effect subvarieties of higher codimension and the base loci of any effective divisor is stable. In particular the Mori chamber decomposition of the effective cone of $X^n_{n+3}$ is induced by the stable base locus in the sense that all divisors that are ample on a small modification of $X^n_{n+3}$  have the same base locus support in $X$. Another way to say it is: the Mori chamber decomposition and the stable base locus decomposition of the effective cone coincide, see \cite[Chapter 2]{BCP} for details. Since these properties hold for $s=n+3$, they also hold for $s\le n+2$. In particular in the toric case, $s\le n+1$, the special effect varieties are all and only the torus invariant subvarieties that are all linear cycles. 
 For $s>n+3$ the effective cone is generated by the special effect divisors and by the exceptional divisors, as shown in \cite[Theorem 1.2]{CT}. The following question arise naturally.
\begin{question}
Let $X$ be the blow-up of $\PP^n$ at $s$ points lying on a rational normal curve of degree $n$.  What is the nef cone of $X$? Do the set of special effect varieties coincide with the set of flippable cycles? In particular, do the Mori chamber decomposition and the stable base locus decomposition of $X$ coincide? \end{question}

The article is organised as follows.
In Section \ref{preliminaries} we give a description of the subvarieties of $\PP^n$ that are special effect for linear systems $\ls_{n,d}(m_1,\dots,m_s)$ for $s$ points lying on a rational normal curve of degree $n$, and we recall the description of the Cox ring of the blow-up $X^n_s$ in terms of its generators.

In Section \ref{MainTheorem} we give the statement of the main theorem, Theorem \ref{ThmC}, which gives a formula for the dimension of the space of global sections of all effective divisors on $X^n_s$ or, equivalently, the solution to the dimensionality problem for linear systems  $\ls_{n,d}(m_1,\dots,m_s)$. 
Moreover the formula for the dimensionality is implemented in Magma \cite{Magma}, see Section \ref{magma-section}.

Section \ref{ProofMainTheorem} contains a number of intermediate results (Propositions  \ref{ThmdimP2}-\ref{propB}-\ref{propC}-\ref{propD}) on which the proof of the main result is based via an intricate induction argument.

Finally, appendix \ref{Fiveproperties} contains the proof of a number of technical properties used in the main arguments.


\section{Preliminaries}\label{preliminaries} Throughout this paper, we will work over the field of complex numbers. 
Let $C\subset \PP^n$ be a rational normal curve of degree $n$ and let  $p_1,\dots, p_s\in C$ be $s$ different points, with $s\in\mathbb{N}$ an arbitrary number.
In this section we introduce our notation and we will set up the main techniques.

\subsection{Blow-ups and divisors}
Let $\pi : X^n_s \rightarrow \mathbb P^n$ denote the blow-up of $\mathbb P^n$ at $p_1, \ldots, p_s$ with exceptional divisors $E_1,\dots, E_s$. The latter, together with the class $H$ pull-back of a generic hyperplane of $\PP^n$, generate the Picard group of $X^n_s$.
 Then, the linear system  $\mathcal L = \mathcal L_{n,d}(m_1, \ldots, m_s)$ pulls back via $\pi$ to the complete linear system of the following divisor on $X^n_s$:
\begin{equation}\label{D general form}  D:= dH - \sum_{i=1}^s m_i E_i. \end{equation}
In particular the following identities hold
\begin{align*}
\dim(\LL)&=h^0(X^n_s, \mathcal O_{X_s^n}(D)),\\
\vdim(\LL)&=h^0(X^n_s, \mathcal O_{X_s^n}(D))-h^1(X^n_s, \mathcal O_{X_s^n}(D)),
\end{align*}
where  we used the abbreviations  $h^i(X^n_s, \mathcal O_{X_s^n}(D))=\dim H^i(X^n_s, \mathcal O_{X_s^n}(D))$.

\subsection{Special effect subvarieties}\label{special-effect}
For any subset $I\subseteq \{1, \ldots, s\}$, let $L_I$ describe the linear subspace of $\PP^n$ spanned by the points indexed by $I$. Since the points are in linearly general position, then $\dim (L_I)=|I|-1$ if $|I|\le n$ and $L_I=\PP^n$ otherwise.

For every  $t\geq 1$ we denote with $\sigma_t:=\sigma_t(C)\subseteq \mathbb{P}^n$ the $t$-\emph{secant variety} $C$, that is the Zariski closure of the union of all linear spaces spanned by $t$ points of C, where, in particular, $\sigma_1 = C$. We have that $\dim\sigma_t(C)=2t-1$ if $2t-1 < n $ and $\sigma_t(C)=\PP^n$ otherwise.

Finally, we denote by 
$J(L_I, \sigma_t)$
the join of the linear cycle $L_I$ and the secant variety $\sigma_t$, that is the union of all the lines spanned by a point on $L_I$ and a point on $\sigma_t$. For instance $J(p_i,C)=J(L_{\{i\}},C)$ is a pointed cone over $C$. We adopt the  conventions: 
$ J(L_I, \sigma_0) = L_I  $ and  $J(L_\emptyset,\sigma_t) =\sigma_t.$ 

The dimension of all subvarieties $J(L_I, \sigma_t)$ is computed by the following formula:
\begin{equation}\label{dim-cones}r_{I,\sigma_t} := \dim(J(L_I, \sigma_t)) = |I| + 2t-1. \end{equation}
Abusing notation, we will denote by $J(L_I, \sigma_t)$ the subvarieties of $\PP^n$ as well as their strict transforms in $X_s^n$.

In what follows, we will compute a lower bound for the multiplicity of containment of each subvariety $J(L_I, \sigma_t)$ in the base locus of an effective divisor $D$.

\begin{notation}
For any integer $k\in \mathbb Z$, we let $k^+ := \text{max}(k, 0)$.
\end{notation}

\begin{lemma}(\cite[Lemma 4.2]{CT}) \label{boundkc}  Let $D=dH-\sum_{i=1}^s m_i E_i$ be and effective divisor on $X_s^n$. Then, the rational normal curve $C$ is contained in the base locus of $|D|$ with multiplicity at least $k_C^+$, where 
\begin{equation}\label{multC} k_C=k_C(D) :=  \left\lceil \frac{\sum_{i=1}^s m_i - nd}{s-n-2} \right\rceil. \end{equation}
\end{lemma}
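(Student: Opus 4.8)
The plan is to bound the order of vanishing along $C$ of an arbitrary section of $\Osh_{X^n_s}(D)$, equivalently of a degree-$d$ hypersurface $F\subset\PP^n$ with $\operatorname{mult}_{p_i}F\ge m_i$, by restricting to $C$ and inducting on the multiplicity of containment. The starting point is the elementary observation that $C\cong\PP^1$ is embedded by $\Osh_{\PP^1}(n)$, so that $\Osh_{\PP^n}(d)|_C\cong\Osh_{\PP^1}(nd)$; since $C$ is smooth and passes through each $p_i$, the restriction $F|_C$ vanishes at $p_i$ to order at least $m_i$. Hence if $F|_C\not\equiv 0$ then $\sum_i m_i\le nd$, and contrapositively $\sum_i m_i>nd$ forces $C\subset F$. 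This settles the base case, i.e.\ containment with multiplicity one.

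To upgrade this to arbitrary multiplicity I would filter by powers of the ideal sheaf $\Ish_C$ and analyse the leading term of a section along $C$. The rational normal curve has balanced normal bundle $N_{C/\PP^n}\cong\bigoplus_{i=1}^{n-1}\Osh_{\PP^1}(n+2)$, so the graded pieces of the filtration are
\[
\Ish_C^{\,j}/\Ish_C^{\,j+1}\cong\Sym^j N^*_{C/\PP^n}\cong\bigoplus\Osh_{\PP^1}\bigl(-j(n+2)\bigr),
\]
a sum of $\binom{n-2+j}{n-2}$ line bundles. Assume inductively that every section vanishes along $C$ to order at least $j$. A section vanishing to order exactly $j$ has a nonzero leading term in
\[
H^0\!\Bigl(C,\ \Sym^j N^*_{C/\PP^n}\otimes\Osh_{\PP^n}(d)|_C\Bigr)=\bigoplus H^0\bigl(\PP^1,\Osh_{\PP^1}(nd-j(n+2))\bigr),
\]
and the multiplicity condition at $p_i$ forces this leading term to vanish at $p_i$ to order at least $(m_i-j)^+$.

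The numerical heart of the argument is that such a nonzero leading term can exist only if one summand has enough degree to carry the imposed vanishing, that is only if $nd-j(n+2)\ge\sum_i(m_i-j)^+$. When $j<k_C$ and the multiplicities in play satisfy $m_i\ge j$, this reads $nd-j(n+2)\ge\sum_i m_i-sj$, which rearranges to $j(s-n-2)\ge\sum_i m_i-nd$, i.e.\ $j\ge k_C$, a contradiction. Thus no section vanishes to order exactly $j<k_C$, and the induction yields containment of $C$ with multiplicity at least $k_C$, hence at least $k_C^+$ (the case $k_C\le 0$ being trivial).

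I expect the main obstacle to be the infinitesimal bookkeeping in the inductive step: rigorously identifying the leading term inside $\Sym^j N^*_{C/\PP^n}\otimes\Osh(d)|_C$ and proving that $\operatorname{mult}_{p_i}F\ge m_i$ forces it to vanish at $p_i$ to order exactly $(m_i-j)^+$ rather than something weaker. A secondary technical point is the truncation $(m_i-j)^+$: at points with $m_i<j$ the inequality above is weakened, so one must either invoke the reduction recorded in the introduction, namely that points with $m_i<k_C$ may be discarded without changing the dimension—so that one may assume every relevant $m_i\ge k_C$ and the truncation never activates for $0\le j<k_C$—or carry out a finer local analysis. The balanced splitting of $N_{C/\PP^n}$ and the resulting description of $\Sym^j N^*_{C/\PP^n}$ are standard and can be cited.
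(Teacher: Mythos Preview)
The paper does not supply its own proof of this lemma; it merely cites \cite[Lemma~4.2]{CT}. Your normal--bundle argument is the standard one and is essentially what Castravet and Tevelev do, so you are on the right track, and the infinitesimal bookkeeping you flag as the ``main obstacle'' is indeed routine: in local coordinates $(y_1,\dots,y_{n-1},t)$ with $C=\{y_1=\cdots=y_{n-1}=0\}$ and $p_i$ at $t=0$, the monomials $y^\alpha t^k$ with $|\alpha|=j$ cannot be cancelled by terms of higher normal order, so each coefficient $g_\alpha(t)$ of the $j$-th jet must vanish at $p_i$ to order $\ge(m_i-j)^+$. Since the global splitting of $\Sym^j N^*_{C/\PP^n}$ is just a change of frame, each line-bundle component inherits that order of vanishing.

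Your ``secondary technical point'' is where you create an unnecessary difficulty. The truncation $(m_i-j)^+$ only \emph{helps}: one has
\[
\sum_{i=1}^s(m_i-j)^+\ \ge\ \sum_{i=1}^s(m_i-j)\ =\ \sum_{i=1}^s m_i - sj,
\]
so for any $j<k_C$ (equivalently $j(s-n-2)<\sum_i m_i-nd$, i.e.\ $nd-j(n+2)<\sum_i m_i-sj$) the strict inequality $nd-j(n+2)<\sum_i(m_i-j)^+$ follows immediately, with no assumption that every $m_i\ge j$. Thus your contradiction goes through verbatim with the truncated quantities, and there is no need to appeal to the redundancy reduction. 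That appeal would in fact be circular: Remark~\ref{redundant} is stated as a consequence of Lemma~\ref{boundkc}, so it cannot be used in its proof. Simply delete the proposed reduction step and observe the one-line inequality above.
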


\begin{corollary}\label{cor-multjoins}
In the same notation as Lemma  \ref{boundkc}, the subvariety $J(L_I, \sigma_t)$ is contained in the base locus of $|D|$ with multiplicity at least $k_{I,\sigma_t}^+$, where
\begin{equation} \label{multjoins} k_{I,\sigma_t}=k_{I,\sigma_t}(D) := \sum_{i\in I} m_i + tk_C - (t+|I|-1)d. \end{equation}
\end{corollary}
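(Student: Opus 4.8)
The plan is to reduce the statement to the (classical) multiplicity bound for a single linear cycle, by exhibiting $J(L_I,\sigma_t)$ as a union of linear spaces spanned by the $p_i$ together with auxiliary points of $C$. First I would use that $\sigma_t$ is the closure of the union of the spans $\langle q_1,\dots,q_t\rangle$ of $t$ points of $C$, so that
\[
J(L_I,\sigma_t)=\overline{\textstyle\bigcup_{q_1,\dots,q_t\in C}\Lambda(q_1,\dots,q_t)},\qquad \Lambda(q_1,\dots,q_t):=\langle p_i\ (i\in I),\,q_1,\dots,q_t\rangle .
\]
For general $q_1,\dots,q_t$ the $|I|+t$ spanning points are distinct points of $C$, hence in linearly general position, so $\dim\Lambda(q_1,\dots,q_t)=|I|+t-1$ as long as $|I|+t\le n+1$; the incidence count $t+(|I|+t-1)=|I|+2t-1=r_{I,\sigma_t}$ confirms that these linear spaces genuinely sweep out the join.

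The key input I would recall, in the spirit of \cite{BDPLinear,DPvanishing}, is the following \emph{linear cycle bound}: if $\Lambda\subset\PP^n$ is spanned by $N$ points in linearly general position at which a hypersurface $F$ of degree $d$ has multiplicities at least $\mu_1,\dots,\mu_N$, then $F$ vanishes along $\Lambda$ to order at least $\big(\sum_j\mu_j-(N-1)d\big)^+$. I would prove the base case $N=2$ by a Bézout argument: restricting $F$ to a general plane $\Pi\supset\Lambda$ and writing $F|_\Pi=\ell^{\,a}G$ with $\ell=\Lambda$, $\ell\not\subset G$ and $a=\operatorname{mult}_\Lambda F$, the binary form $G|_\ell$ of degree $d-a$ has zeros of order at least $\mu_1-a$ and $\mu_2-a$ at the two spanning points, which forces $a\ge\mu_1+\mu_2-d$; the general case then follows by induction on $\dim\Lambda$.

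With these two ingredients the corollary follows directly. By Lemma \ref{boundkc} every $F\in|D|$ vanishes along $C$ to order at least $k_C^+$, so $\operatorname{mult}_{q}F\ge k_C^+$ for every $q\in C$, while $\operatorname{mult}_{p_i}F\ge m_i$ by definition of $\LL$. Applying the linear cycle bound to each $\Lambda(q_1,\dots,q_t)$, whose $|I|+t$ spanning points carry multiplicities $m_i$ $(i\in I)$ and $k_C^+$, gives that $F$ vanishes along $\Lambda$ to order at least
\[
\Big(\sum_{i\in I}m_i+t\,k_C^+-(|I|+t-1)d\Big)^{+}\ \ge\ k_{I,\sigma_t}^{+},
\]
the last inequality holding because $k_C^+\ge k_C$ and $|I|+t-1=t+|I|-1$. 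Finally, by upper semicontinuity of the multiplicity one has $\operatorname{mult}_qF\ge\operatorname{mult}_\Lambda F\ge k_{I,\sigma_t}^+$ for every point $q\in\Lambda$; since the locus $\{\,q:\operatorname{mult}_qF\ge k_{I,\sigma_t}^+\,\}$ is closed and contains all the $\Lambda(q_1,\dots,q_t)$, it contains their closure $J(L_I,\sigma_t)$, which is exactly the asserted containment in the base locus.

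The step I expect to require the most care is the linear cycle bound with arbitrary mixed multiplicities at the spanning points, together with the verification that the spaces $\Lambda(q_1,\dots,q_t)$ really cover the join; both are elementary once one uses that the spanning points lie on $C$ and are therefore in linearly general position. The degenerate ranges — when $|I|+t>n+1$, so that $J(L_I,\sigma_t)=\PP^n$, or when $k_C\le 0$ — cause no difficulty, as they are absorbed by the truncation $(\,\cdot\,)^+$.
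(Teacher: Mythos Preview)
Your proof is correct and follows essentially the same strategy as the paper: apply the linear-cycle multiplicity bound to the $(t+|I|-1)$-planes sweeping out the join, then pass to the closure by semicontinuity of multiplicity. The only cosmetic difference is that the paper proceeds in two steps---first bounding the multiplicity along $\sigma_t$ via its generic $t$-secant $(t-1)$-planes, then joining with $L_I$ line by line---whereas you apply the linear-cycle bound in one pass to the full spans $\Lambda(q_1,\dots,q_t)$; your write-up is more detailed (you sketch the B\'ezout/induction argument for the linear-cycle bound and treat the degenerate ranges), but the substance is identical.
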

\begin{proof}
The proof follows the same observation used in \cite[Lemma 4.1]{Conjecture}. We recall it here for the sake of completeness. Any  $t$-secant $(t-1)$-plane to $C$ will be contained in the base locus of $D$  at least $tk_C-(t-1)d$ times; moreover, since the multiplicity is semi-continuous, all limits of $t$-secant $(t-1)$-planes will be contained in the base locus of $D$  at least with the same multiplicity. This proves the statement for the secant variety $\sigma_t$. Moreover since every line spanned by a point of $L_I$ and a point of $\sigma_t$ is contained at least $k_I+k_{\sigma_t}-d$ times, the statement holds for all joins $J(L_I,\sigma_t)$.
\end{proof}

We introduce a further parameter, that will be needed in the next sections.
\begin{definition}\label{def-epsilon}
Let  $D=dH-\sum_{i=1}^s m_i E_i$ be an effective divisor in $X_s^n$. We denote by $\varepsilon = \varepsilon(D)$ the \textit{exceeding number} of $D$, which we define as the unique number $\varepsilon \in \{0, \ldots, s-n-3\}$ such that
\begin{equation}\label{kC with epsilon} k_C= \left\lceil \frac{\sum_{i=1}^s m_i - nd}{s-n-2} \right\rceil = \frac{\sum_{i=1}^s m_i - nd + \varepsilon}{s-n-2}.\end{equation}
\end{definition}

\begin{remark} \label{redundant} A crucial first remark that follows from Lemma \ref{boundkc} is that, when $D$ is an effective divisor such that $k_C(D) > m_i >0$  for some index $i \in \{1, \ldots, s\}$ then, the point $p_i$ is \textit{redundant} in the sense that 
$$ H^0(X_s^n, D) \cong H^0(X_{s-1}^n, D + m_i E_i), $$ or, equivalently,
$$\dim (\mathcal L_{n,d}(m_1,  \ldots , m_s)) = \dim (\mathcal L_{n,d}(m_1, \ldots, \check m_i, \ldots , m_s)).$$
By repeating this process, we obtain that any linear system $|D|$ (or $\ls$) is isomorphic to a non-redundant one, that is one for which $m_i\ge k_C$ for every $i\in\{1,\dots,s\}$. \end{remark}

\subsection{Castelnuovo restriction sequences to exceptional divisors}\label{CT construction}  %
In this section we recall the geometric construction used in \cite[Section 4]{CT}. For an effective divisor $D \in \text{Pic}(X_s^n)$, the short exact sequence
$$ 0 \rightarrow \mathcal O_{X_s^n}(D - E_1) \rightarrow \mathcal O_{X_s^n}(D) \rightarrow \mathcal O_{E_1}(D_{\mid E_1}) \rightarrow 0, $$
 yields the exact sequence in cohomology
\begin{align} \label{exactsequence} 
0 \rightarrow H^0(X_s^n, D - E_1) \rightarrow H^0(X_s^n, D) \overset{\rho'}{\rightarrow}  H^0(E_1, D_{|E_1}). 
\end{align}

\noindent The map $\rho'$ is the so called Castelnuovo restriction map and it is in general not surjective. To describe its image, consider the projection $\pi_1: \mathbb P^n \dashrightarrow \mathbb P^{n-1}$  from $p_1 \in \mathbb P^n$ to a general hyperplane $\Pi \cong \mathbb P^{n-1}$. For $i=2, \ldots, s$, the point $p_i$ is mapped to $q_i = \pi_1(p_i)$ that lies on $\pi_1(C)$, which is a rational normal curve of degree $n-1$ in $\Pi$. We will now use the notation $X_{s-1}^{n-1}$ to for the blow-up of $\Pi\cong\mathbb P^{n-1}$  at the points $q_2, \ldots , q_s$. We will write $E_{2,n-1}, \ldots, E_{s,n -1}$ for the exceptional divisors on $X_{s-1}^{n-1}$ and $H_{n-1}$ for the hyperplane class. Then,  defining the linear map $l: \text{Pic}(X_s^n) \longrightarrow \text{Pic}(X_{s-1}^{n-1})$ that sends a divisor   
$$ D= dH-\sum_{i=1}^s m_i E_i \in \text{Pic}(X_s^n)$$
\noindent to the divisor 
\begin{equation} \label{lmap}
 l(D)=m_1 H_{n-1} - \sum_{i=2}^s (m_1 + m_i - d) E_{i,n-1} \in  \text{Pic}(X_{s-1}^{n-1}),
\end{equation}
the map $\rho '$ can be interpreted as a map of spaces of global sections
$$\rho: H^0(X_s^n, D) \rightarrow H^0(X_{s-1}^{n-1}, l(D)).$$

Like $\rho'$, the map $\rho$ is not surjective in general, and this is due to Lemma \ref{boundkc}. Indeed, if we let $q \in \mathbb P^{n-1}$ be the intersection of the tangent line to $C$ at $p_1$ with the hyperplane $\Pi$, then $q \in \pi_1(C)=C_{n-1}$. By Lemma \ref{boundkc}, one concludes that any section in $H^0(X_s^n, D)$ restricts to a section in $H^0(X_s^{n-1}, l(D))$ that vanishes at $q$ with multiplicity at least $k_C^+$. Thus, let us denote by $X_s^{n-1} = \textrm{Bl}_q X_{s-1}^{n-1}$ the blow-up of $X_{s-1}^{n-1}$ at the point $q$, and by $E_{1,n-1}=E_q$ the exceptional divisor. The image of the map $\rho$ is the push-forward of $H^0(X_s^{n-1}, l(D)-k_C^+ E_q)$ to $H^0(X_{s-1}^n, l(D))$, see \cite[Proposition 4.18]{CT}.
From the exact sequence (\ref{exactsequence}), we now obtain the formula:
\begin{equation} \label{indstep} h^0(X_s^n, D) - h^0(X_s^n, D-E_1) = h^0(X_s^{n-1}, l(D) - k_C^+ E_q). \end{equation}

\begin{remark}
Since the authors are not aware of any reference making a connection between the work of Castravet and Tevelev and the conjecture posed by Catalisano, Elli\'a and Gimigliano, we point out that (\ref{indstep}) is equivalent to  \cite[Conjecture C]{CEG}. Hence, Castravet and Tevelev's work \cite{CT} proves this conjecture.
\end{remark}


\section{Statement of the main theorem} \label{MainTheorem}

The goal of this section is to give a closed formula for the dimension of any effective divisor $D$ on $X^n_s$ and to show that it is completely explained by means of the subvarieties $J(L_I, \sigma_t)$ and the corresponding integers $k_{I,\sigma_t}(D)$ defined in Lemma \ref{cor-multjoins}.
The contribution of each such subvariety will be measured by a recursive function that depends on $k_{I,\sigma_t}(D)$, which we are now going to define.

Define $F_t(a, s, \varepsilon, n)$ a function of  five parameters $t, s, \varepsilon, n \in \mathbb N$ and $a \in \mathbb Z$ recursively as follows:
\begin{equation} \label{F_0} F_0(a,s,\varepsilon , n ):= \binom{a}{n}, \end{equation}
\begin{equation} \label{F_t} F_t(a, s, \varepsilon , n) := \binom{a}{n} + \sum_{i=1}^t \binom{s-n-4 + i}{i} \binom{a+i}{n} - \sum_{i=1}^t \binom{\varepsilon}{i}F_{t-i}(a,s,\varepsilon, n-i). \end{equation}
Here we are using the following standard convention: for any  $n\in\mathbb{N}$ and $a,b \in\mathbb{Z}$ with $a<n$ and $b\ge 0$:
$$\binom{a}{n}=0, \quad  \binom{b}{0}=1.$$

\begin{theorem} \label{ThmC} Let $D=dH-\sum_{i=1}^s m_i E_i$ be a non-redundant effective divisor on $X_s^n$ with $s\geq n+3$. Let $\varepsilon$ be the exceeding number of $D$. Then 
\begin{equation} \label{dim} h^0(X_s^n, D)= \sum_{I, \sigma_t} (-1)^{|I|}F_t (n + k_{I, \sigma_t} - r_{I, \sigma_t} - 1, s, \varepsilon, n), \end{equation}
where the sum ranges over all the indices $0\leq t \leq  n/2 $ and $I \subseteq \{1, \ldots , s \}$ such that $0 \leq |I| \leq n-2t$.
\end{theorem}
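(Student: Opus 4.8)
The plan is to prove \eqref{dim} by induction on the dimension $n$, using the Castelnuovo restriction identity \eqref{indstep} as the engine that descends from $\PP^n$ to $\PP^{n-1}$. The base case is $n=2$, where $C$ is a smooth conic: here I would establish the closed formula by a direct analysis of plane curves through points on a conic, which should be the content of Proposition~\ref{ThmdimP2} (the case $n=1$ being trivial and serving only to anchor the secant indices). For the inductive step I would assume \eqref{dim} for all non-redundant effective divisors on $X^{n-1}_{s'}$, for every $s'$.

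For a fixed non-redundant effective $D=dH-\sum_i m_iE_i$ on $X^n_s$, the idea is to iterate \eqref{indstep} by repeatedly subtracting $E_1$. Writing $D_k:=D-kE_1$, the systems $|D_k|$ become empty for $k$ large, so the identity telescopes to
\begin{equation*}
h^0(X^n_s,D)=\sum_{k\ge 0} h^0\bigl(X^{n-1}_s,\, l(D_k)-k_C(D_k)^+E_q\bigr).
\end{equation*}
Each summand is an effective divisor on $X^{n-1}_s$, the blow-up of $\PP^{n-1}$ at the points $q_2,\dots,q_s$ together with the extra point $q$ coming from the tangent direction of $C$ at $p_1$; after discarding redundant points via Remark~\ref{redundant} (or, equivalently, relying on the vanishing convention $\binom{a}{n}=0$ for $a<n$ built into $F_t$), the inductive hypothesis supplies the closed formula for every term.

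The core of the proof is then to show that summing the $(n-1)$-dimensional formulas over $k$ reproduces the $n$-dimensional formula. This rests on three ingredients, which I would isolate as Propositions~\ref{propB}, \ref{propC} and \ref{propD}. First, a dictionary between the special effect varieties: under projection from $p_1$ one must track how each $J(L_I,\sigma_t)\subseteq\PP^n$ corresponds to a join in $\PP^{n-1}$, distinguishing the cases $1\in I$ and $1\notin I$ and recording the contribution of $q$ as a tangential degeneration of $\sigma_t$; this fixes the transformation of the indices $r_{I,\sigma_t}$ and $k_{I,\sigma_t}$ and preserves the alternating sign $(-1)^{|I|}$. Second, one controls the arithmetic of the parameters: as $k$ runs over one residue window of length $s-n-2$, the ceiling defining $k_C(D_k)$ increases by exactly one and the exceeding number $\varepsilon(D_k)$ runs through all values in $\{0,\dots,s-n-3\}$, which is responsible for the binomial weights $\binom{s-n-4+i}{i}$ appearing in \eqref{F_t}. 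Third, the recursion \eqref{F_t} for $F_t$ is engineered so that this periodic summation collapses: the leading $\binom{a}{n}$ and the $\binom{s-n-4+i}{i}\binom{a+i}{n}$ terms assemble the $\sum_k$ of the leading pieces, while the correction $-\sum_i\binom{\varepsilon}{i}F_{t-i}(a,s,\varepsilon,n-i)$ absorbs exactly the contributions of the extra point $q$ and the $\varepsilon$ excess. The purely binomial identities underlying these collapses are the technical properties I would collect in Appendix~\ref{Fiveproperties}.

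I expect the main obstacle to be this combinatorial reconciliation, rather than the geometry of the restriction sequence, which is already packaged in \eqref{indstep}. The difficulty is threefold: correctly matching the index sets of special effect varieties across the projection (including the new joins involving $q$); keeping the signs $(-1)^{|I|}$ and the shifts in $r_{I,\sigma_t}$, $k_{I,\sigma_t}$ consistent; and handling the boundary transitions where $k_C(D_k)$ jumps or a point becomes redundant, since these are exactly the places where a naive summation would over- or under-count. Verifying that the recursive definition of $F_t$ neutralizes all of these simultaneously is, I anticipate, the crux of the argument.
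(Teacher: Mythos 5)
Your geometric setup is sound: iterating \eqref{indstep} on $D_k := D - kE_1$ does telescope to
$h^0(X^n_s,D)=\sum_{k\ge 0} h^0\bigl(X^{n-1}_s,\, l(D_k)-k_C(D_k)^+E_q\bigr)$,
and each summand lives on a blow-up of $\PP^{n-1}$ at (at most) $s$ points of a rational normal curve of degree $n-1$, so the inductive hypothesis on $n$ is formally applicable. But the proof stops there: the entire analytic content of the theorem --- that the sum over $k$ of the dimension-$(n-1)$ formulas collapses to the dimension-$n$ formula \eqref{dim} --- is asserted, not proven. You correctly name the three obstructions (the dictionary of joins under projection, the jumps of $k_C(D_k)$ with the cycling of $\varepsilon(D_k)$, and the redundancy transitions), but you supply no argument that neutralizes them, and this is where essentially all of the difficulty is concentrated. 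Note in particular that the summands are not uniformly non-redundant: for various $k$ the restricted divisor must have points discarded before the formula applies, which changes the parameter $s$ (and hence every argument of $F_t$) term by term, so the sum you must evaluate is not even a sum of formally homogeneous expressions; and when the number of surviving points drops below $(n-1)+3$ the formula \eqref{dim} no longer applies at all and one must instead invoke the linear non-speciality results for few points.

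The structure of the paper's proof is evidence that this collapse cannot be carried by induction on $n$ alone. The paper telescopes only in the special case $k_C^+=0$ (Proposition \ref{propB}), precisely because there all $E_q$-corrections vanish and the hockey-stick identity \eqref{hockey} suffices; even then it telescopes in the opposite direction (adding $E_1$ until the point disappears) and needs an auxiliary induction on $s$. In the general case the paper never telescopes: it performs a single step of \eqref{indstep} with $D+E_1$ and disposes of the term $h^0(X_s^n,D+E_1)$ by induction on \emph{other} parameters --- backward induction on $\varepsilon$ (since $\varepsilon(D+E_1)=\varepsilon(D)+1$ when $\varepsilon<s-n-3$ and $m_1\ge k_C+1$), induction on $k_C$ when $\varepsilon=s-n-3$ (Proposition \ref{propC}), and induction on $s$ in the homogeneous case $m_1=\cdots=m_s=k_C$ (Proposition \ref{propD}), anchored by the planar case (Proposition \ref{ThmdimP2}). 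The reason is that the one-step relations between $k_{I,\sigma_t}(D)$, $k_{I,\sigma_t}(D+E_1)$ and $k_{I,\sigma_t}(D_{n-1})$ change qualitatively across these regimes, and the five appendix properties of $F_t$ are one-step identities tailored to exactly these regime-by-regime reductions. Your plan would instead require new global identities --- summing $F_t$ over a full residue window of $k$ with $\varepsilon$ cycling, uniformly across redundancy jumps --- which you neither state nor prove. As it stands the proposal is a strategy outline whose crux is missing, not a proof.
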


\begin{remark} Theorem \ref{ThmC} solves the dimensionality problem for all linear systems on $X^n_s$. In fact by Remark \ref{redundant}, if $D$ is redundant, its dimension equals that of a non-redundant one.
\end{remark}

\begin{remark}
We point out that  the contribution to formula \eqref{dim} of the subvariety $J(L_I,\sigma_t)$ is given by the integer
\begin{equation} \label{contribution of join} (-1)^{|I|} F_t (n + k_{I, \sigma_t} - r_{I, \sigma_t} - 1, s, \varepsilon, n), \end{equation}
where $k_{I,\sigma_t}$ is defined in (\ref{multjoins}), 
 $r_{I,\sigma_t}$ is defined in \eqref{dim-cones},  and $\varepsilon$ is as in Definition \ref{def-epsilon}.
 \end{remark}

\begin{remark} The expression on the right hand side of formula \eqref{dim} refines the virtual dimension of $\mathcal L_{n,d}(m_1,\dots,m_s)$, given explicitly in the Introduction. Indeed, if we set  $t=|I|=0$, so that 
$k_{\emptyset, \sigma_0} = d$ and  $r_{\emptyset, \sigma_0}= -1$,
we obtain
$$ (-1)^0F_0(n+d, s , \varepsilon,n) = \binom{n+d}{n}. $$
Similarly, setting $t=0$ and $I=\{i\} \subset \{1, \ldots, s\}$, so that  $k_{\{i\}, \sigma_0}=m_i$ and $r_{\{i\},\sigma_0}=0$, we get 
$$ (-1)^1 F_0 (n + m_i  - 1, s, \varepsilon, n) = - \binom{n+m_i -1}{n}. $$ 
Moreover, if $\mathcal L$ is a non-empty linear system for which $k_{I,\sigma_t} \leq 0$ for every join $J(L_I, \sigma_t)$, then $\mathcal L$ is non-special. In fact one can easily verify that the only non-zero terms of \eqref{dim} are the ones above, so that the dimension of $\ls$ equals its virtual dimension in this case.
\end{remark}

\subsection{Consequences of the main theorem}
As a corollary of Theorem \ref{ThmC} we obtain a different proof of the result of Catalisano, Trung and Valla \cite[Proposition 7]{CTV}. We include it here for the sake of completeness.
\begin{corollary} 
Let $p_1, \ldots, p_s$ be $s$ points on a rational normal curve of degree $n$ in $\mathbb P^n$ and $m_1 \geq \ldots \geq m_s$ positive integers. Then, the linear system $\mathcal L= \mathcal L_{n,d}(m_1, \ldots, m_s)$ is non-special if and only if 
$$d \geq \max \left\{m_1+m_2-1, \left\lfloor \frac{\sum_{i=1}^s m_i + n-2}{n} \right\rfloor \right\}. $$
\end{corollary}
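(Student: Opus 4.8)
The plan is to deduce this corollary directly from the dimension formula \eqref{dim} in Theorem \ref{ThmC}. The linear system $\mathcal L$ is non-special precisely when $\dim(\mathcal L)=\vdim(\mathcal L)$, and by the last Remark this happens exactly when the only surviving terms of \eqref{dim} are the $t=|I|=0$ terms recovering $\vdim(\mathcal L)$; equivalently, when every join $J(L_I,\sigma_t)$ with $t\ge 1$ or $|I|\ge 1$ contributes zero. Since the multiplicities are ordered $m_1\ge\cdots\ge m_s$, I expect the \emph{most constraining} joins to be the smallest secant $\sigma_1=C$ and the line $L_{\{1,2\}}$ through the two points of largest multiplicity. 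So the first step is to translate ``every special-effect variety contributes zero'' into the two numerical conditions $k_C\le 0$ (no curve contribution) together with $k_{I,\sigma_t}\le 0$ for all nontrivial joins, and then show that the binding inequalities among these are exactly the two appearing in the $\max$.

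Concretely, I would first handle the secant $C=\sigma_1$: the contribution of $C$ vanishes iff $k_C\le 0$, and by \eqref{multC} this reads $\sum_i m_i - nd\le 0$, i.e. $d\ge \frac{1}{n}\sum_i m_i$. Taking into account that $k_C$ is defined via a ceiling and that $k_C\le 0$ is equivalent to the numerator being at most $0$ only after accounting for the divisor $s-n-2$, the clean integral form is $d\ge\left\lfloor\frac{\sum_i m_i+n-2}{n}\right\rfloor$; deriving this floor expression from the ceiling in \eqref{multC}, via \eqref{kC with epsilon}, is a short but careful arithmetic manipulation that I would carry out explicitly. Second, I would analyze the linear joins: by \eqref{multjoins} the line $L_{\{1,2\}}$ (the case $t=0$, $I=\{1,2\}$) has $k_{\{1,2\},\sigma_0}=m_1+m_2-d$, so its contribution vanishes iff $d\ge m_1+m_2-1$, which is the other term in the $\max$.

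The heart of the argument is then a \emph{monotonicity/dominance} claim: if these two inequalities hold, then $k_{I,\sigma_t}\le 0$ for \emph{every} join, so that by the Remark all higher contributions vanish and $\mathcal L$ is non-special; conversely, if either fails, at least one special-effect variety contributes a strictly positive term, forcing speciality. For the forward direction I would bound a general $k_{I,\sigma_t}=\sum_{i\in I}m_i+tk_C-(t+|I|-1)d$ by comparing it to the extremal cases, using $m_i\le m_1$ together with $k_C\le 0$ and $d\ge m_1+m_2-1$ to show the general join is no more positive than $L_{\{1,2\}}$ or $C$; the ordering $m_1\ge\cdots\ge m_s$ is what makes these the worst cases. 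For the converse I would exhibit, when an inequality is violated, the specific join whose $k_{I,\sigma_t}$ is positive and check via \eqref{F_t} that its contribution $(-1)^{|I|}F_t(\cdots)$ is nonzero with the correct sign, so it genuinely raises the dimension above the virtual one.

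The main obstacle I anticipate is the converse together with the sign bookkeeping in the inclusion--exclusion sum \eqref{dim}: a single positive $k_{I,\sigma_t}$ guarantees a nonzero individual term, but because \eqref{dim} is an alternating sum over all joins one must rule out cancellation, i.e. argue that the positive contribution of the extremal join is not annihilated by other terms. I would address this by showing that when exactly the binding inequality is violated by a minimal amount, only a controlled set of joins have $k_{I,\sigma_t}>0$ and their combined contribution, read off from the explicit recursion for $F_t$, is strictly positive; establishing this strict positivity of the residual $\dim(\mathcal L)-\vdim(\mathcal L)$ is the genuinely delicate step. The remaining work — rewriting the ceiling in \eqref{multC} as the floor in the statement and verifying the extremal joins dominate — is routine once the framework is set up.
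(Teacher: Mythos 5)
Your overall strategy is the same as the paper's: invoke Theorem \ref{ThmC}, reduce non-speciality to the vanishing of all special-effect contributions, and identify the line $L_{\{1,2\}}$ and the curve $C$ as the binding cases (your line analysis, giving $d\ge m_1+m_2-1$, agrees with the paper). The genuine gap is your claim that the contribution of $C$ vanishes if and only if $k_C\le 0$. The contribution of $C$ is
\begin{equation*}
F_1(n+k_C-2,s,\varepsilon,n)=\binom{n+k_C-2}{n}+(s-n-3)\binom{n+k_C-1}{n}-\varepsilon\binom{n+k_C-2}{n-1},
\end{equation*}
and this vanishes not only when $k_C\le 0$ but also in one case with $k_C$ positive, namely $k_C=1$ and $\varepsilon=s-n-3$, which by \eqref{kC with epsilon} happens exactly when $\sum_{i=1}^s m_i-nd=1$ (for $k_C\ge 2$ it is strictly positive, since $\varepsilon\le s-n-3$ and $\binom{n+k_C-1}{n}\ge\binom{n+k_C-2}{n-1}$). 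This boundary case is precisely what produces the floor in the statement: the curve's term vanishes if and only if $d\ge\frac{1}{n}\left(\sum_{i=1}^s m_i-1\right)$, i.e. $d\ge\left\lceil\frac{\sum_i m_i-1}{n}\right\rceil=\left\lfloor\frac{\sum_i m_i+n-2}{n}\right\rfloor$. Your condition $k_C\le 0$ is instead equivalent (since $s-n-2\ge 1$) to $d\ge\left\lceil\frac{\sum_i m_i}{n}\right\rceil$, and no arithmetic manipulation of the ceiling in \eqref{multC} can turn one into the other: the two thresholds genuinely differ by $1$ whenever $\sum_i m_i\equiv 1\pmod n$, so the statement your outline would prove is false. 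Concretely, take $n=2$, $s=5$, $d=2$, $m_1=\cdots=m_5=1$: here $k_C=1>0$, yet the only conic through five points of a smooth conic $C$ is $C$ itself, so $h^0=1=\vdim(\LL)$ and the system is non-special, consistent with $d=2=\lfloor 5/2\rfloor$ but contradicting your criterion, which would demand $d\ge 3$.

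Two smaller points. First, Theorem \ref{ThmC} applies only to non-redundant divisors, and your outline never addresses redundancy; the paper's proof begins by discarding redundant systems as automatically special. In the ``if'' direction this costs nothing, because the stated bound forces $k_C\le 1\le m_s$ and hence non-redundancy, but the ``only if'' direction does need it. Second, your worry about cancellation in the alternating sum \eqref{dim} largely dissolves once the dominance claim is in place: under the two binding inequalities every extra term of \eqref{dim} vanishes identically (not merely in total), so the ``if'' direction needs no cancellation analysis; for the ``only if'' direction the point is that the sum of all extra terms equals $h^1\ge 0$, so it suffices that it be nonzero when a binding inequality fails, and this is where the corrected analysis of the curve term above (in particular its strict positivity for $k_C\ge 2$) is exactly what must be fed in.
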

\begin{proof}
For fixed multiplicities $m_1\geq \cdots \geq m_s$ let $D=dH -\sum_{i=1}^s m_i E_i$ be the corresponding divisor on $X_s^n$. First, notice that if $D$ is redundant then the linear system it describes is always special, due to the redundant points. Thus, $d$ needs to be large enough so that $D$ is non-redundant. Under this assumptions Theorem \ref{ThmC} says that $|D|$ is non-special when no special effect variety contributes to its dimension formula. This happens as soon as neither the lines through pairs of points nor the rational normal curve give a contribution.

Now, the contribution of a line $L_{ij}$ through $p_i$ and $p_j$ to the dimension of $|D|$ is computed by the formula $$ F_0(n+k_{ij}-2,s,\varepsilon,n) = \binom{m_i+m_j-d+n-2}{n}. $$
Since we are assuming that $m_1 \geq \ldots \geq m_s$, we conclude that no line will make a contribution if and only if 
\begin{equation} \label{nolines} d \geq m_1 + m_2 -1. \end{equation}

As for the rational normal curve $C$, its contribution to the dimension of $|D|$ is
\begin{equation} \label{contr dim C} F_1(n+k_C-2,s, \varepsilon, n)= \binom{n+k_C-2}{n} +(s-n-3)\binom{n+k_C-1}{n}-\varepsilon \binom{n+k_C-2}{n-1}. \end{equation}
It is clear that, if $k_C\leq 0$, then \eqref{contr dim C} vanishes. Otherwise, we observe that  \eqref{contr dim C}  vanishes if and only if $k_C=1$ and $\varepsilon =s-n-3$, which happens when
$$ \sum_{i=1}^s m_i -nd = 1, $$
by equation (\ref{kC with epsilon}).
Putting these two cases together, we can say that \eqref{contr dim C} vanishes if and only if
\begin{equation} \label{noRNC} d \geq \frac{\sum_{i=1}^s m_i -1}{n}. \end{equation}

The statement follows by putting (\ref{nolines}) and (\ref{noRNC}) together, and noticing that
$$\left\lceil \frac{\sum_{i=1}^s m_i -1}{n}\right\rceil=\left\lfloor \frac{\sum_{i=1}^s m_i + n-2}{n} \right\rfloor.$$ The details of this computation are left to the reader. \end{proof}

The main theorem also implies  an answer to a question posed by H\`a and Montero \cite[Problem 6.4]{HM} for the case of double points $m_1 =\cdots =m_s=2$. 

\begin{corollary} Let $D=dH-\sum_{i=1}^s 2 E_i $ be an effective divisor on $X_s^n$. 
\begin{itemize}
\item If $$d\geq \frac{2s-1}{n},$$ then the linear system $|D|$ is non-special.
\item If 
$$ \frac{s+n+2}{n} \leq d < \frac{2s-1}{n}, $$
then $|D|$ is special and its speciality is given by
\begin{align*}
 h^1(X_s^n,D) &= F_1(n+1-1-1, s, nd-n-s-2, n) \\
			&= 2s -nd -1.
\end{align*}
\item If 
$$ d < \frac{s+n+2}{n} $$
then $|D|$ is special and its speciality is given by
\begin{align*}
 h^1(X_s^n,D) &= F_1(n+2-1-1, s, n(d-2)-4, n) \\
			&= s(n+1)-n^2(d-1)-2.
\end{align*}
\end{itemize}
\end{corollary}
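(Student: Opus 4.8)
The plan is to derive the three cases directly from Theorem \ref{ThmC} by specializing all multiplicities to $m_i=2$ and then identifying which special effect varieties $J(L_I,\sigma_t)$ actually contribute. First I would set up the relevant parameters. With $m_1=\cdots=m_s=2$, formula \eqref{multC} gives $k_C=\lceil (2s-nd)/(s-n-2)\rceil$, and \eqref{kC with epsilon} determines $\varepsilon$. The non-speciality threshold is immediate from the previous corollary: taking all $m_i=2$ in the bound $d\ge\lfloor(\sum m_i+n-2)/n\rfloor$ (equivalently $d\ge(2s-1)/n$ after the ceiling/floor identity) kills the contribution of both the lines and the rational normal curve, so $|D|$ is non-special, giving the first bullet.

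For the two special cases the key is to track $k_C$ as $d$ decreases. When $(s+n+2)/n\le d<(2s-1)/n$ I expect $k_C=1$, and the only contributing special effect variety is the rational normal curve itself, i.e.\ the term with $t=1$, $I=\emptyset$, $r_{\emptyset,\sigma_1}=1$. Its contribution \eqref{contribution of join} is $F_1(n+k_C-r-1,s,\varepsilon,n)=F_1(n-1,s,\varepsilon,n)$ with $\varepsilon=nd-n-s-2$ computed from \eqref{kC with epsilon}; since the virtual dimension already accounts for the $t=|I|=0$ terms, this single correction equals $h^1$, and unwinding the recursion \eqref{F_t} for $F_1$ should yield $2s-nd-1$. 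When $d<(s+n+2)/n$ I expect $k_C=2$, so now both $C$ (with $t=1$) contributes through $F_1(n+2-1-1,s,\varepsilon,n)=F_1(n,s,\varepsilon,n)$ and potentially the lines $L_{ij}$; but with all $m_i=2$ the line contribution $F_0(m_i+m_j-d+n-2,s,\varepsilon,n)=\binom{4-d+n-2}{n}$ vanishes as long as $d\ge 3$, while higher secant varieties $\sigma_t$ for $t\ge2$ have $k_{\emptyset,\sigma_t}=2\cdot0+tk_C-(t-1)d=2t-(t-1)d\le 0$ in this range. So again only the curve contributes, now with $k_C=2$ and $\varepsilon=n(d-2)-4$, and evaluating $F_1(n,s,\varepsilon,n)$ via \eqref{F_t} should give $s(n+1)-n^2(d-1)-2$.

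The main obstacle will be the bookkeeping verifying that no other join $J(L_I,\sigma_t)$ contributes, i.e.\ that $k_{I,\sigma_t}\le 0$ for every $(I,t)$ outside the claimed one in each range. I would carry this out by plugging $m_i=2$ into \eqref{multjoins} to get $k_{I,\sigma_t}=2|I|+tk_C-(t+|I|-1)d$ and checking that the contributing index requires $n+k_{I,\sigma_t}-r_{I,\sigma_t}-1\ge n$, which forces $k_{I,\sigma_t}\ge r_{I,\sigma_t}+1=|I|+2t$; combined with the value of $k_C$ and the degree bounds this should single out exactly $(I,t)=(\emptyset,1)$ in each case. The second genuine computation is reducing $F_1(n-1,s,\varepsilon,n)$ and $F_1(n,s,\varepsilon,n)$ from the recursive definition \eqref{F_t} to the stated closed forms; this is a direct substitution using $\binom{a}{n}=0$ for $a<n$ and the binomial conventions, and I expect it to collapse to an affine-linear expression in $d$ and $s$ after cancellation, matching the two displayed formulas.
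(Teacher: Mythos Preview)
Your plan is correct and is precisely the intended argument: the paper states this corollary without proof as an immediate specialization of Theorem~\ref{ThmC}, and your identification of $(I,t)=(\emptyset,1)$ as the sole contributing term beyond the virtual dimension, followed by evaluating $F_1$ explicitly, is exactly what is needed. One small correction to your bookkeeping: the vanishing criterion for $F_t(a,s,\varepsilon,n)$ is $a<n-t$ (cf.\ the proof of Lemma~\ref{property3}), not $a<n$, so a nonzero contribution requires only $k_{I,\sigma_t}\ge |I|+t$ rather than $|I|+2t$; in the present computation this does not alter the conclusion, but your stated threshold is off for $t\ge 1$.
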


\subsection{Magma library for Theorem \ref{ThmC} and an explicit example}\label{magma-section}
We have implemented a Magma library \cite{Magma} that computes the dimension of any given linear system. This library can be found at the following link:
\begin{center}
\url{https://github.com/alaface/PtsRatNormal}
\end{center}
In this section we work out the statement of Theorem \ref{ThmC} as well as the details of this library in a specific example. Let $$\mathcal L= \mathcal L_{5,8}(7,6^2, 5^7, 2^3)$$ denote a linear system  of hypersurfaces of degree $8$ of $\mathbb P^5$ with $13$ multiple points on a rational normal quintic curve.
We first plug in the data in the Magma code to compute the parameter $k_C$.

\begin{tbox}
\begin{verbatim}
> load "library.m";
> n := 5;
> d := 8;
> m := [7,6,6,5,5,5,5,5,5,5,2,2,2];
> kc(n,d,m); 
4
\end{verbatim}
\end{tbox}
We observe that $\mathcal L$ is a \emph{redundant} linear system since the three last points have multiplicity lower than $k_C$. Hence we may consider  $$\mathcal L' = \mathcal L_{5,8}(7,6^2,5^7),$$ and we have that $\dim(\mathcal L)= \dim(\mathcal L')$. We plug in the parameters of $\ls'$  and compute $k_C$.

\begin{tbox}
\begin{verbatim}
> n := 5;
> d := 8;
> m := [7,6,6,5,5,5,5,5,5,5];
> kc(n,d,m); 
5
\end{verbatim}
\end{tbox}
We can see that $\mathcal L'$  is non-redundant  since every point has multiplicity at least $k_C$. Thus, we may proceed to apply Theorem \ref{ThmC} to compute its dimension.

To do so, we first need to check which joins $J(L_I,\sigma_t)$, for $0\leq t\leq n/2$ and $I\subseteq \{1, \ldots, s\}$ with $0\leq |I|\leq n-2t$, are special effect varieties for $\mathcal L'$.  We look at those for which $k_{I,\sigma_t}> 0$, where the integer is computed in \eqref{multjoins}. For instance, we observe that the join $J(L_{\{2\}}, \sigma_1)$ is contained in the base locus of $\mathcal L'$ since,
$$k_{\{2\},\sigma_1} = 6+5-(1+1-1)8=3. $$
We may check this using the following function of the magma library:
\begin{tbox}
\begin{verbatim}
> ka(n,d,m,{2},1); // introducing the parameters (n,d,m,I,t)
3
\end{verbatim}
\end{tbox}

The complete list of joins $J(L_I, \sigma_t)$ with positive $k_{I,\sigma_t}$ for the linear system $\mathcal L'$, is:
\begin{itemize}
\item[\underline{curves}:] Every line $L_{\{i,j\}}$, where $1\leq i<j \leq 10$. \\
					The rational normal curve $C$.
\item[\underline{surfaces}:] Every cone $J(p_i, C)$, where $1\leq i \leq 10$. \\
					The $2$-planes $L_{\{i,j,k\}}$ where $1\leq i < j \leq 3$ and $j<k \leq 10$.
\item[\underline{3folds}:]  The secant variety $\sigma_2$. \\
					The cone $J(L_{\{i,j\}}, C)$ where $1\leq i<j\leq 3$.
\item[\underline{4folds}:]  The cone $J(p_1, \sigma_2)$.
\end{itemize}

\vspace{3mm}
For each special effect variety, we compute its contribution to the dimension of $\mathcal L'$, which is given by \eqref{contribution of join}. For that, we take into account the exceeding number of $\mathcal L'$, defined in Definition \ref{def-epsilon}. In this case $\varepsilon =1$.
For instance, the contribution of $J(L_{\{2\}}, \sigma_1)$ to the dimension of $\mathcal L'$ is 
$$ F_1(5+3-2-1, 10, 1,5) = - \binom{5}{5} + 2\binom{6}{5} - \binom{1}{1} \binom{5}{4} = 8.  $$
We can compute this number with the following function of the Magma library
\begin{tbox}
\begin{verbatim} 
> s:=10; // number of base points
> e:= 1; // exceeding number
> I:={2};
> t:=1;
> r:=del(I,t); // computes the dimension of the cycle
> k:=ka(n,d,m,I,t); // computes the associated parameter k_I,t
> Fu(n+k-r-1,s,e,n,t); // computes the contribution 
8
\end{verbatim}
\end{tbox}

Taking all these contributions into account with their corresponding sign, as in Theorem \ref{ThmC}, we obtain the dimension of $\mathcal L'$. This is done by the following function of the Magma library.
\begin{tbox}
\begin{verbatim} 
> Dim(n,d,m);
6
\end{verbatim}
\end{tbox}

Thus, $\dim(\mathcal L)=\dim(\mathcal L') = 6$.

\section{Proof of the main theorem}\label{ProofMainTheorem}	

The rest of this manuscript is exclusively devoted to the proof of Theorem \ref{ThmC}. 
The proof is based on a backward induction on the exceeding number $\varepsilon$ and on a double induction on the dimension of the ambient space $n$ and on the integer $k_C^+$. It will be broken down is several steps, according to the following plan.
\begin{itemize}
\item We show that theorem holds for non-redundant linear systems with $n=2$ (Proposition \ref{ThmdimP2}). 
\item Using induction on $n$, we prove that the theorem is true when $k_C^+=0$. 
(Propostition \ref{propB}). 
\item We use induction on $n$ and on $k_C^+$ simultaneously to cover the case $\varepsilon=s-n-3$
(Proposition \ref{propC}). 
\item We settle the homogeneous case $m_1=\ldots = m_s = k_C$ (Proposition \ref{propD}). 
\item We show that Theorem \ref{dim}  holds in general by induction using all of the above cases as base steps.
\end{itemize}



We now state five  arithmetical properties of the function $F_t(a,s,\varepsilon,n)$ that we will extensively  use in the proof of Theorem \ref{dim}. The proof of these properties will be given in  Appendix \ref{Fiveproperties}.

%
\begin{property} \label{property2} 
$F_t(a,s-1,\varepsilon, n)  + F_{t-1}(a+1, s, \varepsilon, n)=F_t(a,s, \varepsilon, n)$, for every $t, s, \varepsilon, n \in \mathbb N$ with $t\geq 1$, and every $a \in \mathbb Z$.
\end{property}
\begin{property}\label{property4}  
$F_t(a,s, 0, n) + F_{t-1}(a, s, 0, n-1) = F_t(a+t, s, s-n-3, n)$, for every $t, a, s, n \in \mathbb N$, with $t\geq 1$ and $s\geq n+3$.\end{property}
\begin{property} \label{property5} \label{vareps02} 
$F_t(a,s, 0, n) + F_{t}(a, s, 0, n-1) = F_t(a+t+1, s, s-n-3, n),$ for every $t, a, s, n \in \mathbb N$ with $s\geq n+3$.
\end{property}
\begin{property} \label{property6} \label{vareps2}
$F_t(a,s, \varepsilon, n) + F_{t-1}(a,s, \varepsilon, n-1) = F_t(a, s, \varepsilon -1,n)$, for all $t, a, s, \varepsilon, n \in \mathbb{N}$ with  $\varepsilon \neq 0$.
\end{property}
\begin{property} \label{property7} \label{vareps1}
$F_t(a,s, \varepsilon, n) + F_t(a, s, \varepsilon, n-1) = F_t(a+1, s, \varepsilon -1, n)$, for every $t, a, s, \varepsilon, n \in \mathbb{N}$ with $\varepsilon \neq 0 $.
\end{property}


\subsection{The planar case}
 The dimensionality problem for linear systems of plane curves with multiple points on a smooth conic was solved algorithmically in the  language of Hilbert functions of schemes of fat points in \cite[ Theorem 3.1]{C}.
 We will give an alternative geometric proof here, that stresses how lines though pairs of points and the conic contribute to the dimension formula. 
 

In the notation of Section \ref{preliminaries}, given a curve class $D=dH-\sum_{i=1}^s m_i E_i$ on $X_s^2$, with $m_i >0$ for every $i \in \{1, \ldots, s\}$, we set 
\begin{align} \label{G(D)}
G(D) := &\binom{d+2}{2} - \sum_{i=1}^s\binom{m_i+1}{2} + \sum_{1\leq i<j\leq s} \binom{k_{ij}}{2} \\
				       & + \left(\binom{k_C}{2} + (s-5)\binom{k_C+1}{2} - \varepsilon \binom{k_C}{1} \right) . \nonumber
\end{align}
We will show that if $D$ is a non-redundant divisor, then 
$ h^0(X_s^2, D) = G(D).$




Before doing that, we recall the description of the effective cone of $X^2_s$ via its extremal rays, that are the rays spanned by
 the exceptional divisors and by the strict transforms of the lines $L_{ij}$ and of the conic $C$.
\begin{theorem} \label{EffP2} (\cite{CT}, Theorem 1.2)   
Let $X_s^2$ be the blow-up of $\mathbb{P}^2$ along $s$ points on a conic $C$. The effective cone of $X^2_s$ is finitely generated by the classes
\begin{enumerate}
\item $E_i$, for every $1\le i \le s$,
\item
$H - E_i - E_j$, for every $1\leq i < j \leq s$,
\item $2H - \sum_{i=1}^s E_i. $
\end{enumerate}
\end{theorem}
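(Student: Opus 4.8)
The plan is to prove the equality of cones $\overline{\Eff}(X_s^2) = \Pi$, where $\Pi$ denotes the closed convex cone generated by the classes $E_i$, $H - E_i - E_j$ and $C = 2H - \sum_i E_i$ listed in the statement; since $\Pi$ is polyhedral, this yields finite generation. The inclusion $\Pi \subseteq \overline{\Eff}(X_s^2)$ is immediate, as each generator is represented by an effective curve. For the reverse inclusion I would organize everything around the surface structure theorem coming from Zariski decomposition: for a smooth projective surface the pseudoeffective cone is the closure of $Q^+ + \sum_{C'} \mathbb{R}_{\ge 0}[C']$, where $Q^+ = \{x : x^2 \ge 0,\ x \cdot H \ge 0\}$ is the forward cone of the intersection form (Lorentzian, of signature $(1,s)$, by the Hodge index theorem) and the sum runs over the irreducible curves $C'$ of negative self-intersection. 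Thus it suffices to (i) classify the irreducible negative curves, and (ii) show $Q^+ \subseteq \Pi$.

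For (i), let $D = dH - \sum_i m_i E_i$ be an irreducible curve other than an $E_i$, so $d \ge 1$ and $m_i \ge 0$. The cases $d = 1$ and $d = 2$ I would handle by hand: a line meets $C$ in at most two points, giving exactly the classes $H - E_i - E_j$ of negative self-intersection, while an irreducible conic through five of the $p_i$ must equal $C$ (five points determine a unique conic), so $C = 2H - \sum_i E_i$ is the only negative curve in degree $2$. For $d \ge 3$ I would combine two inequalities: the adjunction/genus bound $\sum_i \binom{m_i}{2} \le \binom{d-1}{2}$, valid for any irreducible plane curve, and the bound $\sum_i m_i = 2d - D \cdot C \le 2d$ coming from $D \cdot C \ge 0$ (as $D \ne C$ are distinct irreducible curves). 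Together these give $\sum_i m_i^2 \le (d-1)(d-2) + 2d = d^2 - d + 2$, hence $D^2 = d^2 - \sum_i m_i^2 \ge d - 2 > 0$. So there are no negative curves of degree $\ge 3$, and the complete list of negative curves is $\{E_i,\ H - E_i - E_j,\ C\}$.

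For (ii), I would pass to the dual cone and use that the forward cone $Q^+$ is self-dual for the Lorentzian intersection form; thus $Q^+ \subseteq \Pi$ is equivalent to $\Pi^\vee \subseteq Q^+$. Writing a test class as $y = aH - \sum_i b_i E_i$, membership in $\Pi^\vee$ reads $b_i \ge 0$, $a - b_i - b_j \ge 0$ and $2a - \sum_i b_i \ge 0$; the first two force $a \ge 0$, and an elementary case analysis (splitting on whether the largest $b_i$ is $\le a/2$ or $\ge a/2$) yields $\sum_i b_i^2 \le a^2$, i.e. $y^2 \ge 0$, so $y \in Q^+$. Dualizing back gives $Q^+ \subseteq \Pi$. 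Combining (i) and (ii) with the structure theorem produces $\overline{\Eff}(X_s^2) = \overline{Q^+ + \Pi} = \Pi$, which is closed and polyhedral, as required.

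I expect the main obstacle to be step (i): ruling out \emph{unexpected} negative curves of high degree. The elementary inequality in (ii) is routine once the problem is dualized, and the structure theorem is standard; but the classification of negative curves is where the special geometry of points on a conic enters decisively, through the interaction of the genus bound with the constraint $D \cdot C \ge 0$. One should also treat the small cases $s \le 4$ separately, where $C$ has non-negative self-intersection and $X_s^2$ is del Pezzo, noting that the stated classes still generate the effective cone there although $C$ becomes redundant.
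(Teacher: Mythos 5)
Your proof is correct, but it is necessarily a different route from the paper's, because the paper gives no proof of this statement at all: Theorem \ref{EffP2} is quoted from \cite{CT}, Theorem 1.2, where the planar case is a by-product of a far stronger result --- the blow-up of $\pp^n$ at arbitrarily many points of a rational normal curve is a Mori dream space with an explicit set of Cox ring generators --- proved by induction on dimension via restriction to exceptional divisors (the machinery the paper recalls in Section \ref{CT construction}). Your argument is elementary and purely two-dimensional: the structure theorem for the pseudoeffective cone of a surface, the classification of negative curves, and self-duality of the forward cone $Q^+$. The two key computations are sound: for an irreducible $D$ of degree $d\ge 3$ the genus bound plus $D\cdot C\ge 0$ gives $\sum_i m_i^2\le (d-1)(d-2)+2d$, hence $D^2\ge d-2>0$; and in the dual-cone estimate the case $b_1:=\max_i b_i\ge a/2$ closes because $\sum_i b_i^2\le 2b_1^2-3ab_1+2a^2\le a^2$, as $(2b_1-a)(b_1-a)\le 0$ (this uses $b_1\le a$, hence $s\ge 2$). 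You could even avoid Zariski decomposition: $\Eff$ is generated by classes of irreducible curves, each of which lies in $Q^+$ or on one of your negative rays, while $Q^+\subseteq\overline{\Eff}$ because classes with $x^2>0$ and $x\cdot H>0$ are big. The trade-off is clear: your proof is self-contained and short but yields only the planar effective cone, whereas \cite{CT} provides the Cox-ring/Mori-dream-space statements in every dimension that the rest of the paper actually leans on (e.g.\ the restriction formula \eqref{indstep}, which is not a consequence of the planar cone statement). One small slip in your closing remark: the listed classes do \emph{not} generate the effective cone in the degenerate case $s=1$, since $H-E_1\in\Eff(X^2_1)$ is not a nonnegative combination of $E_1$ and $2H-E_1$ (correspondingly, your dual-cone inequality fails there, e.g.\ for $(a,b_1)=(1,2)$); for $2\le s\le 4$ the remark is correct, and the paper only ever applies the theorem with $s\ge 5$.
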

Recall that divisors in $\textrm{Pic}(X^2_s)$ satisfy the following intersection table
\begin{equation} \label{intersectiontable}
H^2= 1,  \quad \quad H \cdot E_i=0, \quad E_i \cdot E_j  = -\delta_{ij},
\end{equation}
for every $ i,j \in \{1, \ldots, s\}$.
Since the nef cone of $X^2_s$ is dual to the effective cone with respect to \eqref{intersectiontable}, as an immediate corollary of Theorem \ref{EffP2}  we shall list the defining inequalities of the former in the coordinates $d,m_1,\dots,m_s$ of the N\'eron-Severi space. 
\begin{corollary} \label{NefCone}
Let $X_s^2$ be the blow-up of $\mathbb{P}^2$ along $s$ points on a conic $C$ and let $D=dH - \sum_{i=1}^s m_i E_i$ be any divisor on $X$. Then $D$ is nef if and only if it satisfies the following inequalities
\begin{enumerate}
\item $m_i\ge 0$, for every $1\le i \le s$,
\item $d-m_i-m_j\geq 0$,  for $1\leq i < j \leq s$,
\item $2d - \sum_{i=1}^s m_i \geq 0$.
\end{enumerate}
\end{corollary}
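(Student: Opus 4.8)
The plan is to deduce the inequalities directly from the duality between the nef cone and the effective cone, combining Theorem \ref{EffP2}, which enumerates the extremal generators of $\Eff(X_s^2)$, with the intersection table \eqref{intersectiontable}, which lets me pair $D$ against each generator. Since the effective cone is closed (indeed finitely generated by the three families listed in Theorem \ref{EffP2}), a divisor $D$ has non-negative intersection with every effective class if and only if it has non-negative intersection with each of those generators. Thus $D$ is nef precisely when $D\cdot Z\ge 0$ for $Z$ running over the generating set, and it remains only to compute these three pairings explicitly.

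First I would record the two elementary intersection numbers $D\cdot H=d$ and $D\cdot E_i=m_i$. Both follow immediately by expanding $D=dH-\sum_k m_k E_k$ and applying \eqref{intersectiontable}, i.e. $H^2=1$, $H\cdot E_i=0$ and $E_k\cdot E_i=-\delta_{ki}$; the sign coming from $E_i\cdot E_i=-1$ is the only place where a little care is needed, since it produces $D\cdot E_i=-m_i\,(E_i\cdot E_i)=m_i$. With these in hand, testing $D$ against the generators is routine: against $E_i$ one obtains $D\cdot E_i=m_i$, giving inequality (1); against $H-E_i-E_j$ one obtains $d-m_i-m_j$, giving inequality (2); and against $2H-\sum_{i=1}^s E_i$ one obtains $2d-\sum_{i=1}^s m_i$, giving inequality (3). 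Requiring all three quantities to be non-negative is exactly the asserted characterization of nefness, so the corollary follows.

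I do not expect any serious obstacle in this argument, as it is a direct dualization of Theorem \ref{EffP2}. The only two points warranting attention are the reduction from testing $D$ against all effective classes to testing it against the finite generating set (valid because $\Eff(X_s^2)$ is a finitely generated, hence closed, convex cone and nefness is defined by non-negative pairing on this cone), and the bookkeeping of the minus signs arising from the self-intersections $E_i\cdot E_i=-1$.
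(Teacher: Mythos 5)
Your proposal is correct and is essentially the paper's own argument: the paper derives the corollary as an immediate consequence of the duality between the nef cone and the (finitely generated, hence closed) effective cone of Theorem \ref{EffP2} with respect to the intersection table \eqref{intersectiontable}, and your explicit pairings $D\cdot E_i=m_i$, $D\cdot(H-E_i-E_j)=d-m_i-m_j$, $D\cdot(2H-\sum_i E_i)=2d-\sum_i m_i$ are exactly the computations the paper leaves implicit. No gaps; the two points you flag (reduction to the finite generating set and the sign from $E_i^2=-1$) are handled correctly.
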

 We will now show that nef divisors on $X^2_s$ have vanishing first cohomology group, as a consequence of the following lemma.
\begin{lemma} \label{HarLemma} (\cite{Har2}, Lemma II.7)
Let $X$ be a smooth projective rational surface and let $\mathcal{N}$ be the class of an effective divisor without fixed components. If $\mathcal{N}\cdot K_X < 0$, then $h^1(X, \mathcal N)=0$, where $K_X$ denotes the canonical divisor of $X$.
\end{lemma}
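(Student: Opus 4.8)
The plan is to combine the cohomological vanishing available on a rational surface with a restriction-to-a-general-member argument, reducing the statement to the vanishing of the sections of a negative-degree line bundle on a curve. First I would record the numerical input. Since $X$ is rational we have $h^1(X,\mathcal O_X)=h^2(X,\mathcal O_X)=0$ and $\chi(\mathcal O_X)=1$, and by Serre duality also $h^0(X,K_X)=h^1(X,K_X)=0$. The key preliminary observation is that $\mathcal N$ is \emph{nef}: for any irreducible curve $\Gamma$, the hypothesis that $|\mathcal N|$ has no fixed component produces a member $D\in|\mathcal N|$ with $\Gamma\not\subseteq D$, whence $\mathcal N\cdot\Gamma=D\cdot\Gamma\ge 0$; in particular $\mathcal N^2\ge 0$. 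This immediately yields $h^2(X,\mathcal N)=0$: by Serre duality $h^2(X,\mathcal N)=h^0(X,K_X-\mathcal N)$, and if $K_X-\mathcal N$ were effective then nefness would force $\mathcal N\cdot(K_X-\mathcal N)\ge 0$, i.e. $\mathcal N\cdot K_X\ge \mathcal N^2\ge 0$, contradicting the hypothesis $\mathcal N\cdot K_X<0$.

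Next I would fix a general member $C\in|\mathcal N|$, which is reduced by Bertini's theorem in characteristic zero since there are no fixed components, and restrict to it. From the structure sequence $0\to\mathcal O_X\to\mathcal O_X(\mathcal N)\to\mathcal O_C(\mathcal N)\to 0$ together with the vanishing $h^1(\mathcal O_X)=h^2(\mathcal O_X)=0$ one obtains an isomorphism
\[
H^1(X,\mathcal N)\cong H^1(C,\mathcal O_C(\mathcal N)).
\]
Since $C$ is a Cartier divisor on a smooth surface it is Gorenstein, with dualizing sheaf $\omega_C=\mathcal O_C(K_X+\mathcal N)$ by adjunction, so Serre duality on $C$ gives
\[
H^1(C,\mathcal O_C(\mathcal N))\cong H^0(C,\mathcal O_C(K_X))^{*}.
\]
Thus the whole statement reduces to proving that the line bundle $\mathcal O_C(K_X)$, which has total degree $K_X\cdot\mathcal N<0$ on $C$, carries no nonzero global section.

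The main obstacle is precisely this last step when $C$ is reducible: negativity of the \emph{total} degree does not by itself preclude sections supported on subcurves of nonnegative $K_X$-degree. I would resolve it using the standard fact that $H^0(C,L)=0$ whenever $L\cdot C'<0$ for every nonzero subcurve $C'\le C$, so that it suffices to establish $K_X\cdot C'<0$ for all such $C'$. This is where the two hypotheses are genuinely used together: the absence of fixed components forces every component of the general member to move, while the nefness of $\mathcal N$ controls the intersection pattern of these components, allowing one to propagate the global inequality $K_X\cdot\mathcal N<0$ down to each subcurve (peeling off, if necessary, the components of nonnegative $K_X$-degree and inducting on the number of components). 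Establishing this numerical-connectedness-type statement for the general member is the technical heart of the argument; once it is in place, the vanishing $H^0(C,\mathcal O_C(K_X))=0$, and therefore $h^1(X,\mathcal N)=0$, follow at once.
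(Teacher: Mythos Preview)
The paper does not prove this lemma; it is quoted from \cite{Har2} without argument and used only to deduce Proposition~\ref{Vanishingh1}. So there is no in-paper proof to compare against, and the question is simply whether your outline stands on its own.

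Everything up to the identification $h^1(X,\mathcal N)=h^0\bigl(C,\mathcal O_C(K_X)\bigr)$ for a general $C\in|\mathcal N|$ is correct: nefness of $\mathcal N$, the vanishing of $h^2$, the restriction sequence, and Serre duality on the Gorenstein curve $C$ are all handled properly.

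The last paragraph, however, is not a proof. The ``peeling off'' of components with $K_X\cdot C_i\ge 0$ is never justified: you do not explain why such a step preserves any usable hypothesis or why the induction goes through, and the intermediate assertion that $K_X\cdot C'<0$ for \emph{every} subcurve $C'\le C$ is stated but not argued. What actually closes the gap cleanly is Bertini's irreducibility theorem. Over $\mathbb C$, a linear system without fixed components on a smooth projective surface either (i) has irreducible general member, or (ii) is composite with a pencil, in which case the general member is a reduced sum $C=C_1+\cdots+C_r$ of irreducible curves that are all numerically equivalent to a single class $\mathcal M$. In case (i), $\mathcal O_C(K_X)$ has negative degree on the integral curve $C$ and hence no sections. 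In case (ii), $K_X\cdot\mathcal N=r\,(K_X\cdot\mathcal M)<0$ forces $K_X\cdot C_i<0$ for every component, so any section of $\mathcal O_C(K_X)$ restricts to zero on each $C_i$ and therefore vanishes on the reduced curve $C$. Either way $h^0\bigl(C,\mathcal O_C(K_X)\bigr)=0$, and with this your argument is complete. The intuition you record---that every component of the general member moves---is a corollary of this dichotomy rather than an independent ingredient.
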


\begin{proposition} \label{Vanishingh1}
If $D$ is a nef divisor on $X_s^2$, then $h^1(X_s^2,D)=0$.
\end{proposition}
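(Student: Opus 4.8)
The plan is to reduce the vanishing of $h^1$ for a nef divisor $D$ on $X_s^2$ to an application of Lemma \ref{HarLemma}. Since $X_s^2$ is a smooth projective rational surface (the blow-up of $\PP^2$ at finitely many points), it suffices to verify the two hypotheses of that lemma: that $D$ is the class of an effective divisor without fixed components, and that $D\cdot K_{X_s^2}<0$. I would treat these two conditions in turn, with the intersection-number condition being the computational heart of the argument.

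First I would record the canonical class. On $X_s^2$ one has $K_{X_s^2}=-3H+\sum_{i=1}^s E_i$, so using the intersection table \eqref{intersectiontable} I compute, for $D=dH-\sum_{i=1}^s m_iE_i$,
\begin{equation*}
D\cdot K_{X_s^2}=-3d+\sum_{i=1}^s m_i.
\end{equation*}
Thus the condition $D\cdot K_{X_s^2}<0$ is exactly $\sum_{i=1}^s m_i<3d$. The next step is to deduce this strict inequality from the nefness inequalities of Corollary \ref{NefCone}. The natural way is to combine the inequality $2d-\sum_{i=1}^s m_i\ge 0$ from Corollary \ref{NefCone}(3) with a line inequality $d-m_i-m_j\ge 0$ from Corollary \ref{NefCone}(2): adding them gives $3d-\sum m_i\ge m_i+m_j\ge 0$, and one then argues the inequality is strict in the relevant cases. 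Here I would be careful about degenerate situations where $s$ is small or where several $m_i$ vanish, and separate off the trivial case $D=0$ (or $d=0$), for which $h^1=0$ holds for dimension reasons anyway.

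For the first hypothesis, that $D$ is effective without fixed components, the cleanest route is to observe that a nef divisor on a Mori dream space (or, concretely here, on the surface $X_s^2$ whose effective cone is explicitly generated by Theorem \ref{EffP2}) is base-point free or at least effective with no fixed part. More precisely, since the nef cone is dual to the finitely generated effective cone, a nef class $D$ pairs non-negatively against every extremal effective generator, and in particular against each $E_i$, each $H-E_i-E_j$, and $2H-\sum E_i$; this forces any would-be fixed component, which must be one of these extremal curves, to contribute nonnegatively, ruling out a fixed part. I would phrase this as: a fixed component $N$ of $|D|$ would be an extremal effective curve with $D\cdot N<0$, contradicting nefness, so $D$ has no fixed components; effectivity follows since a nonzero nef divisor on a rational surface with $D\cdot K_{X}<0$ has $h^0>0$ by Riemann–Roch once $h^1$ is controlled, or simply because nef divisors on these surfaces are semi-ample.

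The main obstacle I anticipate is handling the logical interplay between effectivity and the vanishing itself: Lemma \ref{HarLemma} requires $D$ effective with no fixed components as an input, yet the most natural proof of effectivity might want to use Riemann–Roch together with $h^1=0$, which is circular. I would resolve this by establishing effectivity independently—exploiting that the effective cone of $X_s^2$ is explicitly described and that nef implies a nonnegative pairing with all effective generators, hence nef divisors lie in the effective cone and are semi-ample—so that the absence of fixed components is a purely cone-theoretic statement rather than a cohomological one. Once both hypotheses are in place, the conclusion $h^1(X_s^2,D)=0$ is immediate from Lemma \ref{HarLemma}.
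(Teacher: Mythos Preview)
Your approach coincides with the paper's: compute $D\cdot K_{X_s^2}=\sum_i m_i-3d$ and invoke Lemma~\ref{HarLemma}. The paper, however, obtains strict negativity more cleanly than you do: Corollary~\ref{NefCone}(3) alone gives $\sum_i m_i\le 2d$, whence $D\cdot K_{X_s^2}\le 2d-3d=-d<0$ whenever $d>0$ (and $d=0$ forces $D=0$, where $h^1$ vanishes trivially). There is no need to combine conditions (2) and (3) or to argue strictness as a separate step.

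One caution on your treatment of the other hypothesis: the claim that a fixed component $N$ of $|D|$ must satisfy $D\cdot N<0$ is not correct in general---an irreducible curve can be the entire fixed part of its own complete linear series while having nonnegative self-intersection. The paper's proof actually passes over the ``effective without fixed components'' hypothesis of Lemma~\ref{HarLemma} in silence. If you wish to be rigorous here, your alternative route via semi-ampleness on a Mori dream space is the right direction, but note that semi-ampleness of $D$ gives base-point freeness only for some multiple $mD$, so a small additional argument is still needed to conclude that $|D|$ itself has no fixed part.
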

\begin{proof} Consider a nef divisor $D=dH-\sum m_i E_i$  and
recall that the canonical divisor of $X_s^2$ is
$ K_{X_s^2} = -3H + \sum E_i $.
Using \eqref{intersectiontable},  we obtain
$$ D \cdot K_{X_s^2}= \sum_{i=1}^s m_i -3d \leq - d < 0,$$
where the first inequality follows from Corollary \ref{NefCone}.
By Lemma \ref{HarLemma}, we conclude that $h^1(X_s^2,D)=0$.
\end{proof}
\begin{remark} \label{G(nefD)}
Notice that if $D$ is nef, then $k_C(D) \leq 0$ and $k_{ij}(D) \leq 0$, for every $1\leq i < j \leq s$. Therefore $G(D)$ as defined in (\ref{G(D)}) reduces to
$$G(D) = \binom{d+2}{2} - \sum_{i=1}^s \binom{m_i+1}{2} =  \text{\Large{$\chi$}}(X_s^2,D). $$
Therefore, by Proposition \ref{Vanishingh1},  for nef divisors we have $h^0(X_s^2, D) = G(D)$, so that Theorem \ref{ThmC} holds in this case.
\end{remark}
We will now  show that Theorem \ref{ThmC} holds for non-redundant effective and not necessarily nef divisors, by reducing any such divisor to a nef one, and showing that the formula $G(D)$ is preserved by this reduction.

\begin{lemma} \label{lemmakij}
Let $D=dH - \sum_{i=1}^s m_i E_i$ be an effective divisor on $X_s^2$, with $m_i \geq 0$ for every $i \in \{1, \ldots, s\}$. Then, for any $1\leq j_1 < j_2 \leq s$,
$$D' = D - k_{j_1j_2}(D)^+ (H- E_{j_1} - E_{j_2}), $$
is an effective divisor with $m_i' \geq 0$, for every $i \in \{1, \ldots, s\}$, and such that
\begin{align*}
\text{(a)} & \;\; k_{j_1j_2}(D')  = 0,  \\[2mm]
\text{(b)} & \;\; k_{i_1 i_2}(D')^+ = k_{i_1 i_2}(D)^+  \; \; \text{for any $1\leq i_1 < i_2 \leq s$, such that $(i_1, i_2) \neq (j_1, j_2)$},   \\[2mm]
\text{(c)} &\;\; k_C(D') = k_C(D).
\end{align*}
In addition, we have that
$$G(D') = G(D). $$
\end{lemma}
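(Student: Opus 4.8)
The plan is to follow how the three pieces of data $k_{j_1j_2}$, the remaining $k_{i_1i_2}$, and $k_C$ transform under the subtraction, and then to feed this into the definition \eqref{G(D)} of $G$. Writing $k := k_{j_1j_2}(D)^+$, the class $D'$ has degree $d' = d-k$ and multiplicities $m'_{j_1} = m_{j_1}-k$, $m'_{j_2} = m_{j_2}-k$, while $m'_i = m_i$ for $i \ne j_1, j_2$. If $k=0$ there is nothing to prove, so I assume $k = m_{j_1}+m_{j_2}-d > 0$. Effectiveness of $D'$ is immediate from Corollary \ref{cor-multjoins}: the line $L_{\{j_1,j_2\}}$, of class $H-E_{j_1}-E_{j_2}$, lies in $\Bs|D|$ with multiplicity at least $k$, so removing it $k$ times keeps the class effective. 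For the nonnegativity $m'_{j_1} = d-m_{j_2}\ge 0$ and $m'_{j_2}=d-m_{j_1}\ge 0$ I will use that a nonzero effective plane curve of degree $d$ has multiplicity at most $d$ at every point, whence $d\ge m_i$ for all $i$ because $D$ is effective.

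The identities (a) and (c) are then pure bookkeeping: $k_{j_1j_2}(D') = (m_{j_1}-k)+(m_{j_2}-k)-(d-k) = k_{j_1j_2}(D)-k = 0$, while $\sum_i m'_i - 2d' = (\sum_i m_i - 2k)-(2d-2k) = \sum_i m_i - 2d$ is unchanged, so both $k_C$ and the exceeding number $\varepsilon$ are preserved. Property (b) splits into two cases. When $\{i_1,i_2\}$ meets $\{j_1,j_2\}$ in exactly one index, the simultaneous drop of $d$ and of one multiplicity by $k$ cancel, giving $k_{i_1i_2}(D') = k_{i_1i_2}(D)$ exactly. When $\{i_1,i_2\}$ is disjoint from $\{j_1,j_2\}$, instead $k_{i_1i_2}(D') = k_{i_1i_2}(D)+k$, so I must show that both sides are $\le 0$; since $k>0$ this amounts to the single inequality $m_{i_1}+m_{i_2}+m_{j_1}+m_{j_2}\le 2d$.

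This inequality is the one genuinely geometric point and the step I expect to be the main obstacle. I will prove it by intersecting a member $F$ of $|D|$, viewed as a plane curve of degree $d$, with the conic $C$ through the $s$ points. Writing $F = aC+F_0$ with $a=\operatorname{ord}_C(F)\ge 0$ and $C\not\subseteq F_0$, additivity of multiplicities gives $\operatorname{mult}_{p_k}(F_0) = \operatorname{mult}_{p_k}(F)-a \ge m_k - a$ (using that $C$ is smooth, hence of multiplicity $1$, at each $p_k$). Since $F_0$ and $C$ share no component, Bezout yields, for the four distinct points $S=\{i_1,i_2,j_1,j_2\}$,
\[
2d-4a = 2\deg F_0 = F_0\cdot C \ge \sum_{k\in S}\operatorname{mult}_{p_k}(F_0) \ge \sum_{k\in S}(m_k-a) = \Big(\sum_{k\in S}m_k\Big)-4a,
\]
and the $4a$ cancel, giving exactly $\sum_{k\in S}m_k\le 2d$ (the degenerate case $F=aC$ is handled directly, as then each $m_k\le a$ and $d=2a$). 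Hence $k_{i_1i_2}(D)\le -k<0$ and $k_{i_1i_2}(D')\le 0$, so both have vanishing positive part and (b) follows, using that $\binom{x}{2}=\binom{x^+}{2}$ under the paper's conventions.

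Finally I will deduce $G(D')=G(D)$ from \eqref{G(D)} term by term. By (c) the entire $k_C$-block is unchanged; by (b) every summand $\binom{k_{i_1i_2}}{2}$ with $(i_1,i_2)\ne(j_1,j_2)$ is unchanged, while the $(j_1,j_2)$-summand drops from $\binom{k}{2}$ to $0$ by (a). What remains is the purely arithmetic identity
\[
\binom{d-k+2}{2}-\binom{d+2}{2}+\binom{m_{j_1}+1}{2}-\binom{m_{j_1}-k+1}{2}+\binom{m_{j_2}+1}{2}-\binom{m_{j_2}-k+1}{2} = \binom{k}{2},
\]
which I will verify as a polynomial identity in $d,m_{j_1},m_{j_2}$ after substituting $k=m_{j_1}+m_{j_2}-d$. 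This is legitimate because every binomial argument here is nonnegative (in particular $d'+2,\,m'_{j_1}+1,\,m'_{j_2}+1\ge 0$ by the first paragraph), so the conventional binomials agree with $\tfrac12 x(x-1)$; expanding both sides then confirms the identity and completes the proof.
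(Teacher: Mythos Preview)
Your proof is correct and follows essentially the same route as the paper's: direct computation of (a), (c), the two subcases of (b), and a binomial identity for $G(D')=G(D)$. The one substantive difference is that for the disjoint case of (b) the paper simply asserts $m_1+m_2+m_i+m_j\le 2d$ ``because $D$ is effective,'' whereas you supply the Bezout-with-the-conic argument (including the $F=aC+F_0$ decomposition) that actually justifies this; your version is more complete here. For the final identity $G(D')=G(D)$, the paper uses the decomposition $\binom{a+b+1}{2}=\binom{a+1}{2}+\binom{b+1}{2}+ab$ while you expand directly as a polynomial in $d,m_{j_1},m_{j_2}$; these are equivalent and your observation that all binomial arguments are nonnegative, so the conventional and polynomial values agree, is exactly what is needed.
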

Before proving the lemma, we recall the following identity that we will make use of extensively: for $a,b\in\mathbb{N}$, we have
\begin{equation} \label{TriangularNumbers}
\binom{a + b+1}{2} = \binom{a+1}{2}+ \binom{b+1}{2} +ab.
\end{equation}

\begin{proof}[Proof of Lemma \ref{lemmakij}] After reordering the indices if needed, we may assume that $(j_1, j_2) = (1,2)$. We first notice that if $k_{12}(D)^+=0$, then $D'=D$ so that the lemma follows immediately. Thus, let us assume that $k_{12}(D)>0$ and, for simplicity, set $k_{12} = k_{12}(D)$.
We recall that $k_{12}$ corresponds to a lower bound for the multiplicity of containment of the line $L_{12}$ in the base locus of $|D|$. This implies that $H- E_1 - E_2$ is a fixed component of $|D|$ and, in particular, that
\begin{align*}
 D' & = D - k_{12}(H-E_1 - E_2) \\
	& = (d-k_{12}) H - (m_1 - k_{12})E_1 - (m_2 - k_{12}) E_2 - \sum_{i=3}^s m_i E_i 
\end{align*}
is still effective.

Now, for $i\in \{3, \ldots, s\}$ we have that $m_i ' = m_i \geq 0$ by hypothesis. If $i=1$,
$$m_1' = m_1 - k_{12} = m_1 - (m_1 + m_2 -d) = d-m_2 \geq 0, $$
by the effectivity of $D$. Similarly, we get $m_2' = d-m_1 \geq 0$. 
Moreover, it is straightforward to check that
\begin{itemize}
\item[(a)] $k_{12}(D') = (m_1 - k_{12}) + (m_2 - k_{12}) - (d-k_{12}) = m_1 + m_2 - d - k_{12} = 0. $

\item[(b)] For $i \in \{1,2\}$ and $j\in \{3, \ldots, s\}$, we have that
$$ k_{ij}(D') = (m_i - k_{12}) + m_j - (d-k_{12}) = m_i + m_j - d = k_{ij}.$$
Otherwise, if $i, j \in \{3, \ldots, s\}$, we have that
$$ k_{ij}(D') = m_i + m_j - (d-k_{12}) = m_1 + m_2 + m_i + m_j - 2d \leq 0, $$
where the last inequality holds because $D$ is effective.
Since we are assuming that $k_{12} >0$, then we must have $$k_{ij}(D) = m_i + m_j -d \leq 0. $$
Thus, we have that $k_{ij}(D')^+= k_{ij}(D)^+ =0$ in this case.

\item[(c)] Notice that
\begin{align*} \sum_{i=1}^2 m_i' - 2d' &= (m_1 - k_{12}) + (m_2 - k_{12}) + \sum_{i=3}^s m_i - 2(d-k_{12}) \\
								& = \sum_{i=1}^s m_i - 2d,
\end{align*}
which implies that $k_C(D') = k_C(D)$.
\end{itemize}
In addition, applying (\ref{TriangularNumbers}) we have that
 $$ \binom{d-k_{12} +2}{2} = \binom{d+2}{2}  - \binom{k_{12} + 1}{2} - (d-k_{12}+1)k_{12}, $$
and
$$\binom{m_i - k_{12}+1}{2} = \binom{m_i +1}{2} - \binom{k_{12} +1}{2} - (m_i - k_{12})k_{12}. $$
Putting everything together, we obtain that
$$ \binom{d-k_{12} +2}{2}  - \binom{m_1 - k_{12}+1}{2} - \binom{m_2 - k_{12} +1}{2} $$
$$ = \binom{d+2}{2} - \binom{m_1 +1}{2} - \binom{m_2 +1}{2} + \left( \binom{k_{12} +1}{2} - k_{12} \right) +  \left((m_1 + m_2-d)k_{12} -k_{12}^2 \right)$$
$$ =  \binom{d+2}{2} - \binom{m_1 +1}{2} - \binom{m_2 +1}{2} +  \binom{k_{12}}{2} + 0. $$
This, together with (a), (b) and (c), implies that
$G(D') = G(D).$
\end{proof}

\begin{lemma} \label{lemmakC}
Let $D=dH - \sum_{i=1}^s m_i E_i$ be a non-redundant effective divisor on $X_s^2$. Then, 
$$D' = D - k_C(D)^+ (2H- \sum_{i=1}^s E_i), $$
is an effective divisor with $m_i' \geq 0$, for every $i \in \{1, \ldots, s\}$, and such that
\begin{align*}
\text{(a)} & \;\; k_{ij}(D') = k_{ij}(D) \; \; \text{for every $1\leq i < j \leq s$},   \\[2mm]
\text{(b)} &\;\; k_C(D') \leq 0. 
\end{align*}
In addition, we have that
$$G(D') = G(D). $$
\end{lemma}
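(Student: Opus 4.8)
The plan is to follow the same template as Lemma \ref{lemmakij}, with the fixed line $H - E_{j_1} - E_{j_2}$ replaced by the fixed conic $2H - \sum_{i=1}^s E_i$. First I would dispose of the degenerate case: if $k_C(D)^+ = 0$ then $D' = D$ and there is nothing to prove, so from now on I assume $k_C := k_C(D) > 0$. By Lemma \ref{boundkc} the conic $C$ is contained in the base locus of $|D|$ with multiplicity at least $k_C$, so $2H - \sum_i E_i$ is a fixed component of $|D|$ of multiplicity $k_C$ and therefore
\[
D' = (d - 2k_C)H - \sum_{i=1}^s (m_i - k_C)E_i
\]
is still effective. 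Non-redundancy of $D$ forces $m_i \ge k_C$ for every $i$ (Remark \ref{redundant}), whence each $m_i' = m_i - k_C \ge 0$.

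Part (a) is then immediate and in fact holds as an equality of integers: subtracting the conic lowers $d$ by $2k_C$ and every $m_i$ by $k_C$, so $k_{ij}(D') = (m_i - k_C) + (m_j - k_C) - (d - 2k_C) = m_i + m_j - d = k_{ij}(D)$. For part (b) I would compute $\sum_i m_i' - 2d' = \big(\sum_i m_i - 2d\big) - (s-4)k_C$ and then insert the defining relation $\sum_i m_i - 2d = (s-4)k_C - \varepsilon$ read off from \eqref{kC with epsilon} with $n = 2$, obtaining $\sum_i m_i' - 2d' = -\varepsilon$. Since $\varepsilon \in \{0, \ldots, s-5\}$ we have $-1 < -\varepsilon/(s-4) \le 0$, so $k_C(D') = \lceil -\varepsilon/(s-4)\rceil = 0 \le 0$.

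The substance of the lemma, and the step I expect to require genuine computation, is the equality $G(D') = G(D)$. By (a) the line block $\sum_{i<j}\binom{k_{ij}}{2}$ is unchanged, and since $k_C(D') = 0$ the whole conic block of $G(D')$ vanishes (each of $\binom{0}{2}$, $\binom{1}{2}$, $\binom{0}{1}$ is zero). Hence it suffices to check that
\[
\Big[\binom{d+2}{2} - \binom{d-2k_C+2}{2}\Big] - \sum_{i=1}^s\Big[\binom{m_i+1}{2} - \binom{m_i-k_C+1}{2}\Big] + \binom{k_C}{2} + (s-5)\binom{k_C+1}{2} - \varepsilon k_C = 0.
\]
To do this I would expand each difference of binomials with the identity \eqref{TriangularNumbers}, using $\binom{d+2}{2} = \binom{d-2k_C+2}{2} + \binom{2k_C+1}{2} + (d-2k_C+1)(2k_C)$ and $\binom{m_i+1}{2} = \binom{m_i-k_C+1}{2} + \binom{k_C+1}{2} + (m_i-k_C)k_C$. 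Collecting the $\binom{k_C+1}{2}$ and $\binom{k_C}{2}$ contributions and simplifying, the left-hand side collapses to $k_C\big[-(\sum_i m_i - 2d) + (s-4)k_C - \varepsilon\big]$, which is $0$ by the same relation $\sum_i m_i - 2d = (s-4)k_C - \varepsilon$ used for (b). The main obstacle is thus purely the binomial bookkeeping in this last identity, but it is entirely mechanical once \eqref{TriangularNumbers} is applied, exactly as in Lemma \ref{lemmakij}.
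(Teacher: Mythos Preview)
Your proof is correct and follows essentially the same template as the paper's: dispose of the case $k_C^+=0$, use Lemma~\ref{boundkc} and non-redundancy for effectivity and $m_i'\ge 0$, check~(a) by direct substitution, and obtain $G(D')=G(D)$ via the expansions from~\eqref{TriangularNumbers} combined with $\sum_i m_i - 2d = (s-4)k_C - \varepsilon$. The only notable difference is in part~(b): the paper first discards the points with $m_i=k_C$ (so $m_i'=0$) and works on $X_r^2$, whereas you stay on $X_s^2$ and compute $\sum_i m_i' - 2d' = -\varepsilon$ directly, giving $k_C(D')=\lceil -\varepsilon/(s-4)\rceil=0$. Your route is cleaner and in fact yields the sharper conclusion $k_C(D')=0$ (rather than just $\le 0$), which you then use to see at once that the entire conic block of $G(D')$ vanishes.
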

\begin{proof} If $k_C(D)^+ = 0$, then $D'=D$ and the lemma follows. Thus, let us assume that $k_C(D) >0$ and, for simplicity let us set $k_C = k_C(D)$. Take 
$$
D' = D - k_C (2H - \sum_{i=1}^s m_i E_i)
	 = (d-2k_C)H - \sum_{i=1}^s (m_i - k_C) E_i,
$$
which is effective since $k_C$ is a lower bound for the multiplicity of containment of $C$ in the base locus of $|D|$. Moreover, the non-redundancy of $D$ implies that
$ m_i' = m_i - k_C \geq 0$, for every $i \in \{1, \ldots, s\}$.
It is simple to see that
\begin{itemize}
\item[(a)] For every $1 \leq i < j \leq s$,
$$k_{ij}(D') = (m_i - k_C) + (m_j - k_C) - (d-2k_C) = m_i + m_j -d = k_{ij}(D).$$
\item[(b)] Let us assume without loss of generality that $m_1 \geq \ldots m_r >k_C$ and $m_{r+1} = \ldots =m_s = k_C$. By definition, we have that, if $\varepsilon = \varepsilon(D)$ is the exceeding number of $D$, then
$$ k_C = k_C(D) = \frac{ \sum_{i=1}^r m_i +(s-r)k_C - 2d + \varepsilon}{s-4}$$
which implies that
$$ \sum_{i=1}^r m_i -2d = (r-4)k_C -\varepsilon $$
that is,
$$ \sum_{i=1}^r (m_i-k_C) -2d = -4k_C -\varepsilon <0. $$
Hence, 
$$k_C(D') = \left\lceil \frac{\sum_{i=1}^r(m_i -k_C) -2d}{r-4} \right\rceil \leq 0. $$
\end{itemize}
Finally, we observe that, by applying (\ref{TriangularNumbers}) twice, we have that
\begin{align*}
 \binom{d-2k_C+2}{2} &= \binom{d+2}{2} - \binom{2k_C +1}{2} - (d-2k_C +1)2k_C \\
				& = \binom{d+2}{2} - \left( 2\binom{k_C+1}{2} + k_C^2 \right) - (d-2k_C+1)2k_C
\end{align*}
and also, for every $i \in \{1, \ldots, s\}$
$$ \binom{m_i -k_C + 1}{2} = \binom{m_i +1}{2} - \binom{k_C +1}{2} - (m_i -k_C)k_C. $$
Thus,
$$ \binom{d-2k_C+2}{2} - \sum_{i=1}^s \binom{m_i -k_C +1}{2} $$
\begin{equation} \label{-kcC} = \binom{d+2}{2} - \sum_{i=1}^s \binom{m_i+1}{2} + (s-2) \binom{k_C +1}{2} -k_C^2 -2k_C \end{equation}
$$ +\left (\sum_{i=1}^s m_i -2d\right)k_C -(s-4)k_C^2.$$
On the one hand, we have
$$ \sum_{i=1}^s m_i -2d = (s-4)k_C -\varepsilon, $$
where $\varepsilon$ is the exceeding number of $D$, 
which implies that
$$ \left (\sum_{i=1}^s m_i -2d\right)k_C -(s-4)k_C^2 = - \varepsilon k_C = -\varepsilon \binom{k_C}{1}.$$
On the other hand, we have
\begin{align*}
(s-2) \binom{k_C+1}{2} - k_C^2 -2k_C & = (s-5)\binom{k_C+1}{2} + \left( 3\binom{k_C+1}{2} - k_C^2 -2k_C \right) \\
							&= (s-5)\binom{k_C +1}{2} + \binom{k_C}{2}.
\end{align*}
Hence, equation (\ref{-kcC}) reads as follows
\begin{align*}
 \binom{d-2k_C+2}{2} - \sum_{i=1}^s \binom{m_i -k_C +1}{2} = & \binom{d+2}{2} - \sum_{i=1}^s \binom{m_i+1}{2} \\
			& \; +\binom{k_C}{2} + (s-5)\binom{k_C+1}{2} - \varepsilon \binom{k_C}{1} 
\end{align*}
This, together with parts (a) and (b) of the lemma, implies that
$ G(D') = G(D). $
\end{proof}

\begin{propA} \label{ThmdimP2}
Let $D=dH-\sum_{i=1}^s m_i E_i$ be a non-redundant effective divisor on $X_s^2$, the blow-up of $\mathbb{P}^2$ at $s\geq 5$ points on a smooth conic $C$. Then 
$h^0(X_s^2, D)= G(D)$.
\end{propA}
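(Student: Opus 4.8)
The plan is to prove Proposition \ref{ThmdimP2} by reducing an arbitrary non-redundant effective divisor $D$ on $X_s^2$ to a \emph{nef} divisor, and then invoking the already-established formula for nef divisors. By Remark \ref{G(nefD)}, we know that for a nef divisor $N$ we have $h^0(X_s^2, N) = G(N)$, since $G(N)$ reduces to $\chi(X_s^2, N)$ and $h^1$ vanishes by Proposition \ref{Vanishingh1}. So the strategy is twofold: first, to exhibit an explicit reduction procedure that strips off the fixed components of $|D|$ (the excess multiplicities of the lines $L_{ij}$ and the conic $C$) one at a time, arriving at a nef divisor; and second, to verify that both $h^0$ and the function $G$ are invariant under each reduction step. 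The first half of the invariance is geometrically clear: removing a fixed component does not change the space of global sections, so $h^0$ is preserved. The second half — that $G$ is preserved — is precisely the content of Lemmas \ref{lemmakij} and \ref{lemmakC}, which have already been proved.

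\begin{proof}[Proof of Proposition \ref{ThmdimP2}]
First I would apply Lemma \ref{lemmakC} to subtract $k_C(D)^+$ times the conic class $2H - \sum_{i=1}^s E_i$, obtaining an effective divisor $D_0$ with $k_C(D_0) \leq 0$, with all $k_{ij}(D_0) = k_{ij}(D)$ unchanged, and with $G(D_0) = G(D)$. Since this class is a fixed component of $|D|$ with multiplicity $k_C(D)^+$ (by Lemma \ref{boundkc}), we have $h^0(X_s^2, D) = h^0(X_s^2, D_0)$ as well. Next I would repeatedly apply Lemma \ref{lemmakij}: for each pair $(j_1, j_2)$ with $k_{j_1 j_2} > 0$, subtract $k_{j_1 j_2}^+ (H - E_{j_1} - E_{j_2})$. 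By part (a) of that lemma this zeroes out $k_{j_1 j_2}$, by part (b) it leaves the other $k_{ij}^+$ untouched, and by part (c) it preserves $k_C$; crucially it also preserves $G$ and, being the removal of a fixed line, preserves $h^0$. After finitely many such steps I arrive at a divisor $D'$ with $k_{ij}(D') \leq 0$ for all $i<j$ and $k_C(D') \leq 0$.

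The point then is that such a $D'$ is nef. By Corollary \ref{NefCone}, nefness is equivalent to $m_i' \geq 0$ (guaranteed at each stage by the two reduction lemmas), $d' - m_i' - m_j' \geq 0$ (which is exactly $k_{ij}(D') \leq 0$), and $2d' - \sum m_i' \geq 0$ (which is exactly $k_C(D') \leq 0$, since the numerator of \eqref{multC} is nonpositive precisely when this holds). Hence $D'$ is nef, and by Remark \ref{G(nefD)} we have $h^0(X_s^2, D') = G(D')$. Chaining the equalities gives
\[
h^0(X_s^2, D) = h^0(X_s^2, D') = G(D') = G(D),
\]
as claimed.
\end{proof}

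The one subtlety I would watch carefully is the \textbf{termination and commutation} of the reduction steps. Applying Lemma \ref{lemmakij} to one pair could in principle re-create a positive $k_{ij}$ for another pair; however, part (b) guarantees the $k_{ij}^+$ of all other pairs are unchanged, and part (c) guarantees $k_C$ is unchanged, so the reductions genuinely decrease the (finite) number of pairs with positive $k_{ij}$ without disturbing the rest. Thus the process terminates after at most $\binom{s}{2}$ steps. Doing the conic reduction first via Lemma \ref{lemmakC} is convenient because that lemma leaves all $k_{ij}$ fixed, so the order (conic first, then lines) is clean and no interference arises. The main obstacle is therefore not any single computation — those are dispatched by the two lemmas — but rather the bookkeeping to confirm that the sequence of reductions does land on a genuinely nef divisor with all the $m_i'$ still nonnegative, which the lemmas have been set up precisely to handle.
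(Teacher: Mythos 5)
Your proposal is correct and follows essentially the same route as the paper's own proof: both reduce $D$ to a nef divisor by stripping off the fixed components $k_{ij}(D)^+(H-E_i-E_j)$ and $k_C(D)^+(2H-\sum_i E_i)$ via Lemmas \ref{lemmakij} and \ref{lemmakC}, observe that $h^0$ and $G$ are preserved, and conclude with Corollary \ref{NefCone} and Remark \ref{G(nefD)}. The only (immaterial) difference is bookkeeping: you perform the conic reduction first and the line reductions afterwards, while the paper subtracts all excess multiplicities at once, justified by the same invariance properties (a)--(c) of the two lemmas.
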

\begin{proof}
Let $D$ be a non-redundant effective divisor for which we also assume that $m_i \geq 0$ for every $i\in \{1, \ldots, s\}$. By recursively applying Lemma \ref{lemmakij} and Lemma \ref{lemmakC} we have that the divisor
$$ D' = D - \sum_{1\leq i < j \leq s} k_{ij}(D)^+(H-E_i - E_j) - k_C(D)^+(2H- \sum_{i_1}^s E_i) $$
is effective and it satisfies that
$m_i' \geq 0$, $k_{ij}(D') = m_i' + m_j' -d' \leq 0$ for every   $1\leq 1 < j \leq s$, and $k_C(D')  \leq 0$ (which implies $\sum m_i' -2d' \leq 0$). Thus $D'$ is nef by Corollary \ref{NefCone}. In addition, Lemma \ref{lemmakij} and Lemma \ref{lemmakC} imply that $G(D')=G(D)$.

We recall that, $k_{ij}(D)^+$ and $k_C(D)^+$ are lower bounds for the multiplicity of containment of $L_{ij}$ and $C$, respectively, in the base locus of $|D|$. Therefore, $D'$ is the result of removing from $D$ some fixed components, which means that 
$$ h^0(X_s^2, D) = h^0(X_s^2, D'). $$
To conclude, notice that, by Remark \ref{G(nefD)}, nefness of $D'$ implies that $h^0(X_s^2, D') = G(D')$. Altogether, we have that
$$  h^0(X_s^2, D) = h^0(X_s^2, D') = G(D') = G(D), $$
which concludes the proof.
\end{proof}

\subsection{The case $k_C^+=0$}\label{kC+=0}

In the notation of Section \ref{preliminaries}, given a divisor  $D = dH-\sum_{i=1}^sm_iE_i$ on $X^n_s$, we set 
\begin{equation}\label{ldim}
\ldim(D):=\sum_{I\subseteq \{1, \ldots, s\} } (-1)^{|I|} \binom{n+k_I(D)- |I|}{n}.
\end{equation}
We shall show that if $D$ is effective and non-redundant, with $k_C(D)^+=0$, then the dimension of its space of global sections is computed in formula \ref{ldim}.
\begin{remark}
Notice that $d\geq m_i$, which holds for effective divisors, and $\sum_{i=1}^sm_i-nd\le0$, which holds under the assumption $k_C(D)^+=0$, imply that for every $I \subset \{1, \ldots, s\}$,
$$ k_{I, \sigma_t} = \sum_{i\in I}m_i + tk_C - (|I| + t -1)d \leq 0,$$
 for any special cycle $J(I, \sigma_t)$ with $t\geq 1$. Therefore all terms involving $k_{I, \sigma_t}$ in the dimension formula \eqref{dim} vanish and in particular \eqref{dim} and \eqref{ldim} coincide. 
 In other terms, proving that the main theorem, Theorem \ref{ThmC}, holds for divisors with $k_C^+=0$ is equivalent to showing that they are \emph{linearly non-special}, according to the definition of \cite[Definition 3.2]{BDPLinear} generalised to the case of an arbitrary number of points on $C$.  \end{remark}

\begin{propA} \label{propB}
Let $D=dH-\sum_{i=1}^s m_i E_i$ be an effective divisor on $X_s^n$, such that $k_C(D)^+=0$. Then 
$H^0(X^n_s,D)=\ldim(D)$. 
\end{propA}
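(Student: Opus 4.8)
The plan is to argue by induction on the dimension $n$ of the ambient space, taking as inductive hypothesis the full dimension formula \eqref{dim} in dimension $n-1$ (for all non-redundant effective divisors, not only those with vanishing $k_C$). The base case $n=2$ is Proposition \ref{ThmdimP2}: when $k_C(D)^+=0$ the expression $G(D)$ collapses to $\ldim(D)$, since every binomial involving $k_C$ vanishes under the convention $\binom{a}{m}=0$ for $a<m$. The engine of the induction is the Castelnuovo restriction sequence \eqref{indstep}, which under the hypothesis $k_C(D)^+=0$ reads
\[
h^0(X_s^n,D)-h^0(X_s^n,D-E_1)=h^0(X_{s-1}^{n-1},l(D)),
\]
with $l(D)$ the projected divisor \eqref{lmap}.

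First I would establish the purely combinatorial identity
\[
\ldim(D)-\ldim(D-E_1)=\ldim(l(D)),
\]
matching the restriction sequence on the level of the function $\ldim$. Splitting the sum \eqref{ldim} according to whether $1\in I$, only the subsets containing $1$ contribute to the left-hand side, because passing from $D$ to $D-E_1$ raises the multiplicity at $p_1$ by one and hence leaves $k_I$ unchanged for $1\notin I$. For $I=\{1\}\cup J$ with $J\subseteq\{2,\dots,s\}$ the difference of the two binomials is handled by Pascal's rule $\binom{a}{n}-\binom{a+1}{n}=-\binom{a}{n-1}$, and the resulting expression is identified with $\ldim(l(D))$ using the two elementary facts $k_{\{1\}\cup J}(D)=k_J(l(D))$ (both equal $m_1+\sum_{i\in J}m_i-|J|d$) and the dimension shift $n+k_{\{1\}\cup J}(D)-|\{1\}\cup J|=(n-1)+k_J(l(D))-|J|$.

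Next I would run a downward induction on the multiplicity $\mu$ at $p_1$, writing $D(\mu)=dH-\mu E_1-\sum_{i=2}^s m_iE_i$. The base case is $\mu=d+1$: here $h^0(X_s^n,D(\mu))=0$, since a degree-$d$ form cannot vanish to order $>d$ at a point, while the terms of \eqref{ldim} cancel in pairs $J\leftrightarrow J\cup\{1\}$, so $\ldim(D(\mu))=0$ as well. For the inductive step one feeds $h^0(X_s^n,D(\mu+1))=\ldim(D(\mu+1))$ together with the value of the restriction term into the two displayed identities; provided the restriction term equals $\ldim(l(D(\mu)))$, the combinatorial identity closes the induction and yields $h^0(X_s^n,D(\mu))=\ldim(D(\mu))$, in particular at $\mu=m_1$.

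The crux — and the step I expect to be the main obstacle — is to show that the restriction term equals $\ldim(l(D(\mu)))$, i.e. that the reduced system in dimension $n-1$ is itself linearly non-special. When $k_C(l(D(\mu)))^+=0$ this is immediate from the inductive hypothesis; one checks this holds throughout the telescoping range $\mu\le d$ exactly when $\sum_{i=2}^s m_i\le(n-1)d$, which, given $\sum_{i=1}^s m_i\le nd$ (the content of $k_C(D)^+=0$, see \eqref{multC}) and $m_1\le d$, guarantees the clean case. The difficulty is the complementary range, where the reduced $k_C$ turns positive (and the correction $-k_C(D(\mu))^+E_q$ in \eqref{indstep} switches on, introducing the extra point $q\in\pi_1(C)$). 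There the inductive hypothesis produces the full formula \eqref{dim}, with genuine secant contributions $F_t$, $t\ge1$, attached both to the projected points and to $q$. One must then show these contributions reorganise — using the arithmetic Property \ref{property2}, which trades $s$ points for $s-1$ points and is precisely what is needed to absorb the point $q$ — so that the full formula for the corrected, possibly $k_C$-positive reduced divisor collapses back to the linear expression $\ldim(l(D(\mu)))$. Making this cancellation precise is the technical heart of the argument.
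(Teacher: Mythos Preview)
Your combinatorial identity $\ldim(D)-\ldim(D-E_1)=\ldim(l(D))$ and the base case $\mu=d+1$ are both correct. The difficulty you flag in the final paragraph, however, is not a technicality to be cleaned up later but a genuine gap. Once $\mu>m_1$ the divisors $D(\mu)$ can have $k_C(D(\mu))>0$, so the restriction term in \eqref{indstep} is $h^0(X_s^{n-1},l(D(\mu))-k_C(D(\mu))E_q)$ with an extra base point $q$ of positive multiplicity. You would then need to prove that the full formula \eqref{dim} for this $s$-pointed divisor on $X_s^{n-1}$ collapses to the purely linear expression $\ldim(l(D(\mu)))$ computed with only the $s-1$ projected points. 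This is a substantial identity in its own right (Property \ref{property2} alone does not do it), and you have not established it. Moreover, your inductive hypothesis is the full Theorem \ref{ThmC} in dimension $n-1$; since Proposition \ref{propB} is itself a building block of that theorem, you are implicitly committing to a global reorganisation of the induction (prove all of Propositions \ref{ThmdimP2}--\ref{propD} and Theorem \ref{ThmC} level by level in $n$), which you should make explicit.

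The paper sidesteps this entirely by telescoping in the \emph{opposite} direction: it increases $m_1$ downwards, i.e.\ it iterates \eqref{indstep} on $D+E_1,\,D+2E_1,\dots,D+m_1E_1$, reaching
\[
h^0(X_s^n,D)=h^0\bigl(X_{s-1}^n,D+m_1E_1\bigr)-\sum_{\mu=1}^{m_1}h^0\bigl(X_{s-1}^{n-1},l(D+\mu E_1)\bigr).
\]
The point is that lowering the multiplicity at $p_1$ can only decrease $k_C$, so $k_C(D+\mu E_1)^+=0$ for every $\mu\ge1$ and the $E_q$-correction never appears. A short computation (using $\sum m_i\le nd$ and $m_1\le d$) shows the restricted divisors $l(D+\mu E_1)$ on $X_{s-1}^{n-1}$ also satisfy $k_{C_{n-1}}^+=0$. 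Hence the right-hand side is handled by induction on $s$ for the first term (base case $s=n+2$ via \cite[Corollary 4.8]{BDPLinear}) and by induction on $n$ for the sum, both \emph{within} the hypothesis of Proposition \ref{propB} itself, with no appeal to the full theorem. The final summation is closed by the hockey-stick identity \eqref{hockey}. Telescoping towards smaller multiplicity, rather than towards $\mu=d+1$, is the key idea you are missing.
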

Before proving the proposition, we recall the following identity sometimes referred to as the \emph{hockey-stick identity}, that will be used in the proof. For any  $a, b\in\mathbb{N}$, we have 
\begin{equation} \label{hockey}  \sum_{\lambda=0}^b \binom{a+\lambda}{a} = \binom{a + b + 1}{a+1}. \end{equation}

\begin{proof}[Proof of Proposition \ref{propB}]
 The proof will be by induction on $s$, with base case $s=n+2$, and on $n$, with base case $n=2$. 

 If $s\le n+2$, then the hypothesis $k^+_C=0$ is automatically satisfied by any effective divisor. Moreover that $h^0(X^n_s,D)=\ldim(D)$ was proved in \cite[Corollary 4.8]{BDPLinear}. 
If $n=2$ and $k_C(D)^+=0$, then $h^0(X^n_s,D)=\ldim(D)$ by Proposition \ref{ThmdimP2}.

 Using Castravet and Tevelev's exact sequence that resulted in \eqref{indstep} and the assumption that $k^+_C=0$, we obtain
$$ h^0(X_s^n,D)= h^0(X_s^n,D+E_1) - h^0(X_{s-1}^{n-1}, l(D+E_1)), $$
where $l$ is the linear map on $\text{Pic}(X_s^n)$ defined in (\ref{lmap}).
 After recursively applying the latter, and noticing that $k_C(D+\mu E_1)^+=0$, for $\mu\ge 1$, we arrive to 
 \begin{equation} \label{formula}
h^0(X_s^n,D)= h^0(X_s^n,D+m_1 E_1) - \sum_{\mu=1}^{m_1} h^0(X_{s-1}^{n-1}, l(D+ \mu E_1)).  
\end{equation}

We first compute the first summand of the right hand side of \eqref{formula}.
Notice that $D+m_1E_1 = dH - \sum_{i=2}^s m_i E_i$ is effective on $X_{s-1}^n$. Since by   $k_C(D)^+=0$, that gives  $\sum_{i=1}^s m_i - nd\le0$, then $  \sum_{i=2}^s m_i - nd\le 0, $ and so
$$ k_C(D+m_1 E_1) = \left\lceil \frac{ \sum_{i=2}^s m_i -nd}{(s-1)-n-2} \right\rceil \leq 0. $$
 Hence, by the induction hypothesis on the number of points $s$, we know that $H^0(X_{s-1}^n, D+m_1E_1)$ is linearly non-special, which means that
\begin{equation} \label{s-1induction} h^0(X_s^n,D+m_1 E_1)= \ldim(D+m_1 E_1)=\sum_{1 \notin I} (-1)^{|I|} \binom{n + k_I - |I|}{n} , \end{equation}
 where we use the notation $1 \notin I$ to indicate that the sum ranges over all the index sets $I\subset \{ 2, \ldots, s \}$, including the empty set $I=\emptyset$. In this last case, we keep using the convention $k_{\emptyset}=d$.

Now we  compute the remaining summands of (\ref{formula}).
Recall that $X_{s-1}^{n-1}$ denotes the blow-up of $\mathbb P^{n-1}$ at points $q_2, \ldots, q_s$ that sit on a rational normal curve of degree $n-1$ that  is denoted with $C_{n-1}$. 
For every  $1\leq \mu \leq m_1$, we set $D_{n-1}^\mu := l(D+\mu E_1)$ and we can write
$$D_{n-1}^\mu = d_{n-1}^\mu H_{n-1} - \sum_{i=2}^s m_{i, n-1}^\mu  E_{i,n-1}  \in \text{Pic}(X_{s-1}^{n-1})$$
where
 $d_{n-1}^\mu := m_1 - \mu$ and 
$ m_{i, n-1}^\mu := m_1 + m_i - \mu - d$, for every $i\in \{2, \ldots, s\}$. 
Moreover $k_{C_{n-1}}(D_{n-1}^\mu)^+ =0$ since
\begin{align*}
\sum_{i=2}^{s} m_{i,n-1}^\mu - (n-1)d_{n-1}^\mu &= \sum_{i=2}^{s}m_i + (s-1)(m_1 -\mu -d) -(n-1)(m_1-\mu) \\
										&= \left(\sum_{i=1}^s m_i - nd \right) + (s-n-1) \left(m_1 - \mu -d\right) - \mu,
\end{align*} 
which is nonpositive because $\sum_{i=1}^s m_i - nd\le0$ by the assumption $k_C^+(D)=0$
 and because
$ (s-n-1)(m_1 - \mu - d) \leq 0, $
 since $s\geq n+3$ and since $m_1 \leq d$, the latter following the effectivity assumption on $D$. 

 Therefore, by the induction hypothesis on the dimension $n$, we can say that the linear system corresponding to $D_{n-1}^\mu$ is linearly non-special so that
 \begin{equation}\label{D-mu-n-1}
 h^0(X_{s-1}^{n-1}, D_{n-1}^\mu) =\ldim(D_{n-1}^\mu)=\sum_{I \subseteq \{2, \ldots, s\}} (-1)^{|I|} \binom{n-1 + {k}_{I,n-1}^\mu - |I|}{n-1},
 \end{equation}
 where we set $k_{I,n-1}^\mu:= k_{I}(D^\mu_{n-1})$.
 We will now reformulate the integer \eqref{D-mu-n-1} in such a way that it  depends explicitly on the parameters $k_I(D)$.
 In order to do so, for every subset $I \subseteq \{2, \ldots, s\}$, we  compute
\begin{align*}
k_{I,n-1}^\mu&= \sum_{i\in I} m_{i,n-1}^\mu - (|I|-1) d_{n-1}^\mu \\
			     &= \sum_{i \in I} m_i + |I|(m_1 - \mu - d)  - (|I|-1)( m_1 - \mu) \\
			& = m_1 + \sum_{i\in I} m_i - |I|d - \mu = k_{I\cup \{1\}}(D) - \mu.
\end{align*}
Substituting in \eqref{D-mu-n-1}, we obtain
\begin{align*}
 h^0(X_{s-1}^{n-1}, D_{n-1}^\mu)	&= \sum_{I \subseteq \{2, \ldots , s \}} (-1)^{|I|} \binom{ n + k_{I \cup \{1\}} - \mu - (|I|+1)}{n-1} \\
						&=  \sum_{1 \in I \subseteq \{1, \ldots , s \}} (-1)^{(|I|-1)} \binom{ n + k_I - \mu - |I|}{n-1} \label{hockeyapp}.
\end{align*}
 Hence,
\begin{align*} 
\sum_{\mu = 1}^{m_1}  h^0(X_{s-1}^{n-1}, D_{n-1}^\mu) &=  \sum_{\mu=1}^{m_1} \sum_{1 \in I \subseteq \{1, \ldots , s \}} (-1)^{(|I|-1)} \binom{ n + k_I - \mu - |I|}{n-1} \nonumber \\
		&= -  \sum_{1 \in I \subseteq \{1, \ldots , s \}} (-1)^{|I|} \sum_{\mu=1}^{m_1} \binom{ n + k_I - \mu - |I|}{n-1} \nonumber \\
		&= -  \sum_{1 \in I \subseteq \{1, \ldots , s \}} (-1)^{|I|} \sum_{\lambda = k_I - m_1 - |I| + 1}^{k_I - |I|} \binom{ n -1 + \lambda}{n-1} \nonumber \\
		&= -  \sum_{1 \in I \subseteq \{1, \ldots , s \}} (-1)^{|I|} \sum_{\lambda = 0}^{k_I - |I|} \binom{ n -1 + \lambda}{n-1}.		
\end{align*}
Observing that since $I\ni 1$, then $m_1 \geq k_I$ because $D$ is effective, we obtain $ k_I - m_1 - |I| -1 \leq -|I| + 1 \leq 0, $ and so
$$\sum_{\mu = 1}^{m_1}  h^0(X_{s-1}^{n-1}, D_{n-1}^\mu) = -  \sum_{1 \in I \subseteq \{1, \ldots , s \}} (-1)^{|I|} \sum_{\lambda = 0}^{k_I - |I|} \binom{ n -1 + \lambda}{n-1}.$$
 Applying the identity \eqref{hockey} to the above, we get 
\begin{equation} \label{1inIcase} \sum_{\mu = 1}^{m_1}  h^0(X_{s-1}^{n-1}, D_{n-1}^\mu) = - \sum_{1\in I} (-1)^{|I|} \binom{n+k_I- |I|}{n},\end{equation}
 where $1\in I$ indicates that the sum ranges over all the index sets $I\subseteq \{1, \ldots, s\}$ such that $1\in I$. 

 Finally, we can substitute (\ref{s-1induction}) and (\ref{1inIcase}) in the equation (\ref{formula}):
\begin{align*}
h^0(X_s^n,D) &= h^0(X_s^n,D+m_1 E_1) - \sum_{\mu=1}^{m_1} h^0(X_{s-1}^{n-1}, l(D+ \mu E_1)) \\
			&=   \sum_{1 \notin I} (-1)^{|I|} \binom{n + k_I - |I|}{n} +  \sum_{1\in I} (-1)^{|I|} \binom{n+k_I- |I|}{n} \\
			& =  \sum_{I\subseteq \{1, \ldots, s\} } (-1)^{|I|} \binom{n+k_I- |I|}{n},
\end{align*}
concluding the proof.
\end{proof}


\subsection{The case $\varepsilon = s-n-3$}\label{epsilon0s-n-s}
In this section, using the same notation of Section \ref{preliminaries}, we 
consider the case where the exceeding number   $\varepsilon=\varepsilon(D)$ (cf. Definition \ref{def-epsilon}) attains its maximum value, that is $\varepsilon = s-n-3$. We shall prove that the Main Theorem \ref{ThmC} holds under this assumption. Since the case $k_C^+(D)=0$ is already covered in Section \ref{kC+=0}, we  can make the further assumption that $k_C \geq 1$.

\begin{propA} \label{propC}
Let $D=dH-\sum_{i=1}^s$ be a non-redundant effective divisor on $X^n_s$. Assume that $\varepsilon(D) = s-n-3$ and that $k_C(D) \geq 1$. Then 
\begin{equation} \label{dim-epsilon=s-n-s} h^0(X_s^n, D)= \sum_{I, \sigma_t} (-1)^{|I|}F_t (n + k_{I, \sigma_t} - r_{I, \sigma_t} - 1, s, s-n-3, n). \end{equation}
\end{propA}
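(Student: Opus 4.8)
The plan is to argue by a double induction: on the dimension $n$ of the ambient space, with base case $n=2$ supplied by Proposition \ref{ThmdimP2}, and on the integer $k_C=k_C(D)$; since Proposition \ref{propC} postulates $k_C\ge 1$, the induction on $k_C$ bottoms out at the value $k_C=0$, which is Proposition \ref{propB}. Throughout, the engine is the Castravet--Tevelev restriction sequence \eqref{indstep}, which for $k_C\ge 1$ reads
\begin{equation*}
h^0(X_s^n, D)=h^0(X_s^n, D-E_1)+h^0(X_s^{n-1}, l(D)-k_C E_q),
\end{equation*}
with $l$ the map of \eqref{lmap}. First I would record how the two invariants transform. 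A direct computation from \eqref{kC with epsilon} shows that $D-E_1$ has $k_C(D-E_1)=k_C-1$ and exceeding number $\varepsilon(D-E_1)=0$, whereas the restricted divisor $l(D)-k_C E_q$ on $X_s^{n-1}$ has $k_{C_{n-1}}=k_C+m_1-d$ and, crucially, exceeding number exactly $s-n-3$, i.e. one below the maximal value $s-(n-1)-3$ allowed in dimension $n-1$.

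The dimension-$(n-1)$ summand is evaluated by the inductive hypothesis on $n$. I would pair this with a dictionary for the behaviour of the special-effect subvarieties under the projection $\pi_1$ from $p_1$: a join $J(L_I,\sigma_t)$ with $1\in I$ projects to $J(L_{I\setminus\{1\}},\sigma_t)$ in $\mathbb P^{n-1}$, while a join with $1\notin I$ projects to a join that additionally feels the auxiliary point $q$ on $C_{n-1}$. Tracking the corresponding transformation of the integers $k_{I,\sigma_t}$ of \eqref{multjoins} is the geometric bookkeeping that feeds the combinatorics.

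To dispose of the residual dimension-$n$ term $h^0(X_s^n,D-E_1)$, whose exceeding number $0$ is not yet under control, I would iterate the restriction in the variable $m_1$. Removing $E_1$ lowers $k_C$ by one and resets $\varepsilon$ to $0$; removing it a further $s-n-3$ times raises $\varepsilon$ back up to $s-n-3$ at the same value $k_C-1$, so that after $s-n-2$ steps the residual divisor again attains the maximal exceeding number with $k_C$ lowered by one, to which the inductive hypothesis on $k_C$ (Proposition \ref{propC} itself) applies, each intermediate step contributing one further dimension-$(n-1)$ term handled as above. When $m_1$ is too small for this iteration to remain effective and non-redundant, the point $p_1$ becomes redundant first and Remark \ref{redundant} lowers the number of points, which is where an induction on $s$ together with Property \ref{property2} enters. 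Assembling the collected dimension-$(n-1)$ contributions with the lower-$k_C$ dimension-$n$ contribution, I would match the outcome to the target formula \eqref{dim-epsilon=s-n-s} using Properties \ref{property4} and \ref{property5}, which are precisely the identities converting a pair of $\varepsilon=0$ contributions in dimensions $n$ and $n-1$ into a single $\varepsilon=s-n-3$ contribution in dimension $n$.

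I expect the main obstacle to be this last combinatorial reassembly: organizing the iterated double sum over all joins $J(L_I,\sigma_t)$ so that, after projection, every term lands on exactly one term of \eqref{dim-epsilon=s-n-s} with the correct sign $(-1)^{|I|}$ and the correct shift of the first argument of $F_t$. Properties \ref{property4} and \ref{property5} (themselves nontrivial binomial identities, proved in Appendix \ref{Fiveproperties}) are engineered to make these sums telescope, but applying them demands a careful case split according to whether a given join contains $p_1$, and attention to the degenerate cases in which a join fills $\mathbb P^n$ or $p_1$ turns redundant, as well as the boundary value $s=n+3$, where $s-n-2=1$ forces the ceiling in \eqref{multC} to be treated separately.
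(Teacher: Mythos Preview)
Your inductive scaffolding (double induction on $n$ and $k_C$, with Propositions \ref{ThmdimP2} and \ref{propB} as bases, and Properties \ref{property4}--\ref{property5} as the combinatorial glue) matches the paper's. The gap is in the direction you run the restriction. You write \eqref{indstep} as
\[
h^0(X_s^n,D)=h^0(X_s^n,D-E_1)+h^0\bigl(X_s^{n-1},\,l(D)-k_C E_q\bigr)
\]
and assert $k_C(D-E_1)=k_C-1$ with $\varepsilon(D-E_1)=0$. But $D-E_1$ \emph{increases} $m_1$ by one, hence increases $\sum m_i-nd$ by one; starting from $\varepsilon(D)=s-n-3$ this gives $k_C(D-E_1)=k_C$ with $\varepsilon(D-E_1)=s-n-4$ (and for $s=n+3$ even $k_C(D-E_1)=k_C+1$). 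Your proposed iteration therefore drives $k_C$ \emph{up}, not down, and never reaches the $(n,k_C-1)$ base case. Likewise your restricted divisor $l(D)-k_C E_q$ does have $\varepsilon_{n-1}=s-n-3$, but that is \emph{not} the maximal value $s-(n-1)-3=s-n-2$ in dimension $n-1$, so Proposition \ref{propC} in dimension $n-1$ does not apply to it directly.

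The fix is to apply \eqref{indstep} to $D+E_1$ instead, i.e.\ use
\[
h^0(X_s^n,D)=h^0(X_s^n,D+E_1)-h^0\bigl(X_s^{n-1},\,l(D+E_1)-k_C(D+E_1)^+E_q\bigr),
\]
which is what the paper does. Because $\varepsilon(D)=s-n-3$ is maximal, a \emph{single} step already yields $k_C(D+E_1)=k_C-1$ with $\varepsilon(D+E_1)=0$; and the restricted divisor $D_{n-1}:=l(D+E_1)-(k_C-1)E_q$ also has $\varepsilon(D_{n-1})=0$, with $k_{C_{n-1}}(D_{n-1})=k_{\{1\},\sigma_1}-2$. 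No iteration is needed. Both pieces then fall under the inductive hypothesis, which here must be the \emph{full} Theorem \ref{ThmC} (for any $\varepsilon$) at the smaller parameter pair---either $(n,k_C-1)$ or $(n-1,\,\cdot\,)$---not merely Proposition \ref{propC} itself. After rewriting the resulting $\varepsilon=0$ terms via the dictionary $k_{I,\sigma_t}(D+E_1)=k_{I,\sigma_t}-t$ or $k_{I,\sigma_t}-t-1$ according as $1\notin I$ or $1\in I$ (and the analogous formulas \eqref{relationoverline}--\eqref{relationoverline2} for $D_{n-1}$), Properties \ref{property4} and \ref{property5} convert each matched pair of $\varepsilon=0$ contributions into the desired $\varepsilon=s-n-3$ term in one stroke, exactly as you anticipated in your final paragraph.
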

\begin{proof}
The proof is by induction on the integers $n$ and on $k_C=k_C(D)$. For every pair $(n,k_C)$, we will assume the statement true for 
\begin{itemize}
\item all effective non-redundant divisors with parameters $(n,k_C-1)$ and any exceeding number (the base step is covered in Proposition \ref{propB}) and 
\item for all effective non-redundant divisors $(n-1,k_C)$ and any exceeding number (with base case covered in Proposition \ref{ThmdimP2}).
\end{itemize}
The inductive step is provided by equation \eqref{indstep}: 
\begin{equation} \label{excnumfirststep} h^0(X_s^n,D)= h^0(X_s^n,D+E_1) - h^0(X_{s}^{n-1}, l(D+E_1)-k_C^+(D+E_1) E_q).\end{equation}
We compute all summands appearing on the right hand side of (\ref{excnumfirststep}) individually.

 First, we consider the divisor $D+E_1$. 
Assume first of all that $m_1=1$. Since by the hypotheses $m_1\ge k_C\ge 1$  then $k_C=1$ and, in particular, $\sum_{i=2}^sm_i-nd=0$, due to the fact that $\varepsilon = s-n-3$.
This implies that
$$ k_C(D+E_1) = \left\lceil \frac{ \sum_{i=2}^s m_i - nd}{(s-1)-n-2} \right \rceil = 0. $$
 Now, assume that $m_1 \geq 2$.
 Since $\varepsilon(D)= s-n-3$, it is straightforward to check that 
 \begin{align*}
 k_C(D+E_1) &= \left \lceil \frac{m_1 - 1 + \sum_{i=2}^s m_i - nd}{s-n-2} \right \rceil \nonumber \\
		& = \left \lceil \frac{\sum_{i=1}^s m_i - nd + (s-n-3) - (s-n-2)}{s-n-2} \right\rceil  \nonumber \\
		& =  \frac{\sum_{i=1}^s m_i - nd + (s-n-3)}{s-n-2} - \frac{ s-n-2}{s-n-2}  \nonumber \\ 
		& =k_C  -1. \label{kcde1epscase}
\end{align*}
In both cases we conclude that $k_C(D+E_1) = k_C - 1$ and that $D+E_1$ has exceeding number $\varepsilon(D+E_1)=0$.
Moreover, since $D$ is non-redundant, then so is $D+E_1$. In fact we have $m_1 - 1 = k_C -1 \geq  k_C(D+E_1)$ and
$m_i \geq  k_C\ge k_C(D+E_1)$ for any $i \in \{2, \ldots, s\}$. 
Then, by induction on the integer $k_C$, the dimension of $H^0(X^n_s,D+E_1)$ is 
\begin{equation} \label{dimension1}
h^0(X_s^n, D+E_1)= \sum_{\substack{ 0 \leq t \leq n/2 \\ 0\leq |I| \leq n-2t  }} (-1)^{|I|}F_t (n + k_{I, \sigma_t}(D+E_1) - r_{I, \sigma_t} - 1, s, 0, n).
\end{equation}
In order to obtain an explicit expression, we write each integer $k_{I,\sigma_t}(D+E_1)$ in terms of the integer $k_{I,\sigma_t}=k_{I,\sigma_t}(D)$.
Using the relation $k_C(D+E_1) = k_C-1$, a simple calculation shows that
\[
k_{I, \sigma_t}(D+E_1)=
	\begin{cases}
		k_{I, \sigma_t}-t &\quad \text{if } 1 \notin I \\
		k_{I, \sigma_t}-t-1 & \quad \text{if } 1 \in I.
	\end{cases}
\]
 We can plug this in formula (\ref{dimension1}) to obtain
\begin{align}
h^0(X_s^n,D+E_1) 	 = &\sum_{\substack{ 0 \leq t \leq n/2 \\ 0\leq |I| \leq n-2t \\ 1 \notin I }}  (-1)^{|I|}F_t (n + k_{I, \sigma_t}-t - r_{I, \sigma_t} - 1, s, 0, n)  \nonumber \\
			    &+ \sum_{\substack{ 0 \leq t \leq (n-1)/2 \\ 0\leq |I| \leq n-2t \\ 1 \in I }}  (-1)^{|I|}F_t (n + k_{I, \sigma_t} -t -1 - r_{I, \sigma_t} - 1, s, 0, n). \label{d+e1caseeps2}
\end{align}
  We notice that the index $t$ in the second summation goes up to $t=(n-1)/2$ because if $t$ were $n/2$,  we would have
$ 0 \leq |I| \leq n - 2t = 0$ which is impossible.

We now consider the second summand of  the right hand side of (\ref{excnumfirststep}). To simplify notation we write $D_{n-1}:= l(D+E_1)-k_C^+(D+E_1) E_q$. It is an effective divisor  that we can expressed in the Picard group basis of $X_s^{n-1}$ as follows:
\begin{align*}
D_{n-1}&:= d_{n-1} H_{n-1}-\sum_{i=1}^s m_{i,n-1}E_{i,n-1}\\
 &\ = (m_1 - 1) H_{n-1} - (k_C -1 ) E_{1, n-1} - \sum_{i=2}^s (m_i + m_1 - 1 - d) E_{i,n-1}.
 \end{align*}
 Furthermore, letting $C_{n-1}$ denote the rational normal curve of degree $n-1$ passing through the blown-up points of $X_s^{n-1}$, we have that
\begin{align*} 
k_{C_{n-1}}(D_{n-1}) & = \left \lceil \frac{ k_C -1 + \sum_{i=2}^s m_i + (s-1)(m_1 - 1 -d) - (n-1)(m_1 -1)}{s-(n-1)-2} \right \rceil \\
				& =  \left \lceil \frac{ k_C + \sum_{i=1}^s m_i -nd  -2 + (s-n-1)(m_1 - 1) - (s-n-1)d}{s-n-1} \right \rceil .
\end{align*}
 Recalling that $\varepsilon(D)= s-n-3$, we get
$$  \sum_{i=1}^s m_i -nd =  (s-n-2)k_C - (s-n-3), $$
and so
\begin{align} \label{kcn-1eps}
k_{C_{n-1}}(D_{n-1}) & =  \left \lceil \frac{ k_C  + (s-n-2)k_C - (s-n-3) -2}{s-n-1} \right \rceil + m_1 - d -1 \nonumber  \\
				& = m_1 + k_C - d -1 - 1 \nonumber\\
				& = k_{\{1\}, \sigma_1} - 2.
\end{align}
This also shows that $\varepsilon(D_{n-1})=0$ and that $D_{n-1}$ is non-redundant. The latter holds because
$$ k_{C_{n-1}}(D_{n-1}) =  m_1 + k_C - d -2 < k_C - 1 = m_{1, n-1}, $$
where the inequality follows from the assumption $m_1\le d$, and because  for every $i \in \{2, \ldots, s\}$
$$ k_{C_{n-1}}(D_{n-1}) = m_1 + k_C - d -2 < m_1 + m_i -d -1 = m_{i, n-1}, $$
which follows from the fact that $D$ is non-redundant.
 Then, by induction on the dimension $n$,  the dimension of $H^0(X_s^{n-1}, D_{n-1})$ is 
\begin{equation} \label{dimension2}
h^0(X_s^{n-1}, D_{n-1})= \sum_{\substack{0\leq t \leq (n-1)/2 \\ 0\leq |I| \leq n-1-2t}} (-1)^{|I|}F_t (n-1 + k_{I, \sigma_t}(D_{n-1}) - r_{I, \sigma_t} - 1, s, 0, n-1). 
\end{equation}
In order to obtain an explicit expression, we write each integer  $k_{I,\sigma_t}(D_{n-1})$ in terms of $k_{I,\sigma_t}=k_{I,\sigma_t}(D)$. Using the relation
In (\ref{kcn-1eps}), one can easily verify that 
\begin{equation} \label{relationoverline}
k_{I, \sigma_t} (D_{n-1}) =
	\begin{cases}
		k_{I\cup \{1\}, \sigma_t}-t-1 &\quad \text{if } 1 \notin I \\
		k_{I\setminus \{1\}, \sigma_{t+1}}-t-1 & \quad \text{if } 1 \in I.
	\end{cases}
\end{equation}
We also have the following
\begin{equation} \label{relationoverline2}
r_{I, \sigma_t} =
	\begin{cases}
		r_{I\cup \{1\}, \sigma_t}-1 &\quad \text{if } 1 \notin I \\
		r_{I\setminus \{1\}, \sigma_{t+1}}-1 & \quad \text{if } 1 \in I.
	\end{cases}
\end{equation}
 We can plug (\ref{relationoverline}) and (\ref{relationoverline2}) into (\ref{dimension2}) to get
$$ h^0(X_s^{n-1}, D_{n-1}) $$
\begin{equation} \label{eqepsn-11}= \sum_{\substack{0 \leq t \leq (n-1)/2 \\ 0\leq |I| \leq n-2t \\ 1 \in I}}  (-1)^{|I|-1}F_{t-1} (n + k_{I, \sigma_t}-t - r_{I, \sigma_t} - 1, s, 0, n-1) \end{equation}
\begin{equation*}  + \sum_{\substack{1 \leq t \leq n/2 \\ 0\leq |I| \leq n-2t \\ 1 \notin I}}  (-1)^{|I|-1}F_t (n + k_{I, \sigma_t} -t -1 - r_{I, \sigma_t} - 1, s, 0, n-1). \end{equation*}

Finally we can compute $h^0(X_s^n, D)$ by putting together \eqref{excnumfirststep}, (\ref{eqepsn-11}) and (\ref{d+e1caseeps2}). Indeed, by separating the first summation of (\ref{d+e1caseeps2}) into two summations, one where $t=0$ and another where $1\leq t \leq n/2 $, we arrive to
\begin{align}
h^0(X_s^n,D) & = h^0(X_s^n, D+E_1) + h^0(X_s^{n-1}, D_{n-1}) \nonumber \\ 
		      & = A_0 + A_1 + A_2,  \label{alltogetherexcnumcase}
\end{align}
 where
$$ A_0	 =	 \sum_{\substack{ |I|=n \\ 1 \notin I }}  (-1)^{|I|}F_0 (n + k_{I, \sigma_t} - r_{I, \sigma_t} - 1, s, 0, n) \label{sum1}, $$
\begin{align*}
A_1      = &  \sum_{\substack{ 1\leq t \leq n/2 \\ 0\leq |I| \leq n-2t \\ 1 \notin I }}  (-1)^{|I|}F_t (n + k_{I, \sigma_t}-t - r_{I, \sigma_t} - 1, s, 0, n) \\
		&+ \sum_{\substack{ 1\leq t \leq n/2 \\ 0\leq |I| \leq n-2t \\ 1 \notin I }}    (-1)^{|I|}F_{t-1} (n + k_{I, \sigma_t}-t - r_{I, \sigma_t} - 1, s, 0, n-1),
\end{align*}
 and 
\begin{align*}
A_2      =&  \sum_{\substack{0 \leq t \leq (n-1)/2 \\ 0\leq |I| \leq n-2t \\ 1 \in I}}   (-1)^{|I|}F_t (n + k_{I, \sigma_t} -t -1 - r_{I, \sigma_t} - 1, s, 0, n) \\
		& +  \sum_{\substack{0 \leq t \leq (n-1)/2 \\ 0\leq |I| \leq n-2t \\ 1 \in I}}  (-1)^{|I|}F_t (n + k_{I, \sigma_t} -t -1 - r_{I, \sigma_t} - 1, s, 0, n-1). \label{sum3}
\end{align*}

We notice that, from the definition of the function $F_t$ in (\ref{F_t}), we have that $F_0$ is independent from the values of the parameter $\varepsilon$. Therefore, in $A_0$ we can make the following replacement,
$$ F_0 (n + k_{I, \sigma_t} - r_{I, \sigma_t} - 1, s, 0, n) = F_0 (n + k_{I, \sigma_t} - r_{I, \sigma_t} - 1, s, s-n-3, n). $$
Applying Property \ref{property4} to $A_1$ and Property \ref{property5} to $A_2$, we obtain
\begin{align*} A_1 &= \sum_{\substack{ 1\leq t \leq n/2 \\ 0\leq |I| \leq n-2t \\ 1 \notin I }}    (-1)^{|I|}F_t (n + k_{I, \sigma_t} - r_{I, \sigma_t} - 1, s, s-n-3, n),\\
A_2 &= \sum_{\substack{0 \leq t \leq (n-1)/2 \\ 0\leq |I| \leq n-2t \\ 1 \in I}}   (-1)^{|I|}F_t (n + k_{I, \sigma_t}  - r_{I, \sigma_t} - 1, s, s-n-3, n).  \end{align*}
Finally, substituting the latter expressions in (\ref{alltogetherexcnumcase}), we obtain the formula \eqref{dim-epsilon=s-n-s} and this concludes the proof.
\end{proof}


\subsection{The general case}
In this section  we prove that the Theorem \ref{ThmC} holds for $n\ge 3$ and for any non-redundant and non-empty linear system $\mathcal L = \mathcal L_{n,d}(m_1, \ldots, m_s)$ with   $k_C=k_C(D)\geq 1$ and with exceeding number $\varepsilon=\varepsilon(D) < s-n-3$. 

We first show the statement for a special class of homogeneous linear systems, that is such that $m_1= m_2 = \ldots = m_s = k_C$. 

\begin{propA} \label{propD}
Let $D=dH-m\sum_{i=1}^s E_i$ be an effective divisor on $X^n_s$ with $k_C(D)=m\ge1$. Then \eqref{dim} holds.
\end{propA}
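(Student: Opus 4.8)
The plan is to induct on the nonnegative integer $s-n-3-\varepsilon$, the base case $\varepsilon=s-n-3$ being exactly Proposition \ref{propC}. Note that the hypothesis $m=k_C\ge 1$ makes $D$ automatically non-redundant and excludes the case $k_C^+=0$, and that a direct computation from \eqref{kC with epsilon} gives the clean value $\varepsilon=n(d-m)-2m$, equivalently $d=m+\tfrac{2m+\varepsilon}{n}$, for the homogeneous divisor. One checks from \eqref{multC} that the reduction below keeps $D$ homogeneous with the same $m$ and the same $\varepsilon$ while dropping $s$ by one, so the induction terminates at $s=\varepsilon+n+3$, where $\varepsilon$ is maximal.

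For the inductive step I would assume $\varepsilon<s-n-3$ and apply the Castelnuovo sequence \eqref{indstep} to $D+E_1$ (which is effective), giving
\[
h^0(X_s^n,D)=h^0(X_s^n,D+E_1)-h^0\bigl(X_s^{n-1},\,l(D+E_1)-k_C(D+E_1)^+E_q\bigr).
\]
First I would compute $k_C(D+E_1)$: since $\varepsilon<s-n-3$, formula \eqref{multC} yields $k_C(D+E_1)=m$, so $m_1(D+E_1)=m-1<m=k_C(D+E_1)$ and the point $p_1$ is redundant in $D+E_1$. By Remark \ref{redundant} we drop it to obtain the homogeneous $(s-1)$-point divisor $\bar{D}=dH-m\sum_{i=2}^s E_i$, which has the same $m$ and $\varepsilon$; thus $h^0(X_s^n,D+E_1)=h^0(X_{s-1}^n,\bar{D})$, computed by the inductive hypothesis.

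The key simplification, and what makes the homogeneous case tractable without invoking the dimension $n-1$ formula, is that the restriction term vanishes: by \eqref{lmap} the divisor $l(D+E_1)-k_C(D+E_1)^+E_q$ has degree $m_1(D+E_1)=m-1$ yet is required to have multiplicity $m$ at $q$, and a hypersurface of degree $m-1$ cannot carry a point of multiplicity $m$, so its space of sections is zero (the case $m=1$ being a degree-zero system with a base point). Hence $h^0(X_s^n,D)=h^0(X_{s-1}^n,\bar{D})$, and it only remains to show that the right-hand side of \eqref{dim} takes the same value for $D$ on $s$ points as for $\bar{D}$ on $s-1$ points.

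This last identity is the main obstacle, and I would settle it purely combinatorially. Since $k_C$ is unchanged, \eqref{multjoins} gives $k_{I,\sigma_t}(\bar{D})=k_{I,\sigma_t}(D)$ for every $I\not\ni 1$, so the $1\notin I$ part of \eqref{dim} for $D$ matches the formula for $\bar{D}$ up to replacing $F_t(\,\cdot\,,s-1,\varepsilon,n)$ by $F_t(\,\cdot\,,s,\varepsilon,n)$; Property \ref{property2} converts between these, leaving correction terms $F_{t-1}(\,\cdot\,+1,s,\varepsilon,n)$. Writing each $I\ni 1$ as $I=\{1\}\cup I'$ and using the readily checked shift $a_{\{1\}\cup I',t}=a_{I',t+1}+1$, where $a_{I,t}:=n+k_{I,\sigma_t}-r_{I,\sigma_t}-1$ from \eqref{multjoins} and \eqref{dim-cones}, the $1\in I$ contributions cancel these corrections except at the boundary indices $|I'|=n-2t+1$. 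These index precisely the joins $J(L_{I'},\sigma_t)$ of dimension $r_{I',\sigma_t}=n$, which are excluded from the range of \eqref{dim}; for them $a_{I',t}+1=k_{I',\sigma_t}=(n-t+1)m-(n-t)d=m-(n-t)\tfrac{2m+\varepsilon}{n}\le n-t$, using $n-t\ge n/2$, so $F_{t-1}(k_{I',\sigma_t},s,\varepsilon,n)=0$ by the vanishing $F_\tau(a,s,\varepsilon,n)=0$ for $a<n-\tau$ that follows at once from \eqref{F_t} and the convention $\binom{a}{n}=0$ for $a<n$. The two values therefore agree and the induction closes. I expect the bookkeeping of the index shift $I\leftrightarrow\{1\}\cup I'$, $t\leftrightarrow t+1$, together with the verification of the inequality $k_{I',\sigma_t}\le n-t$, to be the only delicate points.
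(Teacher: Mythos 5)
Your strategy is essentially the paper's: apply \eqref{indstep} to $D+E_1$, kill the restriction term because a degree-$(m-1)$ hypersurface cannot have a point of multiplicity $m$, use Remark \ref{redundant} to pass to the homogeneous divisor on $s-1$ points with the same $m$ and $\varepsilon$, induct with Proposition \ref{propC} as base case, and close the resulting combinatorial identity via Property \ref{property2} together with the index shift $I\leftrightarrow\{1\}\cup I'$. All of those reductions are correct and match the paper's proof step for step.

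The gap is in your final vanishing claim for the boundary terms. Those terms run over $1\le t\le (n+1)/2$ with $|I'|=n-2t+1$, not only over $t\le n/2$: when $n$ is odd the index $t=(n+1)/2$, $I'=\emptyset$ occurs (it is the contribution of $J(p_1,\sigma_{(n-1)/2})$ to the formula for $D$, which has no matching correction term), and there $n-t=(n-1)/2<n/2$, so your inequality chain breaks down. Worse, the needed bound $k_{I',\sigma_t}\le n-t$ is \emph{not} a consequence of the numerical relation $d=m+\tfrac{2m+\varepsilon}{n}$ alone: for $n=5$, $\varepsilon=0$, $m=15$, $d=21$, $s\ge 9$ one has $k_C=m$, $\varepsilon=0$, yet $k_{\emptyset,\sigma_3}=3m-2d=3$ and $F_{2}(3,s,0,5)=\binom{s-7}{2}\neq 0$. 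What rescues the statement is the effectivity of $D$, which you never invoke at this point: since $|I'|+t=n-t+1>n/2$, homogeneity (equation \eqref{samemultiplicities} in the paper) gives $k_{I',\sigma_t}=k_{\sigma_{n-t+1}}$, and $\sigma_{n-t+1}=\PP^n$, so effectivity forces $k_{\sigma_{n-t+1}}\le 0$ (numerical data as in the example above are simply not effective), whence the vanishing $F_{t-1}(k_{I',\sigma_t},s,\varepsilon,n)=0$. This is precisely how the paper argues — it uses effectivity to discard \emph{all} terms with $|I|+t>n/2$ before doing the matching — so your proof is repaired by replacing the arithmetic estimate with that effectivity argument; as written, the odd-$n$ boundary case is unjustified.
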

\begin{proof}
If $\varepsilon = s-n-3$, then the statement follows from Proposition \ref{propC}. From now on, we assume that $\varepsilon < s-n-3$. 
The proof is by induction on  $n$, on $\varepsilon$ (backward), and on $s$, assuming the statement is true for:
\begin{itemize}
\item all effective non-redundant divisors in $X^{n-1}_s$, with base case $n=2$ covered in Proposition \ref{ThmdimP2};
\item all effective non-redundant divisors on $X^n_{s-1}$, where the base step, $s=n+2$, is covered in \cite[Corollary 4.8]{BDPLinear} and in this case $h^0(X^n_s,D)=\ldim(D)$, i.e. $D$ is linearly non-special;
\item all effective non-redundant divisors with exceeding number at least $\varepsilon+1$, with base case covered in Proposition \ref{propC}, where the exceeding number attains its maximum value.
\end{itemize}
Using the inductive step, described in \eqref{indstep}, we get
\begin{equation} \label{formula2indhomcase} h^0(X_s^n,D) =  h^0(X_s^n, D + E_1) - h^0(X_s^{n-1}, D_{n-1}), \end{equation}
 where we denote
$$D_{n-1} = l(D+E_1) - k_C^+(D+E_1)E_q.$$

We claim that $h^0(X_s^{n-1}, D_{n-1})=0$.  
To see this, assume first that $m\geq 2$:  using $\varepsilon < s-n-3$, we compute \begin{align*}
 k_C(D+E_1) & = \left\lceil \frac{(m -1) + \sum_{i=2}^s m - nd}{s-n-2} \right\rceil \nonumber \\
		& = \frac{\sum_{i=1}^s m - nd -1 + (\varepsilon + 1)}{s-n-2} \nonumber \\
		&= k_C(D). \label{kchomggencase}
\end{align*}
 If $m=1$, from the expression 
 $$ k_C = \frac{s -nd + \varepsilon}{s-n-2} = 1, $$
we obtain
$$ k_C(D+E_1) = \frac{(s-1) -nd + \varepsilon}{s-n-3} = 1. $$
In both cases, in the
expressing of $D_{n-1}$ in the Picard group basis of $X^{n-1}_s$, the coefficient of  $H_{n-1}$ is $m-1$ while the coefficient of $E_{1,n-1}$ is $- k_C(D+E_1)=-m$. The proof of the claim follows from the simple observation that  the only hypersurface of  a projective space of degree $m-1$ with a  point of multiplicity $m$ is  identically zero.

Therefore   (\ref{formula2indhomcase}) reduces to 
\begin{equation} \label{equalgencasehom1} h^0(X_s^n,D) =  h^0(X_s^n, D + E_1). \end{equation}

 One can verify that $D+E_1$ defines a redundant linear system, since its multiplicity at $p_1$ is $m-1$ which is strictly smaller than $k_C(D+E_1)=m$. Hence, by Remark \ref{redundant}  we have
\begin{equation} \label{conghomcase} H^0(X_s^n, D+E_1) \cong H^0(X_{s-1}^n, D+m E_1). \end{equation}
 Observe that 
$D_{s-1} := D+m E_1 = dH - \sum_{i=2}^s m E_i , $
 is a homogeneous divisor on $X^n_{s-1}$. Moreover, from
$$k_C = \frac{sm - nd + \varepsilon}{s-n-2} = m, $$
 with $\varepsilon < s-n-3$, we obtain
$$ k_C(D_{s-1}) = \frac{(s-1)m -nd + \varepsilon}{(s-1)-n-2} = m. $$
In particular we can apply  induction on $s$, and, using (\ref{equalgencasehom1}) and the isomorphism (\ref{conghomcase}), we compute that 
\begin{align} 
 h^0(X_s^n,D) 
		       & = \sum_{1\notin I} (-1)^{|I|}F_t(n+k_{I,\sigma_t} - r_{I,\sigma_t} - 1, s-1, \varepsilon, n). \label{homogeneous}
\end{align}
where,  we use $1\notin I$ to indicate that the sum ranges over all the indices $0\leq t \leq n/2$ and $I\subseteq \{2, \ldots, s\}$ with $0\leq |I| \leq n-2t$. 
As it turns out,  many of the integers $k_{I,\sigma_t}$ will be non positive, so they will not give any contribution to the formula.
 Indeed, first of all notice that for $I, I' \subseteq \{1, \ldots, s\}$ and $t, t' \in \mathbb N$ such that
$ |I| + t = |I'| + t',$
 then
\begin{equation} \label{samemultiplicities}
k_{I,\sigma_t} = (|I| + t)m - (|I| + t-1)d = k_{I', \sigma_{t'}}. 
\end{equation}
If $k_{\sigma_t}>0$, then $\sigma_t$ would be contained in the base locus of $|D|$ at least $k_{\sigma_t}$ times, but if $t > n/2$ then $\sigma_t$ fills up the ambient space and this would contradict the effectivity of $D$. Thus, for $t > n/2$, $k_{\sigma_t}$ must be negative or zero. 
 As a consequence, by (\ref{samemultiplicities}), for any index set $I\subseteq \{1, \ldots, s\}$ and $t\in \mathbb N$ such that $|I| + t > n/2$, the multiplicity of $J(L_I, \sigma_t)$ is
$$ k_{I,\sigma_t}  = k_{\sigma_{|I|+t}} \leq 0. $$
 This implies that in the summation (\ref{homogeneous}), all terms that do not satisfy $0\leq |I| + t \leq n/2 $  vanish. 

We shall separate the terms in  (\ref{homogeneous}) with $t=0$ from those with  $t\geq 1$ so that
\begin{equation} \label{homeq} h^0(X_s^n,D) = B_0 + B_1, \end{equation}
 where
\begin{align*}B_0 &= \sum_{\substack{0\leq |I| \leq n/2 \\ 1 \notin I}} (-1)^{|I|}F_0(n+k_I-r_I-1, s-1, \varepsilon, n), \\
B_1 & = \sum_{\substack{0\leq |I| + t \leq n/2 \\ t\geq 1, \; 1 \notin I}} (-1)^{|I|}F_t(n+k_{I,\sigma_t} - r_{I,\sigma_t} -1, s-1, \varepsilon, n). 
\end{align*}
Since the functions $F_0$ are independent of the parameter $s$, by its definition (\ref{F_0}), we may make the following substitutions in the summands of $B_0$:
$$ F_0(n+k_I-r_I-1, s-1, \varepsilon, n) = F_0(n+k_I-r_I-1, s, \varepsilon, n).$$
Applying Property \ref{property2} to the terms of $B_1$ we obtain
\begin{align*}
F_t(n+k_{I,\sigma_t} - r_{I,\sigma_t} -1, s-1, \varepsilon, n)  = & F_t(n+k_{I,\sigma_t} - r_{I,\sigma_t} -1, s, \varepsilon, n) \\
											& - F_t(n+k_{I,\sigma_t} - r_{I,\sigma_t} -1+1, s, \varepsilon, n).
\end{align*}
 Plugging everything into (\ref{homeq}), we get that
\begin{align}
h^0(X_s^n,D) = & \sum_{\substack{0\leq |I| \leq n/2 \\ 1 \notin I}} (-1)^{|I|}F_0(n+k_I-r_I-1, s, \varepsilon, n) \nonumber\\
		&+ \sum_{\substack{0\leq |I| + t \leq n/2 \\ t\geq 1, \; 1 \notin I}} (-1)^{|I|}F_t(n+k_{I,\sigma_t} - r_{I,\sigma_t} -1, s, \varepsilon, n) \label{homsum3} \\
		&- \sum_{\substack{0\leq |I| + t \leq n/2 \\ t\geq 1, \; 1 \notin I}} (-1)^{|I|}F_{t-1}(n+k_{I,\sigma_t} - r_{I,\sigma_t} -1+1, s, \varepsilon, n). \nonumber
\end{align}
We now look at the summation in the third line of (\ref{homsum3}):  since for any $t \geq 1$ and for any  $I\subseteq \{1, \ldots, s\}$ such that $1 \notin I $, by (\ref{samemultiplicities}) we have that
$k_{I,\sigma_t} = k_{I\cup \{1\}, \sigma_{t-1}}$ and that $r_{I,\sigma_t} = r_{I\cup\{1\}, \sigma_{t-1}}+1,$
then the summation can be written as
$$  \sum_{\substack{0\leq |I| + t \leq n/2 \\ t\geq 1, \; 1 \notin I}} (-1)^{|I|}F_{t-1}(n+k_{I\cup \{1 \},\sigma_{t-1}} - r_{I\cup \{1 \},\sigma_{t-1}} -1, s, \varepsilon, n)$$
$$ =  \sum_{\substack{0\leq |I| + t \leq n/2 \\ 1 \in I}} (-1)^{|I|-1}F_t(n+k_{I,\sigma_t} - r_{I,\sigma_t} -1, s, \varepsilon, n). $$
 Replacing this latter expression in (\ref{homsum3}) and adding everything together concludes the proof.
\end{proof}

We are now ready to complete the proof of the main theorem, Theorem \ref{ThmC}

\begin{proof}[Proof of Theorem \ref{ThmC}]  
 We shall assume that  $n\geq 3$, $k_C\geq 1$, $\varepsilon < s-n-3$, and that 
there is at least one index $i\in \{1, \ldots, s\}$ that satisfies $m_i > k_C + 1$.
In fact other cases are covered in Propositions \ref{ThmdimP2}-\ref{propB}-\ref{propC}-\ref{propD}. 
   Without loss of generality, we may assume that $m_1 \geq \ldots \geq m_s \geq k_C$ and $m_1 \geq k_C +1$. 
Since most of the proof follows ideas carried out in the proofs of Propositions \ref{propB}, \ref{propC} and \ref{propD},  we will not give full details but rather just a sketch of the argument, spotlighting only  the calculations that differ from the ones seen before. 

The inductive step is (\ref{indstep}):
\begin{equation} \label{indstepgencase} h^0(X_s^n, D)= h^0(X_s^n, D+E_1) - h^0(X_s^{n-1},D_{n-1}), \end{equation}
 where 
$$ D_{n-1} =  l(D+E_1) - k_C^+(D+E_1)E_q. $$
 First of all, it is straightforward  that, since $\varepsilon < s-n-3$ and $m_1 \geq k_C +1 \geq 2$, then
$$k_C(D+E_1) = \frac{ \sum_{i=1}^s m_i - nd -1 + (\varepsilon + 1)}{s-n-2} =k_C, $$
so that $\varepsilon(D+E_1)=\varepsilon(D) + 1$. Even more, from the non-redundance of $D$ and the fact that $m_1\geq k_C+1$, it is easy to see that $D+E_1$ is non-redundant either. Therefore, by backward induction on $\varepsilon$, we have
$$h^0(X_s^n,D+E_1) =  \sum_{\substack{0\leq t \leq n/2 \\ 0\leq |I| \leq n-2t}} (-1)^{|I|}F_t (n + k_{I, \sigma_t}(D+E_1) - r_{I, \sigma_t} - 1, s, \varepsilon +1 , n). $$
 We rewrite this by means of the following relations between integers \[
k_{I, \sigma_t}(D+E_1) =
	\begin{cases}
		k_{I, \sigma_t} &\quad \text{if } 1 \notin I \\
		k_{I, \sigma_t}-1 & \quad \text{if } 1 \in I,
	\end{cases}
\]
 which yields
\begin{align}
h^0(X_s^n,D+E_1) =	 & \sum_{\substack{0\leq t \leq n/2 \\ 0\leq |I| \leq n-2t \\ 1 \notin I}} (-1)^{|I|}F_t (n + k_{I, \sigma_t} - r_{I, \sigma_t} - 1, s, \varepsilon+1, n) \nonumber \\
				& + \sum_{\substack{0\leq t \leq (n-1)/2 \\ 0\leq |I| \leq n-2t \\ 1 \in I}} (-1)^{|I|}F_t (n + k_{I, \sigma_t} -1 - r_{I, \sigma_t} - 1, s, \varepsilon+1, n). \label{sum1firsgencase}
\end{align}

 We now look at the second summand of (\ref{indstepgencase}). We have that
$$D_{n-1} = (m_1 - 1)H_{n-1} - k_C E_{1,n-1} - \sum_{i=2}^s (m_i + m_1 - 1 -d) E_{i, n-1}. $$
By running a similar calculation to that of (\ref{kcn-1eps}), but recalling that in this case $\varepsilon < s-n-3$, we get
\begin{equation} \label{kcdn-1gencase} k_C(D_{n-1}) = k_{\{1\},\sigma_1} -1, \end{equation}
and $\varepsilon(D_{n-1}) = \varepsilon +1$. Furthermore, as explained after (\ref{kcn-1eps}), it follows from the effectivity and non-redundance of $D$ that $D_{n-1}$ is non-redundant either. 
 Hence, by induction on $n$, the dimension of $H^0(X_s^{n-1},D_{n-1})$ is \begin{equation} \label{Dn-1gencase} h^0(X_s^{n-1},D_{n-1}) =  \sum_{\substack{0\leq t \leq (n-1)/2 \\ 0\leq |I| \leq n-1-2t}} (-1)^{|I|}F_t (n-1 + k_{I, \sigma_t}(D_{n-1}) - r_{I, \sigma_t} - 1, s, \varepsilon +1 , n-1). \end{equation}
Using (\ref{kcdn-1gencase}), we obtain
\begin{equation*} \label{rewritegencase1}
k_{I, \sigma_t}(D_{n-1}) =
	\begin{cases}
		k_{I\cup \{1\}, \sigma_t}-1 &\quad \text{if } 1 \notin I \\
		k_{I\setminus \{1\}, \sigma_{t+1}} & \quad \text{if } 1 \in I.
	\end{cases}
\end{equation*}
It is also easy to see that 
\[
r_{I, \sigma_t} =
	\begin{cases}
		r_{I\cup \{s\}, \sigma_t}-1 &\quad \text{if } s \notin I \\
		r_{I\setminus \{s\}, \sigma_{t+1}}-1 & \quad \text{if } s \in I.
	\end{cases}
\]
Substituting in (\ref{Dn-1gencase}), we obtain
\begin{align*}
h^0(X_s^{n-1}, D_{n-1}) =&
 \sum_{\substack{0\leq t \leq (n-1)/2 \\ 0\leq |I| \leq n-2t \\ 1 \in I}}  (-1)^{|I|-1}F_t (n + k_{I, \sigma_t} -1 - r_{I, \sigma_t} - 1, s, \varepsilon+1, n-1) \\
& + \sum_{\substack{1 \leq t \leq n/2 \\ 0 \leq |I| \leq n-2t \\ 1 \notin I }}  (-1)^{|I|-1}F_{t-1} (n + k_{I, \sigma_t} - r_{I, \sigma_t} - 1, s, \varepsilon+1, n-1). \end{align*}
We will now show that by plugging the latter expression and (\ref{sum1firsgencase})  into (\ref{indstepgencase}) we conclude the proof. To see this, we separate the terms with $t=0$ of the first sum of (\ref{sum1firsgencase}) from those with  $1\leq t \leq n/2 $ obtaining \begin{equation} \label{generalcasefnal} h^0(X,D) =  S_0 + S_1 + S_2,  \end{equation}
 where
$$S_0 = \sum_{\substack{0 \leq |I| \leq n-2t \\ 1 \notin I }}    (-1)^{|I|}F_0 (n + k_{I, \sigma_t} - r_{I, \sigma_t} - 1, s, \varepsilon+1, n), \quad \quad \quad$$
\begin{align*}
S_1 =  &\sum_{\substack{1 \leq t \leq n/2 \\ 0 \leq |I| \leq n-2t \\ 1 \notin I }}    (-1)^{|I|}F_t (n + k_{I, \sigma_t} - r_{I, \sigma_t} - 1, s, \varepsilon+1, n) \\
		&+ \sum_{\substack{1 \leq t \leq n/2 \\ 0 \leq |I| \leq n-2t \\ 1 \notin I }}    (-1)^{|I|}F_{t-1} (n + k_{I, \sigma_t} - r_{I, \sigma_t} - 1, s, \varepsilon+1, n-1),
\end{align*}
\begin{align*}
S_2   =   &  \sum_{\substack{0\leq t \leq (n-1)/2 \\ 0\leq |I| \leq n-2t \\ 1 \in I}} (-1)^{|I|}F_t (n + k_{I, \sigma_t}  -1 - r_{I, \sigma_t} - 1, s, \varepsilon+1, n) \\
 		& +  \sum_{\substack{0\leq t \leq (n-1)/2 \\ 0\leq |I| \leq n-2t \\ 1 \in I}} (-1)^{|I|}F_t (n + k_{I, \sigma_t}  -1 - r_{I, \sigma_t} - 1, s, \varepsilon+1, n-1) .
\end{align*}
Now, $F_0$ is independent of $\varepsilon$. Therefore, in $S_0$ we can make the following replacement
$$ F_0 (n + k_{I, \sigma_t} - r_{I, \sigma_t} - 1, s, \varepsilon+1, n)= F_0 (n + k_{I, \sigma_t} - r_{I, \sigma_t} - 1, s, \varepsilon, n). $$
Applying Property \ref{property6} to $S_1$ and Property \ref{property7} to $S_2$, we obtain
\begin{align*} 
S_1& =\sum_{\substack{ 1\leq t \leq n/2 \\ 0\leq |I| \leq n-2t \\ 1 \notin I }}    (-1)^{|I|}F_t (n + k_{I, \sigma_t} - r_{I, \sigma_t} - 1, s, \varepsilon, n), \\
S_2& =\sum_{\substack{0 \leq t \leq (n-1)/2 \\ 0\leq |I| \leq n-2t \\ 1 \in I}}   (-1)^{|I|}F_t (n + k_{I, \sigma_t}  - r_{I, \sigma_t} - 1, \varepsilon, n).  \end{align*}
 Putting everything together in (\ref{generalcasefnal}), we conclude that
$$ h^0(X,D) = 	\sum_{\substack{0 \leq t \leq n/2 \\ 0\leq |I| \leq n-2t}} (-1)^{|I|}F_t (n + k_{I, \sigma_t} - r_{I, \sigma_t} - 1, s,\varepsilon, n). $$

\end{proof}


\appendix
\section{}\label{Fiveproperties}

In this section we prove  the five properties of the function $F_t$, stated in Section \ref{ProofMainTheorem}. It is convenient to recall the definition of 
the function here:
\begin{equation} \label{F_tappendix} F_t(a, s, \varepsilon , n) := \binom{a}{n} + \sum_{i=1}^t \binom{s-n-4 + i}{i} \binom{a+i}{n} - \sum_{i=1}^t \binom{\varepsilon}{i}F_{t-i}(a,s,\varepsilon, n-i). \end{equation}
Before we proceed with the proof of the properties hold, we  need  to 
establish a further identity.

\begin{lemma} \label{property1} For every $t, s, \varepsilon, n \in \mathbb N$ and for every $a \in \mathbb Z$, we have
$$ F_t(a,s,\varepsilon, n)=F_t(a-1, s, \varepsilon, n) + F_t(a-1, s-1, \varepsilon, n-1).$$
\end{lemma}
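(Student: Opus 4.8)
The plan is to prove the identity by induction on $t$, the point being that the defining formula \eqref{F_tappendix} for $F_t$ is assembled from binomial coefficients plus lower-order terms $F_{t-i}$, and that the claimed relation is itself a Pascal-type splitting of each of those pieces. Throughout I will use Pascal's rule $\binom{x}{y} = \binom{x-1}{y} + \binom{x-1}{y-1}$ repeatedly. For the base case $t=0$, recall $F_0(a,s,\varepsilon,n) = \binom{a}{n}$, so the claim reduces to $\binom{a}{n} = \binom{a-1}{n} + \binom{a-1}{n-1}$, which is exactly Pascal's rule.

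For the inductive step, assume the identity holds for every index strictly smaller than $t$. The first observation I would record is that the structural constant $\binom{s-n-4+i}{i}$ appearing in \eqref{F_tappendix} is invariant under the simultaneous shift $(s,n) \mapsto (s-1,n-1)$, since $(s-1)-(n-1)-4+i = s-n-4+i$. I would then expand the sum $F_t(a-1,s,\varepsilon,n) + F_t(a-1,s-1,\varepsilon,n-1)$ and compare it, term by term, against the three pieces of $F_t(a,s,\varepsilon,n)$: the leading term $\binom{a}{n}$, the middle sum $\sum_{i=1}^t \binom{s-n-4+i}{i}\binom{a+i}{n}$, and the correction sum $\sum_{i=1}^t \binom{\varepsilon}{i} F_{t-i}(a,s,\varepsilon,n-i)$.

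The leading terms combine by Pascal as $\binom{a-1}{n} + \binom{a-1}{n-1} = \binom{a}{n}$, and, using the invariance of the constants just noted, each summand of the middle sum combines likewise as $\binom{a-1+i}{n} + \binom{a-1+i}{n-1} = \binom{a+i}{n}$. For the correction sum I would apply the inductive hypothesis to the index $t-i < t$ at the node $(a,s,\varepsilon,n-i)$, which yields exactly $F_{t-i}(a-1,s,\varepsilon,n-i) + F_{t-i}(a-1,s-1,\varepsilon,n-1-i) = F_{t-i}(a,s,\varepsilon,n-i)$; summing this against $-\binom{\varepsilon}{i}$ over $i$ reproduces the correction term of $F_t(a,s,\varepsilon,n)$. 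Adding the three reconstituted pieces gives the claim.

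The only point requiring care — and it is the mildest possible obstacle — is the bookkeeping of the shifts: one must check that $(s-1,n-1)$ leaves the constants $\binom{s-n-4+i}{i}$ unchanged, and that the inductive hypothesis is invoked at the matching node, so that the third argument $(n-i)-1$ of the shifted lower-order term lines up with $n-1-i$ as it appears in $F_t(a-1,s-1,\varepsilon,n-1)$. Once these identifications are in place, every cancellation is an instance of Pascal's rule and the induction closes.
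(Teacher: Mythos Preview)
Your proof is correct and follows essentially the same approach as the paper: induction on $t$, with the base case given by Pascal's rule and the inductive step obtained by splitting each of the three pieces of \eqref{F_tappendix} via Pascal's rule on the binomial terms and the inductive hypothesis on the lower-order $F_{t-i}$ terms. Your explicit remark that the structural constant $\binom{s-n-4+i}{i}$ is invariant under the shift $(s,n)\mapsto(s-1,n-1)$ is the key bookkeeping point the paper uses implicitly.
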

\begin{proof}
The proof follows by a simple induction on $t$ and, as in many of these properties, by exploiting the standard identity of binomials
\begin{equation} \label{eq1} \binom{a}{n} = \binom{a-1}{n} + \binom{a-1}{n-1}.\end{equation}
In this case, \eqref{eq1} establishes the case $t=0$.  We we fix $t$ and we assume the statement holds for all integers up to $t-1$. In particular, for every $i \in \{1, \ldots, t\}$,  we have
\begin{equation} \label{eq3} F_{t-i}(a,s, \varepsilon, n-i) = F_{t-i}(a-1,s,\varepsilon, n-i) +  F_{t-i}(a-1,s-1,\varepsilon, n-1-i). \end{equation}
Moreover the following equality holds:
\begin{equation} \label{eq2} \binom{a+i}{n} = \binom{a+i-1}{n} + \binom{a+i-1}{n-1}. \end{equation}
We conclude by substituting (\ref{eq1}), (\ref{eq2}) and (\ref{eq3}) in the definition of $F_t(a,s,\varepsilon, n)$.
\end{proof}

\ \\
\textbf{Property \ref{property2}}. 
$F_t(a,s-1,\varepsilon, n)  + F_{t-1}(a+1, s, \varepsilon, n)=F_t(a,s, \varepsilon, n)$, for every $t, s, \varepsilon, n \in \mathbb N$ with $t\geq 1$, and every $a \in \mathbb Z$.
\begin{proof}
We use induction on $t$. The case $t=1$ follows immediately. Assume that the property is true up to $t-1$, then
\begin{align*}
F_t(a,s,\varepsilon, n) 	= 	& \binom{a}{n} + \sum_{i=1}^t \binom{s-n-4 + i}{i} \binom{a+i}{n} - \sum_{i=1}^t \binom{\varepsilon}{i}F_{t-i}(a,s,\varepsilon, n-i)  
\end{align*}
\begin{align*}
\;\;	\quad \quad\quad \quad = 	& \binom{a}{n} + \sum_{i=1}^t \left(\binom{s-1-n-4 + i}{i} + \binom{s-1-n-4 + i}{i-1} \right) \binom{a+i}{n} \\
						&- \sum_{i=1}^{t-1} \binom{\varepsilon}{i}\left(F_{t-i}(a,s-1,\varepsilon, n-i)  + F_{t-1-i}(a+1, s, \varepsilon, n-i) \right) \\
					        &- \binom{\varepsilon}{t} F_0(a,s,\varepsilon, n-t) .		
\end{align*}
 Rearranging this expression and observing that $F_0$ is independent of $s$ finishes the proof. 

\end{proof}

The following property allows to rewrite $F_t$ when $\varepsilon = s-n-3$.
\begin{lemma} \label{property3} For every $t, a, s, \varepsilon, n \in \mathbb N$, with $s\geq n+3$, we have
$$F_t(a,s,s-n-3,n) = \binom{a}{n} + \sum_{i=1}^t\binom{s-n-4+i}{i} \binom{a}{n}. $$
\end{lemma}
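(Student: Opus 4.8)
The plan is to show that, at the resonant value $\varepsilon = s-n-3$, the function $F_t(a,s,\varepsilon,n)$ collapses to a constant multiple of $\binom{a}{n}$, namely the coefficient $K := \sum_{i=0}^{t}\binom{s-n-4+i}{i}$ asserted in the statement. Regarding $F_t(a,s,s-n-3,n)$ as a polynomial in $a$, the definition \eqref{F_tappendix} gives at once a decomposition $F_t(a,s,s-n-3,n) = K\binom{a}{n} + L(a)$ with $\deg_a L \le n-1$: indeed each shifted term $\binom{a+i}{n}$ contributes $\binom{a}{n}$ plus lower-degree terms, so the $\binom{a}{n}$-part accumulates $1+\sum_{i=1}^t\binom{s-n-4+i}{i}=K$, while the subtracted terms $\binom{\varepsilon}{i}F_{t-i}(a,s,\varepsilon,n-i)$ are polynomials of degree $n-i<n$. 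Thus the whole statement is equivalent to the vanishing $L\equiv 0$, and this is where the work lies. I would organize everything as an induction on $t$ (with $t=0$ immediate from $F_0(a,s,\varepsilon,n)=\binom{a}{n}$), reducing to the claim that $F_t(a,s,s-n-3,n)$ vanishes for $a\in\{0,1,\dots,n-1\}$.

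The key simplification for the roots is Lemma~\ref{property1}, because the substitution $\varepsilon = s-n-3$ is \emph{preserved} by it: in
\[
F_t(a,s,s-n-3,n)=F_t(a-1,s,s-n-3,n)+F_t(a-1,s-1,s-n-3,n-1),
\]
the last summand is again resonant, since $(s-1)-(n-1)-3=s-n-3$. Running an induction on $n$ (base case $n=0$, direct from the definition), the inductive hypothesis makes this last term vanish whenever its argument $a-1$ lies in $\{0,\dots,n-2\}$, that is for $1\le a\le n-1$; hence $F_t(a,s,s-n-3,n)$ is \emph{constant} on $a\in\{0,1,\dots,n-1\}$. Since $\binom{j}{n}=0$ there, this forces $L$ to agree with a constant at the $n$ points $0,\dots,n-1$, and as $\deg L\le n-1$ we conclude $L\equiv L(0)=F_t(0,s,s-n-3,n)$. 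In this way the entire statement is reduced to the single boundary evaluation $F_t(0,s,s-n-3,n)=0$.

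The main obstacle is therefore this one evaluation. For $n>t$ it is automatic, because the expansion of $F_t$ in the basis $\{\binom{a}{n-l}\}_{0\le l\le t}$ only involves $\binom{0}{n-l}$ with $n-l\ge 1$; the genuine content is the range $1\le n\le t$. Here the difficulty is that the subtracted terms $\binom{s-n-3}{i}F_{t-i}(0,s,s-n-3,n-i)$ in \eqref{F_tappendix} are \emph{off-resonant}: at dimension $n-i$ the resonant exceeding number would be $s-(n-i)-3=(s-n-3)+i$, so each is off resonance by exactly $i$, and the inductive hypothesis does not apply to it directly. I would control these by using Property~\ref{property2} to step the parameter $s$ downwards $i$ times, each application lowering the off-resonance defect by one at the cost of a term of strictly smaller $t$ (handled by the induction on $t$), until $F_{t-i}(0,\cdot,s-n-3,n-i)$ is brought back to resonance and evaluated by the hypothesis. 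After this reduction the identity one is left to verify is of absorption type, namely $i\binom{s-n-4+i}{i}=(s-n-3)\binom{s-n-4+i}{i-1}$ followed by the hockey-stick identity \eqref{hockey}; these close the argument. I expect the bookkeeping in this final step — tracking the off-resonant reduced-dimension terms through the repeated use of Property~\ref{property2} — to be the most delicate part of the proof.
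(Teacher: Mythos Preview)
Your approach and the paper's share the same engine: Lemma~\ref{property1} preserves the resonance $\varepsilon=s-n-3$ (since $(s-1)-(n-1)-3=s-n-3$), so induction on $n$ is available, and repeated application drives the problem down to small $a$. Your polynomial-interpolation wrapper is a genuine streamlining on one front: once $L\equiv0$ is known, the identity follows for every $a\ge0$ at once, whereas the paper runs a separate induction on $a$ (base case $a=n$, handled by one further use of Lemma~\ref{property1}) to cover the range $a\ge n$.

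The substantive difference is in the endgame for small $n$. The paper does not appeal to Property~\ref{property2} at all: it treats $n=0$ trivially and $n=1$ by an explicit computation that boils down to the identity $\sum_{i=1}^{t} i\binom{s-5+i}{i}=(s-4)\binom{s-4+t}{t-1}$, established by a short induction on $t$; for $n\ge2$ it then continues the Lemma~\ref{property1} recursion in $a$ past $0$ all the way to $a<n-t$, where $F_t(a,s,\varepsilon,n)=0$ directly from the definition for \emph{any} $\varepsilon$. Your off-resonance reduction through Property~\ref{property2} is plausible in outline but heavier than needed: each application spawns a new $F_{t-i-1}$-term at yet another value of $s$, and bringing all of these back to resonance simultaneously is exactly the bookkeeping you flag as delicate. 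Since in every application in the paper one has $t\le n/2$ (hence $n>t$, so your ``automatic'' case already covers everything), the cleanest fix is to drop the Property~\ref{property2} machinery and adopt the paper's direct $n=1$ base case instead.
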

\begin{proof}
 We prove it by double induction on $n$ and $a$.
 
 The case $n=0$ is obviously true. 
If $n=1$ we have that
\begin{align*}
F_t(a,s,s-4,1) = & \ a + \sum_{i=1}^t \binom{s-5+i}{i}(a+i) - (s-4)\left(1 + \sum_{i=1}^{t-1}\binom{s-4+i}{i} \right)\\
		     = & \ a + \sum_{i=1}^t \binom{s-5+i}{i}a+ \sum_{i=1}^t \binom{s-5+i}{i}i - (s-4) \binom{s-4+t}{t-1} .
\end{align*}
 We claim that
$$ \sum_{i=1}^t \binom{s-5+i}{i}i - (s-4) \binom{s-4+t}{t-1}  = 0 , $$
 which implies that the property holds for $n=1$. We  prove the claim by induction on $t$. It clearly holds for $t=1$. Assuming that the claim holds for $t-1$, we have
\begin{align*}
 \sum_{i=1}^t \binom{s-5+i}{i}i - (s-4) \binom{s-4+t}{t-1}   
 =& \left(\sum_{i=1}^{t-1} \binom{s-5+i}{i} i - (s-4)\binom{s-4+t-1}{t-2}\right) \\
& + \left( \binom{s-5+t}{t}t - (s-4)\binom{s-4+t-1}{t-1} \right) \\
=& \ 0 + \left(\frac{(s-5+t)!}{(t-1)!(s-5)!} - \frac{(s-5+t)!}{(t-1)!(s-5)!}\right) \\
=&\ 0. \end{align*}
This concludes the proof of the claim. From now on we may assume the property holds for up to $n-1$. 

To apply induction on $a$, we first notice that the statement is true for any $a<n$ since for those cases 
$F_t(a,s, s-n-3, n) = 0. $ Indeed, Lemma \ref{property1} implies that $$ F_t(a, s, s-n-3,n) = F_t(a-1, s, s-n-3, n) + F_t(a-1, s-1, s-n-3, n-1). $$
 The second summand vanishes by the induction hypothesis on $n$. Thus, we can apply this recursively until we have that $a<n-t$. It is easy to see from the definition of $F_t$ that for any such $a<n-t$, we have
$F_t(a,s,\varepsilon, n) = 0, $
 for any value of $\varepsilon$, so in particular also for $\varepsilon=s-n-3$.

 Let us now prove the case $a=n$. Again, applying Lemma \ref{property1} we get
$$ F_t(n,s,s-n-3,n) = F_t(n-1, s , s-n-3, n) + F_t(n-1, s-1, s-n-3, n-1). $$
 The first summand vanishes from the previous discussion; as for the second summand, the induction hypothesis on $n$ gives
\begin{align*}
 F_t(n,s,s-n-3,n) = & \binom{n-1}{n-1} + \sum_{i=1}^t \binom{s-1-(n-1) - 4 +i}{i}\binom{n-1}{n-1} \\
	                = & \binom{n}{n} + \sum_{i=1}^t \binom{s-n - 4 +i}{i}\binom{n}{n}.
\end{align*}
This proves the claim. 

Let us now assume that the property holds for $a-1$. By Lemma \ref{property1} and the induction hypothesis on $a$ and $n$, we conclude that
\begin{align*}
 F_t(a,s,s-n-3,n) = &
  F_t(a-1,s,s-n-3,n) + F_t(a-1,s-1,s-n-3, n-1)  \\
 = &\binom{a-1}{n} + \sum_{i=1}^t\binom{s-n-4+i}{i} \binom{a-1}{n} \\
& + \binom{a-1}{n-1} + \sum_{i=1}^t \binom{s-n-4+i}{i} \binom{a-1}{n-1} \\
= & \binom{a}{n} + \sum_{i=1}^t \binom{s-n-4+i}{i} \binom{a}{n}. \end{align*}
\end{proof}

\vspace{5mm}

In  \eqref{hockey} we introduced a useful binomial identity called the hockey-stick identity. The mirror image of this identity on Pascal's triangle  consists of a reformulation of the latter that will be useful proving Property \ref{property4}. This is often referred to as  the \emph{Christmas-stocking identity}. 

\begin{lemma} \label{christmas} For any two natural numbers $b, t$, we have 
$$ \sum_{i=0}^{t-1} \binom{b+ i }{i} = \binom{b + t}{t-1}. $$
\end{lemma}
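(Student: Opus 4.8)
The plan is to recognise this as a mild repackaging of the hockey-stick identity \eqref{hockey}, which is already available in the text. The only discrepancy between the two statements is notational: in \eqref{hockey} the lower index of each binomial is held fixed while the summation variable sits in the upper index, whereas in the Christmas-stocking form the lower index $i$ runs together with the summation variable. First I would remove this discrepancy by the symmetry relation $\binom{b+i}{i}=\binom{b+i}{b}$, rewriting the left-hand side as $\sum_{i=0}^{t-1}\binom{b+i}{b}$, which is now exactly of the shape appearing on the left of \eqref{hockey}.

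Next I would invoke \eqref{hockey} taking the fixed lower index to be $b$ and the upper summation limit to be $t-1$, which yields
$$\sum_{i=0}^{t-1}\binom{b+i}{b}=\binom{b+t}{b+1}.$$
A final application of binomial symmetry, $\binom{b+t}{b+1}=\binom{b+t}{t-1}$, produces precisely the claimed right-hand side, completing the argument.

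For a self-contained alternative I would instead induct on $t$: the base case $t=1$ is the trivial equality $\binom{b}{0}=\binom{b+1}{0}$, and in the inductive step I would add the term $\binom{b+t}{t}$ to both sides of the hypothesis and collapse $\binom{b+t}{t-1}+\binom{b+t}{t}$ to $\binom{b+t+1}{t}$ by Pascal's rule, which is exactly the right-hand side at $t+1$. I expect no genuine obstacle here: the identity is elementary, and the only point demanding attention is the bookkeeping of the index conventions together with the off-by-one in the upper summation limit, both of which are dispatched cleanly by the symmetry step.
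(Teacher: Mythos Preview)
Your argument is correct. The paper does not actually supply a proof of this lemma; it merely states the identity and remarks in the preceding sentence that it is the mirror image of the hockey-stick identity \eqref{hockey} on Pascal's triangle, i.e.\ a reformulation of that identity. Your derivation via binomial symmetry $\binom{b+i}{i}=\binom{b+i}{b}$ followed by an application of \eqref{hockey} and a second symmetry step is precisely the reformulation the paper alludes to, so your approach matches the paper's intent exactly.
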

\ \\
\textbf{Property \ref{property4}.}  $F_t(a,s, 0, n) + F_{t-1}(a, s, 0, n-1) = F_t(a+t, s, s-n-3, n)$, for every $t, a, s, n \in \mathbb N$, with $t\geq 1$ and $s\geq n+3$.
\begin{proof}
We will prove that
$$F_t(a,s, 0, n) + F_{t-1}(a, s, 0, n-1) = \binom{a+t}{n} + \sum_{i=1}^t\binom{s-n-4+i}{i} \binom{a+t}{n}, $$
 and then conclude by Lemma \ref{property3}. We proceed by induction on $t$. It clearly holds for $t=1$. Thus, we assume that the property holds for $t-1$.

Since in this case $\varepsilon=0$,    the following identity follows from \eqref{F_tappendix}:
$$ F_t(a,s, 0, n)=F_{t-1}(a,s,0,n) + \binom{s-n-4+t}{t} \binom{a+t}{n}.$$
Applying the same identity  to the term $ F_{t-1}(a,s, 0, n-1)$, we obtain
%
$$ F_t(a,s, 0, n) + F_{t-1}(a, s, 0, n-1) = $$
$$ = F_{t-1}(a,s,0,n) + \binom{s-n-4+t}{t} \binom{a+t}{n} $$
$$  + F_{t-2}(a, s, 0, n-1) + \binom{s-n-4+t}{t-1} \binom{a+t-1}{n-1}. $$
 Applying the induction hypothesis on $t$ in the latter expression, and  using Lemma \ref{christmas}, we obtain
$$ F_t(a,s, 0, n) + F_{t-1}(a, s, 0, n-1) = $$
$$ =  F_{t-1}(a+t-1, s, s-n-3, n) + \binom{s-n-4+t}{t} \binom{a+t}{n}. $$
$$ + \left(1 + \sum_{i=1}^{t-1} \binom{s-n-4+i}{i} \right) \binom{a+t-1}{n-1}. $$
 Finally, by applying Lemma \ref{property3} to the latter expression, we arrive to
$$ F_t(a,s, 0, n) + F_{t-1}(a, s, 0, n-1) = $$
$$ = \binom{a+t-1}{n} + \sum_{i=1}^{t-1} \binom{s-n-4+i}{i} \binom{a+t-1}{n} $$
$$ + \binom{s-n-4+t}{t} \binom{a+t}{n} $$
$$  + \left(1 + \sum_{i=1}^{t-1} \binom{s-n-4+i}{i} \right) \binom{a+t-1}{n-1} $$
$$   =  \binom{a+t}{n} + \sum_{i=1}^{t} \binom{s-n-4+i}{i} \binom{a+t}{n}  =   F_t(a+t, s, s-n-3, n). $$
\end{proof}
\ \\
\textbf{Property  \ref{property5}.} 
$F_t(a,s, 0, n) + F_{t}(a, s, 0, n-1) = F_t(a+t+1, s, s-n-3, n),$ for every $t, a, s, n \in \mathbb N$ with $s\geq n+3$.
\begin{proof}
This property follows from Lemma \ref{property3}, Property \ref{property4} and Lemma \ref{christmas}. Indeed,
$$ F_t(a,s,0,n) + F_t(a,s,0,n-1) = $$
$$ = F_t(a,s,0,n) + F_{t-1}(a,s,0,n-1) + \binom{s-n-4+t+1}{t} \binom{a+t}{n-1} $$ 
$$ =\binom{a+t}{n} + \sum_{i=1}^{t} \binom{s-n-4+i}{i} \binom{a+t}{n} + \left(1+ \sum_{i=1}^t \binom{s-n-4+i}{i} \right) \binom{a+t}{n-1} $$
$$ = \binom{a+t+1}{n} + \sum_{i=1}^{t} \binom{s-n-4+i}{i} \binom{a+t+1}{n}  = F_t(a+t+1, s, s-n-3, n). $$
\end{proof}
\ \\ 
\textbf{Property \ref{property6}.} 
$F_t(a,s, \varepsilon, n) + F_{t-1}(a,s, \varepsilon, n-1) = F_t(a, s, \varepsilon -1,n)$, for all $t, a, s, \varepsilon, n \in \mathbb{N}$ with  $\varepsilon \neq 0$.
\begin{proof}
We will prove it by induction on $t$. The case $t=1$ follows easily. Let us assume the property is true for up to $t-1$. By expanding the expression of each function according to its definition \eqref{F_tappendix}, and rearranging some terms, we arrive to
$$  F_t(a,s,\varepsilon, n) + F_{t-1}(a, s, \varepsilon, n-1) 	=  $$
$$ =  \binom{a}{n} + \sum_{i=1}^t \binom{s-n-4 + i}{i} \binom{a+i}{n} $$
$$  - \sum_{i=1}^{t-1} \binom{\varepsilon}{i} \left(F_{t-i}(a,s,\varepsilon, n-i) +F_{(t-1)-i}(a,s,\varepsilon, (n-1)-i) \right)  $$
$$  + \binom{\varepsilon}{t}F_0(a,s,\varepsilon, n-t) +  \binom{a}{n-1} + \sum_{i=1}^{t-1}\binom{s-(n-1)-4 + i}{i} \binom{a+i}{n-1} .$$
 Applying the induction hypothesis on $t$ and recalling that $F_0$ is independent from $\varepsilon$, we get
$$  F_t(a,s,\varepsilon, n) + F_{t-1}(a, s, \varepsilon, n-1) 	=  $$
$$ =  \binom{a}{n} + \sum_{i=1}^t \binom{s-n-4 + i}{i} \binom{a+i}{n} - \sum_{i=1}^{t} \binom{\varepsilon}{i} F_{t-i}(a,s,\varepsilon-1, n-i)  $$
$$ + \binom{a}{n-1} + \sum_{i=1}^{t-1}\binom{s-(n-1)-4 + i}{i} \binom{a+i}{n-1},  $$
To conclude the proof we use the fact that
$$ \binom{\varepsilon}{i} = \binom{\varepsilon -1}{i} + \binom{\varepsilon -1}{i-1}. $$
Substituting the latter in the equation above,  we have
$$  F_t(a,s,\varepsilon, n) + F_{t-1}(a, s, \varepsilon, n-1) 	=  $$
$$ =  \left( \binom{a}{n} + \sum_{i=1}^t \binom{s-n-4 + i}{i} \binom{a+i}{n} - \sum_{i=1}^{t} \binom{\varepsilon-1}{i} F_{t-i}(a,s,\varepsilon-1, n-i)  \right)$$
$$ - \sum_{i=1}^{t} \binom{\varepsilon-1}{i-1} F_{t-i}(a,s,\varepsilon-1, n-i) + \binom{a}{n-1} + \sum_{i=1}^{t-1}\binom{s-(n-1)-4 + i}{i} \binom{a+i}{n-1} $$
\vspace{2mm}
$$ = F_t(a,s,\varepsilon-1,n) - F_{t-1}(a,s,\varepsilon-1, n-1) - \sum_{i=2}^{t}\binom{\varepsilon-1}{i-1}F_{t-i}(a,s,\varepsilon-1,n-i) $$
$$ + \binom{a}{n-1} + \sum_{i=1}^{t-1}\binom{s-(n-1)-4 + i}{i} \binom{a+i}{n-1} = $$
\vspace{2mm}
$$ = F_t(a,s,\varepsilon-1,n) - F_{t-1}(a,s,\varepsilon-1, n-1) - \sum_{i=1}^{t-1}\binom{\varepsilon-1}{i}F_{t-1-i}(a,s,\varepsilon-1,n-1-i) $$
$$ + \binom{a}{n-1} + \sum_{i=1}^{t-1}\binom{s-(n-1)-4 + i}{i} \binom{a+i}{n-1} = $$
\vspace{2mm}
$$= F_t(a,s,\varepsilon-1,n) - F_{t-1}(a,s,\varepsilon-1, n-1) +F_{t-1}(a,s,\varepsilon-1, n-1) =  F_t(a,s,\varepsilon-1,n). $$
\end{proof}
\ \\ 
\textbf{Property \ref{property7}.}
$F_t(a,s, \varepsilon, n) + F_t(a, s, \varepsilon, n-1) = F_t(a+1, s, \varepsilon -1, n)$, for every $t, a, s, \varepsilon, n \in \mathbb{N}$ with $\varepsilon \neq 0 $.
\begin{proof}
The proof is very similar to the proof of Property \ref{property6}and  it is by induct ion on $t$. For $t=0$ the result is obviously true, and it easily follows for $t=1$. We now assume the property to be true up to $t-1$. By expanding each function according to its definition \eqref{F_tappendix} and using the induction hypothesis we see that
$$F_t(a,s,\varepsilon,n) + F_t(a,s,\varepsilon,n-1) = $$
$$ =  \binom{a}{n} + \sum_{i=1}^t \binom{s-n-4 + i}{i} \binom{a+i}{n} - \sum_{i=1}^t \binom{\varepsilon}{i}F_{t-i}(a+1,s,\varepsilon-1, n-i) $$
$$ +  \binom{a}{n-1} + \sum_{i=1}^t \binom{s-n-4 + i +1}{i} \binom{a+i}{n-1}. $$
 We now rewrite
$$ \binom{s-n-4+i+1}{i} = \binom{s-n-4+i}{i} + \binom{s-n-4+i}{i-1}, $$
 and for all $i \in \{0, \ldots, t\}$
$$ \binom{a+i}{n} + \binom{a+i}{n-1}= \binom{a+1+i}{n}. $$ 
 Thus, we arrive to
$$F_t(a,s,\varepsilon,n) + F_t(a,s,\varepsilon,n-1) = $$
$$ =  \binom{a+1}{n} + \sum_{i=1}^t \binom{s-n-4 + i}{i} \binom{a+1+i}{n} - \sum_{i=1}^t \binom{\varepsilon}{i}F_{t-i}(a+1,s,\varepsilon-1, n-i) $$
$$ + \binom{a+1}{n-1} + \sum_{i=1}^{t-1} \binom{s-n-4 + i +1}{i} \binom{a+1+i}{n-1}. $$
 Finally, as in the proof of Property \ref{property6}, we conclude by using the identity
$$ \binom{\varepsilon}{i} = \binom{\varepsilon -1}{i} + \binom{\varepsilon -1}{i-1} $$
and rearranging terms. Indeed, by doing so one arrives to
$$F_t(a,s,\varepsilon,n) + F_t(a,s,\varepsilon,n-1) = $$
$$ =  F_t(a+1,s,\varepsilon-1,n) - F_{t-1}(a+1, s, \varepsilon -1, n-1) + F_{t-1}(a+1, s, \varepsilon -1, n-1)$$
$$ =   F_t(a+1,s,\varepsilon-1,n). $$
\end{proof}

\begin{bibdiv}
\begin{biblist}

\bib{AlHi}{article}{
    AUTHOR = {Alexander, J.}
    AUTHOR = {Hirschowitz, A.},
     TITLE = {Polynomial interpolation in several variables},
   JOURNAL = {J. Algebraic Geom.},
  FJOURNAL = {Journal of Algebraic Geometry},
    VOLUME = {4},
      YEAR = {1995},
    NUMBER = {2},
     PAGES = {201--222},
      ISSN = {1056-3911},
   MRCLASS = {14N10 (14F17 14Q15)},
  MRNUMBER = {1311347 (96f:14065)},
MRREVIEWER = {Fyodor L. Zak},
}

\bib{AM}{article}{
    AUTHOR = {Araujo, C.}
    AUTHOR={Massarenti, A.},
     TITLE = {Explicit log {F}ano structures on blow-ups of projective
              spaces},
   JOURNAL = {Proc. Lond. Math. Soc. (3)},
  FJOURNAL = {Proceedings of the London Mathematical Society. Third Series},
    VOLUME = {113},
      YEAR = {2016},
    NUMBER = {4},
     PAGES = {445--473},
      ISSN = {0024-6115},
   MRCLASS = {14J45 (14C20 14E30)},
  MRNUMBER = {3556488},
MRREVIEWER = {Chen Jiang},
       DOI = {10.1112/plms/pdw034},
       URL = {https://doi-org.ezp.biblio.unitn.it/10.1112/plms/pdw034},
}

\bib{Bau91}{article}{
    AUTHOR = {Bauer, S.},
     TITLE = {Parabolic bundles, elliptic surfaces and {${\rm
              SU}(2)$}-representation spaces of genus zero {F}uchsian groups},
   JOURNAL = {Math. Ann.},
  FJOURNAL = {Mathematische Annalen},
    VOLUME = {290},
      YEAR = {1991},
    NUMBER = {3},
     PAGES = {509--526},
      ISSN = {0025-5831},
   MRCLASS = {14D20 (14H60 14J60)},
  MRNUMBER = {1116235},
MRREVIEWER = {Dave Auckly},
       DOI = {10.1007/BF01459257},
       URL = {https://doi-org.ezp.biblio.unitn.it/10.1007/BF01459257},
}

\bib{bocci}{article}{
    AUTHOR = {Bocci, C.},
     TITLE = {Special effect varieties in higher dimension},
   JOURNAL = {Collect. Math.},
  FJOURNAL = {Universitat de Barcelona. Collectanea Mathematica},
    VOLUME = {56},
      YEAR = {2005},
    NUMBER = {3},
     PAGES = {299--326},
      ISSN = {0010-0757},
   MRCLASS = {14C20 (14N05)},
  MRNUMBER = {2194787},
MRREVIEWER = {Fyodor L. Zak},
}

\bib{Magma}{article}{
   author={Bosma, W.},
   author={Cannon, John},
   author={Playoust, Catherine},
   title={The Magma algebra system. I. The user language},
   note={Computational algebra and number theory (London, 1993)},
   journal={J. Symbolic Comput.},
   volume={24},
   date={1997},
   number={3-4},
   pages={235--265},
   issn={0747-7171},
   review={\MR{1484478}},
   doi={10.1006/jsco.1996.0125},
}

\bib{BDPLinear}{article}{
    AUTHOR = {Brambilla, M. C.},
    AUTHOR = {Dumitrescu, O.},
    AUTHOR = {Postinghel, E.},
     TITLE = {On a notion of speciality of linear systems in {$\Bbb{P}^n$}},
   JOURNAL = {Trans. Amer. Math. Soc.},
  FJOURNAL = {Transactions of the American Mathematical Society},
    VOLUME = {367},
      YEAR = {2015},
    NUMBER = {8},
     PAGES = {5447--5473},
      ISSN = {0002-9947},
   MRCLASS = {14C20},
  MRNUMBER = {3347179},
MRREVIEWER = {Maria Luisa Spreafico},
       DOI = {10.1090/S0002-9947-2014-06212-0},
       URL = {https://doi-org.ezp.biblio.unitn.it/10.1090/S0002-9947-2014-06212-0},
}

\bib{Conjecture}{article}{
    AUTHOR = {Brambilla, M. C.},
    AUTHOR = {Dumitrescu, O.},
    AUTHOR = {Postinghel, E.},
     TITLE = {On the effective cone of {$\Bbb P^n$} blown-up at {$n+3$}
              points},
   JOURNAL = {Exp. Math.},
  FJOURNAL = {Experimental Mathematics},
    VOLUME = {25},
      YEAR = {2016},
    NUMBER = {4},
     PAGES = {452--465},
      ISSN = {1058-6458},
   MRCLASS = {14N05 (14C20)},
  MRNUMBER = {3499709},
MRREVIEWER = {Jorge Caravantes},
       DOI = {10.1080/10586458.2015.1099060},
       URL = {https://doi-org.ezp.biblio.unitn.it/10.1080/10586458.2015.1099060},
}


\bib{BO}{article}{
    AUTHOR = {Brambilla, M. C.},
    AUTHOR = {Ottaviani, G.},
     TITLE = {On the {A}lexander-{H}irschowitz theorem},
   JOURNAL = {J. Pure Appl. Algebra},
  FJOURNAL = {Journal of Pure and Applied Algebra},
    VOLUME = {212},
      YEAR = {2008},
    NUMBER = {5},
     PAGES = {1229--1251},
      ISSN = {0022-4049},
   MRCLASS = {14N05 (14C20)},
  MRNUMBER = {2387598},
MRREVIEWER = {Ciro Ciliberto},
       DOI = {10.1016/j.jpaa.2007.09.014},
       URL = {https://doi-org.ezp.biblio.unitn.it/10.1016/j.jpaa.2007.09.014},
}

 \bib{BCP}{book}{
    AUTHOR = {Bus\'e, L.},
        AUTHOR = {Catanese, F.},
            AUTHOR = {Postinghel, E.},
     TITLE = {Curves and surfaces, a history of shapes (in preparation)},
 }
 
%

\bib{CT}{article}{
    AUTHOR = {Castravet, A.-M.},
    AUTHOR = {Tevelev, J.},
     TITLE = {Hilbert's 14th problem and {C}ox rings},
   JOURNAL = {Compos. Math.},
  FJOURNAL = {Compositio Mathematica},
    VOLUME = {142},
      YEAR = {2006},
    NUMBER = {6},
     PAGES = {1479--1498},
      ISSN = {0010-437X},
   MRCLASS = {14L30 (13A50 14C22 14M20)},
  MRNUMBER = {2278756},
MRREVIEWER = {Matthias Meulien},
       DOI = {10.1112/S0010437X06002284},
       URL = {https://doi-org.ezp.biblio.unitn.it/10.1112/S0010437X06002284},
}

\bib{C}{article}{
    AUTHOR = {Catalisano, M. V.},
     TITLE = {``{F}at'' points on a conic},
   JOURNAL = {Comm. Algebra},
  FJOURNAL = {Communications in Algebra},
    VOLUME = {19},
      YEAR = {1991},
    NUMBER = {8},
     PAGES = {2153--2168},
      ISSN = {0092-7872},
   MRCLASS = {14C20},
  MRNUMBER = {1123117},
MRREVIEWER = {A. Hirschowitz},
       DOI = {10.1080/00927879108824252},
       URL = {https://doi.org/10.1080/00927879108824252},
}

\bib{CEG}{article}{
    AUTHOR = {Catalisano, M. V.}
    AUTHOR = {Ellia, P.}
    AUTHOR = {Gimigliano, A.}
     TITLE = {Fat points on rational normal curves},
   JOURNAL = {J. Algebra},
  FJOURNAL = {Journal of Algebra},
    VOLUME = {216},
      YEAR = {1999},
    NUMBER = {2},
     PAGES = {600--619},
      ISSN = {0021-8693},
   MRCLASS = {13D40 (14H45)},
  MRNUMBER = {1692981},
MRREVIEWER = {Juan C. Migliore},
       DOI = {10.1006/jabr.1998.7761},
       URL = {https://doi.org/10.1006/jabr.1998.7761},
}

\bib{CTV}{article}{
    AUTHOR = {Catalisano, M. V.}
    AUTHOR = {Trung, N. V.}
    AUTHOR = {Valla, G.}
     TITLE = {A sharp bound for the regularity index of fat points in
              general position},
   JOURNAL = {Proc. Amer. Math. Soc.},
  FJOURNAL = {Proceedings of the American Mathematical Society},
    VOLUME = {118},
      YEAR = {1993},
    NUMBER = {3},
     PAGES = {717--724},
      ISSN = {0002-9939},
   MRCLASS = {13H15 (14C25)},
  MRNUMBER = {1146859},
MRREVIEWER = {P. Schenzel},
       DOI = {10.2307/2160111},
       URL = {https://doi.org/10.2307/2160111},
}

\bib{Ciliberto}{article}{
    AUTHOR = {Ciliberto, C.},
     TITLE = {Geometric aspects of polynomial interpolation in more
              variables and of {W}aring's problem},
 BOOKTITLE = {European {C}ongress of {M}athematics, {V}ol. {I} ({B}arcelona,
              2000)},
    SERIES = {Progr. Math.},
    VOLUME = {201},
     PAGES = {289--316},
 PUBLISHER = {Birkh\"{a}user, Basel},
      YEAR = {2001},
   MRCLASS = {14J99 (14F17)},
  MRNUMBER = {1905326},
MRREVIEWER = {James E. Alexander},
}

\bib{DPvanishing}{article}{
    AUTHOR = {Dumitrescu, O.},
    AUTHOR = {Postinghel, E.},
    TITLE = {Vanishing theorems for linearly obstructed divisors},
   JOURNAL = {J. Algebra},
  FJOURNAL = {Journal of Algebra},
    VOLUME = {477},
      YEAR = {2017},
     PAGES = {312--359},
      ISSN = {0021-8693},
   MRCLASS = {14C20 (14C17 14J70 14N05)},
  MRNUMBER = {3614155},
MRREVIEWER = {Jorge Caravantes},
       DOI = {10.1016/j.jalgebra.2017.01.006},
       URL = {https://doi-org.ezp.biblio.unitn.it/10.1016/j.jalgebra.2017.01.006},
}
\bib{G}{article}{
    AUTHOR = {Gimigliano, A.},
     TITLE = {On linear systems of plane curves},
JOURNAL = {Ph.D. Thesis, Queen?s University, Canada},
      YEAR = {1987},
}

\bib{HM}{incollection}{
    AUTHOR = {H\`a, H. T.}
    AUTHOR = {Mantero, P},
     TITLE = {The Alexander-Hirschowitz theorem and related problems},
 BOOKTITLE = {Commutative algebra},
     PAGES = {373--427},
 PUBLISHER = {Springer, Cham},
      YEAR = {2021},
   MRCLASS = {13D40},
  MRNUMBER = {4394415},
       DOI = {10.1007/978-3-030-89694-2\_12},
       URL = {https://doi.org/10.1007/978-3-030-89694-2_12},
}

\bib{Har2}{article}{
    AUTHOR = {Harbourne, B.},
     TITLE = {Anticanonical rational surfaces},
   JOURNAL = {Trans. Amer. Math. Soc.},
  FJOURNAL = {Transactions of the American Mathematical Society},
    VOLUME = {349},
      YEAR = {1997},
    NUMBER = {3},
     PAGES = {1191--1208},
      ISSN = {0002-9947},
   MRCLASS = {14C20 (14J26 14M20 14N05)},
  MRNUMBER = {1373636},
MRREVIEWER = {Luis Giraldo},
       DOI = {10.1090/S0002-9947-97-01722-4},
       URL = {https://doi.org/10.1090/S0002-9947-97-01722-4},
}

\bib{Harbourne3}{article}{
    AUTHOR = {Harbourne, B.},
     TITLE = {The geometry of rational surfaces and {H}ilbert functions of
              points in the plane},
 BOOKTITLE = {Proceedings of the 1984 {V}ancouver conference in algebraic
              geometry},
    SERIES = {CMS Conf. Proc.},
    VOLUME = {6},
     PAGES = {95--111},
 PUBLISHER = {Amer. Math. Soc., Providence, RI},
      YEAR = {1986},
   MRCLASS = {14J26 (14C25)},
  MRNUMBER = {846019},
MRREVIEWER = {C. A. M. Peters},
}

\bib{Hir2}{article}{
    AUTHOR = {Hirschowitz, A.},
     TITLE = {Une conjecture pour la cohomologie des diviseurs sur les
              surfaces rationnelles g\'{e}n\'{e}riques},
   JOURNAL = {J. Reine Angew. Math.},
  FJOURNAL = {Journal f\"{u}r die Reine und Angewandte Mathematik. [Crelle's
              Journal]},
    VOLUME = {397},
      YEAR = {1989},
     PAGES = {208--213},
      ISSN = {0075-4102},
   MRCLASS = {14J26 (14C20)},
  MRNUMBER = {993223},
MRREVIEWER = {L. B\u{a}descu},
       DOI = {10.1515/crll.1989.397.208},
       URL = {https://doi-org.ezp.biblio.unitn.it/10.1515/crll.1989.397.208},
}

\bib{LU}{article}{
   author={Laface, A.},
   author={Ugaglia, L.},
   title={On a class of special linear systems of $\Bbb P^3$},
   journal={Trans. Amer. Math. Soc.},
   volume={358},
   date={2006},
   number={12},
   pages={5485--5500},
   issn={0002-9947},
   review={\MR{2238923}},
   doi={10.1090/S0002-9947-06-03891-8},
}

 \bib{Mukai05}{article}{
    AUTHOR = {Mukai, S.},
     TITLE = {Finite generation of the Nagata invariant rings in A-D-E
cases},
   JOURNAL = {RIMS Preprint no.1502, Kyoto},
   YEAR ={2005},
 }
 
\bib{Po}{article}{
    AUTHOR = {Postinghel, E.},
     TITLE = {A new proof of the {A}lexander-{H}irschowitz interpolation
              theorem},
   JOURNAL = {Ann. Mat. Pura Appl. (4)},
  FJOURNAL = {Annali di Matematica Pura ed Applicata. Series IV},
    VOLUME = {191},
      YEAR = {2012},
    NUMBER = {1},
     PAGES = {77--94},
      ISSN = {0373-3114},
   MRCLASS = {14N05 (14C20 14D06 32E30)},
  MRNUMBER = {2886162},
MRREVIEWER = {Fyodor L. Zak},
       DOI = {10.1007/s10231-010-0175-9},
       URL = {https://doi-org.ezp.biblio.unitn.it/10.1007/s10231-010-0175-9},
}

\bib{S}{article}{
    AUTHOR = {Segre, B.},
     TITLE = {Alcune questioni su insiemi finiti di punti in geometria
              algebrica },
 BOOKTITLE = {Atti {C}onvegno {I}nternaz. {G}eometria {A}lgebrica ({T}orino,
              1961)},
     PAGES = {15--33},
 PUBLISHER = {Rattero, Turin},
      YEAR = {1962},
   MRCLASS = {14.18 (50.00)},
  MRNUMBER = {0146714},
MRREVIEWER = {J. A. Todd},
}

\end{biblist}
\end{bibdiv}

\end{document}